\documentclass[12pt]{article}
\usepackage{authblk}
\usepackage[english]{babel}
\usepackage[reqno]{amsmath}
\usepackage{amssymb}
\usepackage[T1]{fontenc}
\usepackage{lmodern}
\usepackage{amsthm}
\usepackage{hyperref}
\usepackage{graphics, pict2e}
\usepackage{enumitem}
\usepackage{fancyhdr}
\usepackage{xcolor}
\usepackage{bbm}
\usepackage[margin=0.99in]{geometry}
\usepackage{mathtools}
\mathtoolsset{showonlyrefs=true}
\numberwithin{equation}{section}

\newtheorem{proposition}{Proposition}[section]
\newtheorem{theorem}{Theorem}[section]
\newtheorem{corollary}{Corollary}[section]
\newtheorem{lemma}{Lemma}[section]
\theoremstyle{definition}
\newtheorem{remark}{Remark}[section]

\newtheorem{example}{Example}[section]

\makeatletter
\newcommand{\bigcomp}{%
  \DOTSB
  \mathop{\vphantom{\sum}\mathpalette\bigcomp@\relax}%
  \slimits@
}
\newcommand{\bigcomp@}[2]{%
  \begingroup\m@th
  \sbox\z@{$#1\sum$}%
  \setlength{\unitlength}{0.9\dimexpr\ht\z@+\dp\z@}%
  \vcenter{\hbox{%
    \begin{picture}(1,1)
    \bigcomp@linethickness{#1}
    \put(0.5,0.5){\circle{1}}
    \end{picture}%
  }}%
  \endgroup
}
\newcommand{\bigcomp@linethickness}[1]{%
  \linethickness{%
      \ifx#1\displaystyle 2\fontdimen8\textfont\else
      \ifx#1\textstyle 1.65\fontdimen8\textfont\else
      \ifx#1\scriptstyle 1.65\fontdimen8\scriptfont\else
      1.65\fontdimen8\scriptscriptfont\fi\fi\fi 3
  }%
}

\newcommand{\n}{\mathbb{N}}
  
\renewcommand {\>}{\right\rangle}  
\newcommand {\norma}[1]{\left\|#1\right\|}
\newcommand {\onorma}[1]{\left\|#1\right\|_{\mathrm{op}}} 

\newcommand{\ew}{\mathbb{E}}
\newcommand{\pr}{\mathbb{P}}
\newcommand{\bmathcal}[1]{\bar{\mathcal{#1}}}

\newcommand{\A}[2]{\mathbb{A}_{#1}(#2)}
\newcommand{\wA}{\widehat{\mathbb{A}}}

\newcommand{\emails}[1]{%
  \begingroup
  \renewcommand\thefootnote{}%
  \footnotetext{#1}%
  \endgroup
}

\title{On the Existence of Geometrically Attracting Measures for Iterated Function Systems with Varying Sets of Transformations}

\author[1]{Dawid Czapla\footnote{Corresponding author}}
\author[2]{Rafa\l{} Kapica}
\author[1]{Maciej \'Sl\k{e}czka}

\affil[1]{Institute of Mathematics, University of Silesia in Katowice, Bankowa 14, Katowice 40-007, Poland}

\affil[2]{Faculty of Applied Mathematics, AGH University of Science and Technology, A. Mickiewicza 30, Krak\'ow 30-059, Poland}

\date{}
\begin{document}
\maketitle
\emails{\emph{Email addresses:} \texttt{dawid.czapla@us.edu.pl} (D. Czapla),\, \texttt{rafal.kapica@agh.edu.pl} (R. Kapica),\\ \texttt{maciej.sleczka@us.edu.pl} (M. \'Sl\k{e}czka)}

\vspace{-1cm}
\begin{abstract}
In this paper, we focus on the asymptotic behavior of random iterated functions systems, in which both the family of transformations and the distribution of selecting them vary at each step. We consider two types of dynamics: the time-inhomogeneous Markov chain arising from forward iterates and the non-Markovian process generated by backward iterates. For both settings, we provide certain criteria for the existence of a~probability measure that is geometrically attracting in the bounded Lipschitz distance, independently of the initial distribution. Finally, we illustrate our results through applications to specific models involving affine transformations.
\end{abstract}
{\small
\noindent
\textbf {MSC 2020:} Primary: 37H12, 60J05; Secondary: 60B10, 37H30 \\
\textbf{Keywords:} Iterated function system; Time-dependent random iteration, Time-inhomogeneous Markov chain; Forward iterates, Backward iterates;  Attracting measure; Bounded Lipschitz distance; Geometric rate of convergence, Stationary distribution. \\
}

\section*{Introduction}
From the probabilistic perspective, an \emph{iterated function system} (IFS) is usually understood as a framework for modelling a discrete-time Markov process through the random selection and successive composition of transformations within some fixed family. 

Early precursors of probabilistic IFSs can be already found in the theory of \emph{systems with complete connections}, introduced by Mihoc and Onicescu in the 1930s (\cite{b:MihocOnicescu1935}), as well as in mathematical models of learning (see, e.g., \cite{b:Karlin1953, b:Norman1968}). A major contribution, however, was made by Dubins and Freedman, who studied the existence and uniqueness of stationary distributions for Markov chains generated by the successive random application of transformations (\cite{b:DubinsFreedman1966}), thereby establishing one of the principal probabilistic foundations of what is now called the theory of iterated random functions. Independently, the geometric foundations of the subject were laid by Hutchinson, who proved that a finite family of contractions on a complete metric space determines a unique compact self-similar invariant set (called an \emph{attractor}), and, when the transformations are assigned fixed selection probabilities, a unique invariant probability measure supported on that set (\cite{b:Hutchinson1981}). The notion of an IFS was later introduced and popularized by Barnsley and Demko through their systematic approach to the construction and approximation of fractals (\cite{b:BarnsleyDemko1985}). Soon afterwards, the framework was substantially extended by allowing the probabilities of selecting the maps to depend on the current state of the process (\cite{b:BarnsleyDemkoEltonGeronimo1988}). This development gave rise to a general Markov-operator approach, in which questions concerning invariant measures, asymptotic stability (i.e., existence of a unique, weakly attractive invariant distribution), and rates of convergence to equilibrium became central. Incidentally, the concept of asymptotic stability was used to generalize classical self-similar fractals to the so-called \emph{semifractals}, introduced by Lasota and Myjak in \cite{b:LasotaMyjak1998}. Results concerning the above questions were first established mainly for compact or locally compact state spaces (see, e.g., \cite{b:Lasota1995, b:LasotaYorke1994, b:Stenflo2002, b:Werner2005}), and subsequently extended to Polish spaces (see, e.g., \cite{b:KapicaSleczka2020,b:Stenflo2001, b:Szarek2003IM, b:Szarek2003}). 

In the modern approach, given a Polish metric space~$X$ (viewed as a state space) and a~measurable space $(\Theta,\mathcal{A})$, an~IFS is typically defined as a~system consisting of a collection \hbox{$\{\vartheta_x:\; x\in X\}$} of probability measures on $\mathcal{A}$ and a family $\{S_{\theta}:\; \theta\in\Theta\}$ of transformations of $X$ into itself (usually assumed to be continuous), such that $(x,\theta)\mapsto S_{\theta}(x)$ is measurable. The dynamics conventionally associated with this framework coincide with the movement obtained from the so-called \emph{forward iterates}. More precisely, they can be described by the random process $\mathbb{X}:=\{X_n\}_{n\in\n_0}$ (with $\mathbb{N}_0:=\mathbb{N}\cup\{0\}$) defined as
$$X_n:=\left(S_{\eta_n}\circ\ldots\circ S_{\eta_1}\right)(X_0)=S_{\eta_n}(X_{n-1})\quad\text{for}\quad n\in\n,$$
where $\{\eta_n\}_{n\in\n}$ is a sequence of $\Theta$-valued random variables such that, for any $x\in X$ and~$n\in\n$, the conditional distribution of $\eta_n$ given $X_{n-1}=x$ is $\vartheta_x$. The process $\mathbb{X}$ naturally constitutes a time-homogeneous Markov chain evolving on $X$. Crucially, many Markov chains occurring in the natural sciences can be represented in this manner. Among numerous examples, one could mention models for the intracellular biochemistry of a generic cell undergoing mitosis (\cite{b:LasotaMackey1999, b:Wojewodka2013}) and the growth of blood cell population (\cite{b:Wazewska}; cf. also \cite[\S7.3]{b:Szarek2003} for the relevant stability result).

Beyond the above, one may also be concerned with the dynamics given by the \emph{backward iterates}, represented by the reverse process $\mathbb{Y}:=\{Y_n\}_{n\in\n_0}$ of the form
$$Y_n:=\left(S_{\eta_1}\circ\ldots\circ S_{\eta_n}\right)(Y_0)\quad\text{for}\quad n\in\n,$$
which, in general, is not Markovian. The rationale for considering such dynamics is twofold. On the one hand, the process $\mathbb{Y}$ is itself applicable to modeling specific phenomena, such as perpetuities (see e.g. \cite{b:AlsmeyerIksanovRosler2009,b:EmbrechtsKluppelbergMikosch1997, b:GoldieMaller2000, b:Iksanov2016}), where the $S_{\theta}$ are affine mappings on~$\mathbb{R}^d$. On the other hand, setting $Y_0=X_0$, it becomes an effective tool for investigating the chain~$\mathbb{X}$ when the transformations are sampled independently of the current state. More precisely, if $\vartheta_x=\vartheta_y$ for any $x,y\in X$, which in practice amounts to assuming that $\eta_1,\eta_2,\ldots$ are mutually independent and identically distributed, with $\{\eta_n\}_{n\in\n}$ being independent of $X_0$, then $X_n$ and~$Y_n$ share the same distribution for every~$n$. This observation is particularly useful for studying the asymptotic behavior of~$\mathbb{X}$, since the convergence in law of $\{Y_n\}_{n\in\n_0}$ is often easier to establish than that of $\{X_n\}_{n\in\n_0}$. For instance, it plays a key role in the proof of \hbox{\cite[Theorem 5.1]{b:DiaconisFreedman1999}}, which -- under a \emph{contractivity-on-average}-type condition and certain additional assumptions --  ensures that the law of the chain $\mathbb{X}$ converges at an exponential rate in the L\'evy--Prokhorov metric to its unique stationary distribution, regardless of the initial state.

In this paper, we study a generalization of classical IFSs (as defined above) in which transformations are selected according to a fixed, state-independent distribution. Specifically, inspired by  \cite{b:Stenflo1998} and \cite{b:Mendivil2015}, our setting allows both the family of admissible transformations and the probability law governing their selection to vary with the iteration step. Thus, rather than working with a single IFS $((\Theta, \mathcal{A}, \vartheta),\, \{S_{\theta}:\; \theta\in\Theta\})$, we consider a sequence $((\Theta_n, \mathcal{A}_n, \vartheta_n),\, \{S_{\theta}^{(n)}:\; \theta\in\Theta_n\})_{n\in\n}$ of such systems, called a \emph{time-dependent IFS}. Within this framework, the forward and backward dynamics are represented by the processes~\hbox{$\Phi=\{\Phi_n\}_{n\in\n_0}$} and $\Psi=\{\Psi_n\}_{n\in\n_0}$, respectively, whose $n$-th terms are defined analogously to $X_n$ and~$Y_n$, respectively, except that $S_{\eta_k}$ is replaced by the step-dependent transformation $S_{\eta_k}^{(k)}$ for each $k\in\{1,\ldots,n\}$. The process $\Phi$ is therefore a time-inhomogeneous Markov chain, which makes the analysis of its asymptotic behavior considerably more challenging, since most of techniques developed for homogeneous chains are no longer applicable. In particular, despite the transformations being sampled independently of the current state, the method based on the asymptotic behavior of the reversed process~$\Psi$ fails either. 


Our main goal is to identify tractable conditions under which, for any initial distribution with a finite first moment, the law of the chain~$\Phi$ converges at a geometric rate in the \emph{bounded Lipschitz distance} (also referred to as the Fortet--Mourier or Dudley metric; see \hbox{\cite{b:Dudley1966, b:FortetMourier1953, b:Szarek2003}}) to a~Borel probability measure, and to carry out a~similar analysis for the process~$\Psi$. These objectives are addressed in Theorems \ref{thm:main} and \ref{thm:main2}, respectively, and are supplemented by Propositions \ref{prop:finite_moment} and~\ref{cor:finite_moment}, which introduce additional assumptions ensuring that the limiting measures have finite first moments. Notably, the most recent general results of this kind for classical IFSs, including systems with state-dependent selection distributions, appear to be \hbox{\cite[Proposition~3.1]{b:KapicaSleczka2020} and \cite[Theorem 1]{b:Stenflo2001}}.

As is well known, the bounded Lipschitz distance metrizes the topology of weak convergence on the space of Borel probability measures (in fact, it is equivalent to the Prokhorov metric). The aforementioned results therefore imply, in particular, that the laws of $\Phi$ and~$\Psi$ converge weakly, independently of the initial distribution. Nevertheless, even when the transition operators of~$\Phi$ are Feller -- as is the case under our assumptions -- one should not generally expect the limiting distributions of $\Phi$ or~$\Psi$ to be stationary (see Remark~\ref{rem:1} and Example~\ref{rem:2.5}), in contrast to the time-homogeneous setting. Determining their invariance constitutes a separate and complex problem that lies beyond the scope of the present study.

Our approach basically rests on two core assumptions. The first expresses a form of contractivity on average in the logarithmic sense. Specifically, each random map $x\mapsto S_{\eta_n}^{(n)}(x)$ is required to be almost surely Lipschitz continuous, and the corresponding random Lipschitz constants~$L_n(\eta_n)$ are assumed to be identically distributed with a negative expected logarithm. The second, in turn, provides a uniform integrability bound on the one-step displacements that enter the estimates for successive compositions. As might be expected, this control takes a much more involved form in the case of the forward dynamics of $\Phi$;  namely, it concerns conjugated displacements, relying on the additional requirement that~$S_{\theta}^{(n)}$ are invertible. For the backward process~$\Psi$, direct one-step displacements suffice, and no invertibility is required. The proofs of our main results rely primarily on \hbox{\cite[Lemma 5.2]{b:DiaconisFreedman1999}} by Diaconis and Freedman (stated here as Lemma~\ref{lem:diaconis}) and Lemma~\ref{lem:composition}. The former ensures that successive partial products of the random Lipschitz constants (with a negative mean logarithm) decay geometrically outside events of exponentially small probability, whereas the latter bounds the distance between consecutive compositions in terms of the relevant Lipschitz factors and displacement terms.

When comparing Theorem \ref{thm:main} with \cite[Theorem 2.1]{b:Stenflo1998} of Stenflo, it should be stressed that although their conclusions largely overlap, they accommodate time-inhomogeneity in fundamentally different ways. Stenflo treats the time-dependent system as an asymptotically vanishing perturbation of a fixed, contractive-on-average IFS whose associated Markov chain is exponentially ergodic. At each step, the time-dependent and respective reference maps are coupled so that their expected discrepancy vanishes uniformly over the state space and satisfies a~weighted summability condition that determines the convergence rate. By contrast, our hypotheses refer directly to step-dependent IFSs and do not presuppose the existence of any limiting time-homogeneous system. This treatment is conceptually closer to that of Mendivil in \cite{b:Mendivil2015}, who studies time-dependent IFSs composed of finitely many non-expansive transformations that converge uniformly to the identity as their Lipschitz constants approach one, with a focus on certain affine maps on~$\mathbb{R}^{d}$. Notably, while our results are not intended to generalize those in \cite{b:Mendivil2015}, the framework developed here is substantially broader, as it permits arbitrary, potentially uncountable families of not necessarily affine transformations.

In the context of affine transformations, another contribution of this work is the adaptation of Theorems~\ref{thm:main} and \ref{thm:main2} to the case where $S_{\theta }^{(n)}$ are maps of this kind on a Banach space. The results obtained by specializing our assumptions to that scenario are stated in Propositions~\ref{prop:affine_general} and \ref{prop:affine_general2}, which concern the processes $\Phi$ and $\Psi$, respectively. Moreover, in Example~\ref{example:1}, we introduce a fairly broad subclass of affine transformations to which Proposition~\ref{prop:affine_general} can be applied by verifying the convergence of certain explicitly defined numerical series. It is noteworthy here that affine IFSs are widely used in the theory of stochastic recurrence equations (see~\cite{b:BuraczewskiDamekMikosch2016, b:Vervaat1979}), which, in turn, find applications in fields such as finance and insurance (including the study of the aforementioned perpetuities \cite{b:GoldieMaller2000}), as well as telecommunications, physics (e.g., nuclear technology), biology, and time-series analysis.

The outline of the paper is as follows. Section~\ref{sec:1} introduces a general framework for time-inhomogeneous Markov chains, including the relevant notation and basic concepts concerning Markov operators, as well as a general criterion for the finiteness of the first moment of an attracting measure (Proposition~\ref{prop:m_11}). Sections~\ref{sec:2} and~\ref{sec:3} contain formal descriptions of the models arising in our setting from the forward and backward iterates, respectively, and state the corresponding main results  (Theorems~\ref{thm:main} and~\ref{thm:main2}). In addition, Section~\ref{sec:2} features an example demonstrating that the geometrically attracting probability measure yielded by Theorem \ref{thm:main} need not be invariant. Section~\ref{sec:4} is devoted to adapting our general results to systems of affine transformations (Propositions~\ref{prop:affine_general} and~\ref{prop:affine_general2}) and provides several representative examples. The proofs of the main theorems, along with all necessary auxiliary results, are presented in Section~\ref{sec:5}. Finally, \hyperref[sec:appendix]{Appendix} offers a~detailed proof of Lemma~\ref{lem:diaconis}, filling in certain details omitted in \cite{b:DiaconisFreedman1999}.


\section{Preliminaries} \label{sec:1}
Let us begin with introducing a~piece of notation. Given a~metric space $(X,\rho)$, we shall write $\mathcal{B}(X)$ for its Borel $\sigma$-field. The symbol $B_b(X)$ will stand for the space of all bounded Borel measurable functions from $X$ to $\mathbb{R}$, while $C_b(X)$ and $L_{b,1}(X)$ will denote its subsets consisting of all continuous functions and all Lipschitz continuous functions with the minimal Lipschitz constant $\leq 1$ that take values in $[-1,1]$, respectively. Further, let $\mathcal{M}_1(X)$ be the family of all Borel probability measures on $X$, and let $\mathcal{M}_{1,1}(X)$ denote the subset of $\mathcal{M}_1(X)$ consisting of all measures with finite first moment, i.e., those $\mu\in\mathcal{M}_1(X)$ for which $\int_X \rho(x,x_0)\,\mu(dx)<\infty$ with some (and thus with all) $x_0\in X$. For notational brevity, given any finite (signed) Borel measure $\mu$ on~$X$ and any Borel measurable function $f:X\to\mathbb{R}$, we will write $\<f, \mu\>$ for $\int_X f\, d\mu$, whenever the integral makes sense. The indicator function of a~subset $A$ of any given set (depending on the context) will be denoted by $\mathbbm{1}_A$, and the Dirac measure at a~point $x$ will be labeled as $\delta_x$. Finally, we put $\mathbb{R}_+:=[0,\infty)$ and $\mathbb{N}_0:=\mathbb{N}\cup\{0\}$.

Let us recall that a~sequence $\{\mu_n\}_{n\in\n}$ of finite Borel measures on $X$ is said to be \emph{weakly convergent} to a~measure $\mu$ (which is written as $\mu\stackrel{w}{\to}\mu$) if $\lim_{n\to\infty} \<f,\mu_n\>=\<f,\mu\>$ for all $f\in C_b(X)$. It is well known (see \cite[Theorems 6 and 8]{b:Dudley1966}) that the topology on $\mathcal{M}_1(X)$ determined by the weak convergence of measures coincides with that induced by the aforementioned \emph{bounded Lipschitz distance}, provided that $X$ is a~Polish space. This distance can be defined~as
$$d_{BL}(\mu, \nu):=\sup_{f\in L_{b,1}(X)}|\<f,\mu-\nu\>|\quad\text{for all}\quad \mu,\nu\in\mathcal{M}_1(X).$$
Notably, if $X$ is Polish, then the space $(\mathcal{M}_1(X), d_{BL})$ is complete (see~\hbox{\cite[Theorems 9]{b:Dudley1966}}).

We shall now briefly review certain basic concepts related to Markov operators and time-inhomogeneous Markov chains. First of all, by a~stochastic kernel (on $X$) we mean a~function $P:X\times \mathcal{B}(X)\to[0,1]$ such that $X\ni x\mapsto P(x,A)$ is Borel measurable for each $A\in\mathcal{B}(X)$, and $\mathcal{B}(X)\ni A \mapsto P(x,A)$ is a~probability measure for every $x\in X$. The composition $PQ$ of any two stochastic kernels $P$ and $Q$ on $X$ is defined, as usual, by
\begin{equation}
\label{e:chapman}
PQ(x,A):=\int_X Q(y, A)\,P(x,dy)\quad\text{for}\quad x\in X,\;A\in\mathcal{B}(X).
\end{equation}
Consequently, the iterates of a~kernel $P$ are of the form
$$P^1:=P,\quad P^{n+1}(x,A)=\int_X P^n(y,A)\,P(x,dy)\quad \text{for}\quad  x\in X,\;A\in\mathcal{B}(X),\;n\in\n.$$

Given a~stochastic kernel $P$ on $X$, we will consider two associated operators (also denoted~$P$): one, $(\cdot)P$, acting from $\mathcal{M}_1(X)$ into itself, and the other, $P(\cdot)$, acting from $B_b(X)$ into itself, defined~by
$$
\mu P(A):=\int_X P(x,A)\,\mu(dx)\quad\text{for}\quad \mu\in\mathcal{M}_1(X),\;A\in\mathcal{B}(X),$$
$$
Pf(x):=\int_X f(y)\,P(x,dy)\quad\text{for}\quad f\in B_b(X),\; x\in X.
$$
The first of them will be referred to as a~\emph{Markov operator}, and the second as its dual. These two operators are linked by the standard duality relation
$$\<f,\mu P\>=\<Pf,\mu\>\quad\text{for any}\quad f\in B_b(X),\;\mu\in\mathcal{M}_1(X).$$
Naturally, $P(\cdot)$ can also be viewed as an operator defined on the larger class of Borel functions that are only bounded from below. For such functions $f$, the above relationship remains valid as well, but one should keep in mind that $Pf$ may take $\infty$ as a~value in this case. We mention this because, is at some point the paper, we will write $PV$ for $V:=\rho(x_0,\cdot)$.

A stochastic kernel $P$ (or the induced Markov operator) is called \emph{Feller} if $Pf\in C_b(X)$ for every $f\in C_b(X)$. It is easy to check that if $P$ is Feller, then the corresponding Markov operator on $\mathcal{M}_1(X)$ is continuous in the topology of weak convergence of measures. Let us also recall that a~measure $\mu_*\in\mathcal{M}_1(X)$ is said to be \emph{invariant} (or \emph{stationary}) for a~Markov operator $P$ whenever $\mu_* P=\mu_*$.

Now, let $\{P_n\}_{n\in\n}$ be a~sequence of stochastic kernels on $X$, and let \hbox{$\mu\in\mathcal{M}_1(X)$}. By a~Markov chain with initial distribution $\mu$ and transition probabilities $P_n$, $n\in\n$, we mean a~sequence \hbox{$\Phi^{\mu}:=\{\Phi_n^{\mu}\}_{n\in\mathbb{N}_0}$} (for~simplicity, sometimes written without the superscript~$\mu$) of $X$-valued random variables on some probability space $(\Omega,\mathcal{F},\pr)$ such that $\Phi_0^{\mu}\sim \mu$ (i.e., $\mu$ is the distribution of $\Phi_0^{\mu}$) and, for any $A\in\mathcal{B}(X)$ and $n\in\n$,
$$\pr(\Phi_n\in A\,|\,\Phi_0,\ldots,\Phi_{n-1})=\pr(\Phi_n\in A\,|\,\Phi_{n-1})=P_n(\Phi_{n-1},\, A)\;\;\text{a.s.}
$$
In this setting, $P_n$ is referred to as the \emph{one-step transition kernel} of $\Phi$ at time $n$. If $P_n=P_1$ for all $n\in\mathbb{N}$, then $\Phi$ is called \emph{time-homogenous}.

For a~Markov chain $\Phi$ as above, the $n$-step transition probability from a~state $x\in X$ at time $k$ to a~set $A\in\mathcal{B}(X)$ takes the form
$$\pr(\Phi_{k+n}\in A\,|\,\Phi_k=x)=P_{k+1}\ldots P_{k+n}(x,A),$$
where the right hand-side denotes the composition of kernels in the sense of \eqref{e:chapman}. Moreover, letting $\mu_n$ denote the distribution of $\Phi_n$ for every $n\in\mathbb{N}_0$ (so that $\mu_0=\mu$), we see that
\begin{align*}
\mu_n(A)&=\pr(\Phi_n\in A)=\ew\left[\pr(\Phi_n\in A\,|\,\Phi_{n-1}) \right]=\ew[P_n(\Phi_{n-1},\,A)]\\
&=\int_X P_n(x,A)\,\mu_{n-1}(dx)=\mu_{n-1} P_n(A) \quad\text{for all}\quad A\in\mathcal{B}(X),\; n\in\mathbb{N},
\end{align*}
which shows that $\mu_{k+n} =\mu_k P_{k+1}\ldots P_{k+n}$ for all $k\in\mathbb{N}_0$ and $n\in\mathbb{N}$. Hence, putting 
\begin{equation}
\label{e:composition}
P^{(n)}:=P_1\ldots P_n \quad\text{for}\quad n\in\mathbb{N},
\end{equation}
gives, in particular,
\begin{equation}
\label{e:trans_op}
\mu_n=\pr(\Phi_n^{\mu}\in \cdot)=\mu P^{(n)} \quad\text{for every}\quad n\in\mathbb{N}.
\end{equation}

In what follows, we will also deal with non-Markovian stochastic processes whose successive distributions are determined by acting the reversed compositions of Markov operators on the initial distribution.  Specifically, given a~sequence of stochastic kernels $\{P_n\}_{n\in\n}$ on~$X$ and $\mu\in\mathcal{M}_1(X)$, one can consider a~process $\Psi^{\mu}:=
\{\Psi_n^{\mu}\}_{n\in\n_0}$ (for simplicity, sometimes written without $\mu$)  of random variables with values in $X$ such that~$\Psi_0^{\mu}\sim\mu$, and
\begin{equation}\label{e:trans_op_rev}
\pr\left(\Psi_n^{\mu}\in \cdot\right)=\mu \bar{P}^{(n)}\quad\text{for every}\quad n\in\n,
\end{equation}
where 
\begin{equation}
\label{e:composition_rev}
\bar{P}^{(n)}:=P_n\ldots P_1 \quad\text{for}\quad n\in\mathbb{N}.
\end{equation}

In view \eqref{e:trans_op} (respectively, \eqref{e:trans_op_rev}), it is reasonable to call a~measure $\mu_*\in\mathcal{M}_1(X)$ a~\emph{stationary}
(or \emph{invariant}) \emph{distribution} for $\Phi$ (resp., $\Psi$) if \hbox{$\mu_* P^{(n)}=\mu_*$} (resp., \hbox{$\mu_* \bar{P}^{(n)}=\mu_*$}) for each $n\in\mathbb{N}$. In the case of $\Phi$ (though not necessarily of $\Psi$), this is plainly equivalent to requiring that $\mu_* P_n =\mu_*$ for all $n\in\mathbb{N}$. By saying that a~measure $\pi\in\mathcal{M}_1(X)$ is \emph{attracting} for $\Phi$ (resp., $\Psi)$ we mean, in turn, that 
\begin{equation}\label{def:attracting}
\mu P^{(n)}\stackrel{w}{\to} \pi\;\; \text{(resp.,}\;\;\mu \bar{P}^{(n)}\stackrel{w}{\to} \pi)\quad\text{for every}\quad \mu\in\mathcal{M}_1(X).
\end{equation}

\begin{remark}\label{rem:1}
It is clear that, for any Markov operator $P$ with the Feller property, the convergence $\mu P^n \stackrel{w}{\to}\mu_*$ for some $\mu,\mu_*\in\mathcal{M}_1(X)$ implies that $\mu_*$ is invariant for $P$. In particular, every attracting probability measure of a time-homogeneous Markov chain with a Feller transition law is stationary. This, however, need not be the case for inhomogeneous Markov chains or for processes governed by \eqref{e:trans_op_rev}. 

Indeed, suppose that $X$ contains at least two distinct points $u,v$, and consider the processes $\Phi$ and $\Psi$ whose laws are specified by \eqref{e:trans_op} and \eqref{e:trans_op_rev}, respectively, with one-step kernels defined as
$$P_1(x,\cdot):=\frac{1}{2}\left(\delta_x+\delta_u\right)\quad\text{and}\quad P_n(x,\cdot):=\frac{1}{2}\left(\delta_x+\delta_v\right)\quad\text{for}\quad x\in X,\; n\geq 2.$$
Such kernels are clearly Feller and induce the Markov operators of the form: $\mu P_1=(\mu+\delta_u)/2$ and $\mu P_n=(\mu+\delta_v)/2$ for $n\geq 2$ and $\mu\in\mathcal{M}_1(X)$. Moreover, a straightforward induction gives
$$\mu P_2^m=\frac{1}{2^m}\mu+\left(\sum_{k=1}^m \frac{1}{2^k}\right)\delta_v=\frac{1}{2^m}\mu+\left(1-\frac{1}{2^m}\right)\delta_v\quad\text{for all}\quad m\in\n,\; \mu\in\mathcal{M}_1(X).$$
Consequently, for any $n\in\n$ and $\mu\in\mathcal{M}_1(X)$,
$$\mu P^{(n)}=(\mu P_1)P_2^{n-1}=\frac{1}{2^n}(\mu+\delta_u)+\left(1-\frac{1}{2^{n-1}}\right)\delta_v,$$
$$\mu \bar{P}^{(n)}=(\mu P_2^{n-1})P_1=\frac{1}{2^n}\mu+\frac{1}{2}\left(1-\frac{1}{2^{n-1}}\right)\delta_v+\frac{1}{2}\delta_u,$$
which implies that $\mu P^{(n)}\to \delta_v=:\pi$ and $\mu \bar{P}^{(n)}\to (\delta_v+\delta_u)/2=:\bar{\pi}$ setwise (and thus weakly), as $n\to\infty$. This shows that the measures $\pi$ and $\bar{\pi}$ are attracting for $\Phi$ and $\Psi$, respectively. Nevertheless, they fail to be stationary for these processes, since
$$\pi P^{(1)}=\pi P_1=(\delta_v+\delta_u)/2\neq \pi \quad \text{and}\quad \bar{\pi} \bar{P}^{(1)}=\bar{\pi}P_1=\frac{1}{4}\delta_v+\frac{3}{4}\delta_u\neq \bar{\pi}.$$
Besides, it is evident that $\delta_u$ and $\delta_v$ are the unique invariant probability measures of $P_1$ and~$\bar{P}_1$, respectively.
\end{remark}

\begin{remark}
If $\Phi$ (resp., $\Psi$) admits a~stationary distribution $\mu_*$, and $\pi\in\mathcal{M}_1(X)$ is an attracting measure for this process, then $\pi=\mu_*$. Consequently $\pi$ is then the unique stationary distribution of $\Phi$ (resp., $\Psi$).
\end{remark}

\begin{remark}\label{rem:2}
A measure $\pi\in\mathcal{M}_1(X)$ with the property that $\delta_x P^{(n)}\stackrel{w}{\to} \pi$ (resp. $\delta_x \bar{P}^{(n)}\stackrel{w}{\to} \pi$) for every $x\in X$ is attracting for $\Phi$ (resp. $\Psi$) in the manner of \eqref{def:attracting}, provided that $X$ is Polish. To see this, fix an arbitrary $\mu\in\mathcal{M}_1(X)$ and observe that, for any $f\in L_{b,1}(X)$ and~$n\in\n$,
\begin{align}
\begin{split}
\label{e:attracting_dirac}
\left|\<f,\mu P^{(n)}\>-\<f,\pi\>\right|&=\left|\<P^{(n)}f-\<f,\pi\>,\mu\>\right|\leq \int_X \left|\<P^{(n)}f,\delta_x\>-\<f,\pi\>\right|\mu(dx)\\
&=\int_X \left|\<f,\delta_xP^{(n)}-\pi\>\right|\mu(dx)\leq \int_X d_{BL}(\delta_x P^{(n)},\pi)\mu(dx).
\end{split}
\end{align}
Hence, if $\lim_{n\to\infty} d_{BL}(\delta_x P^{(n)},\pi)=0$ for each $x\in X$, then using the Lebesgue dominated convergence theorem, we can conclude that $\lim_{n\to\infty} d_{BL}(\mu P^{(n)},\pi)=0$ as well. Exactly the same argument applies to $\bar{P}^{(n)}$ in the role of $P^{(n)}$.
\end{remark}

Let us now make one more observation. In general, neither the
convergence $\mu P^{(n)} \to \pi$ nor $\mu \bar{P}^{(n)} \to \pi$ 
for some $\mu\in\mathcal{M}_{1,1}(X)$ implies that $\pi\in \mathcal{M}_{1,1}(X)$, even if each~$(\cdot)P_n$ preserves the finiteness of first measures' moments. This implication does hold, however, for instance when $\{P_n\}_{n\in\n}$ satisfies a~uniform Lyapunov-type drift condition. More precisely, we have the following:

\begin{proposition}\label{prop:m_11}
Suppose that $\mu P^{(n)}\stackrel{w}{\to} \pi$ or $\mu \bar{P}^{(n)}\stackrel{w}{\to} \pi$ for certain $\mu,\pi\in\mathcal{M}_1(X)$, and that \hbox{$\<V,\mu\><\infty$} for some continuous function $V:X\to [0,\infty)$. Further, assume that there exist constants \hbox{$a\in [0,1)$} and $b\geq 0$ such that
\begin{equation}\label{e:lap}
P_n V(x)\leq aV(x)+b\quad\text{for all}\quad x\in X,\; n\in\n.
\end{equation}
Then $\<V,\pi\><\infty$. In particular, if $V=\rho(x_0,\cdot)$ (with some $x_0\in X$), then $\pi\in\mathcal{M}_{1,1}(X)$.
\end{proposition}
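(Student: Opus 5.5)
The approach is to iterate the drift inequality \eqref{e:lap} along the compositions and obtain a bound on $\<V,\mu P^{(n)}\>$ (resp. $\<V,\mu\bar{P}^{(n)}\>$) that is uniform in $n$. Lower semicontinuity of $\nu\mapsto\<V,\nu\>$ under weak convergence then passes this bound to $\pi$. The order of the kernels in the composition plays no role, because \eqref{e:lap} holds with the same constants $a,b$ for every $n$.

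\textbf{Step 1 (uniform moment bound).} Write $Q_k$ for the $k$-th kernel applied, so that $Q_k=P_k$ in the forward case and $Q_k=P_{n-k+1}$ in the reverse one. Set $\nu_0:=\mu$ and $\nu_k:=\nu_{k-1}Q_k$. By the duality relation for nonnegative Borel functions (which the paper explicitly allows, with values in $[0,\infty]$), together with \eqref{e:lap}, we get
$$\<V,\nu_k\>=\<Q_kV,\nu_{k-1}\>\leq a\<V,\nu_{k-1}\>+b.$$
Induction then yields
$$\<V,\nu_k\>\leq a^k\<V,\mu\>+b\sum_{j=0}^{k-1}a^j\leq \<V,\mu\>+\frac{b}{1-a}=:C<\infty.$$
In particular $\<V,\mu P^{(n)}\>\leq C$ and $\<V,\mu\bar{P}^{(n)}\>\leq C$ for all $n\in\n$. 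Finiteness at each step follows inductively from $\<V,\mu\><\infty$, so no $\infty-\infty$ issues arise.

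\textbf{Step 2 (passage to the limit).} For $m\in\n$, put $V_m:=\min(V,m)$. Each $V_m$ lies in $C_b(X)$ because $V$ is continuous and nonnegative. Denote by $\mu_n$ either $\mu P^{(n)}$ or $\mu\bar{P}^{(n)}$, according to which convergence is assumed. Weak convergence gives
$$\<V_m,\pi\>=\lim_{n\to\infty}\<V_m,\mu_n\>\leq\sup_n\<V,\mu_n\>\leq C.$$
Letting $m\to\infty$, the monotone convergence theorem gives $\<V,\pi\>\leq C<\infty$.

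\textbf{Step 3 (the special case).} For $V=\rho(x_0,\cdot)$, the function $V$ is continuous, nonnegative and satisfies $\<V,\pi\><\infty$, which is exactly the definition of $\pi\in\mathcal{M}_{1,1}(X)$.

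There is no serious obstacle here. The only point needing care is the justification of the duality relation for the unbounded function $V$. This can be handled by applying the bounded duality to $V_m$, or to $\min(P_kV,m)$, and then using monotone convergence, since $P_kV_m\uparrow P_kV$ pointwise.
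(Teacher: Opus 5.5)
Your proof is correct and is essentially the paper's argument. You iterate the drift condition to get a bound on $\langle V,\mu P^{(n)}\rangle$ (resp.\ $\langle V,\mu\bar{P}^{(n)}\rangle$) that is uniform in $n$, and then transfer it to $\pi$ via the truncations $\min\{V,m\}\in C_b(X)$, weak convergence, and monotone convergence. The differences are cosmetic. You induct on the measure side rather than proving $P^{(n)}V\leq a^nV+b\sum_{k=0}^{n-1}a^k$ for functions, and you bound $a^k\langle V,\mu\rangle$ by $\langle V,\mu\rangle$ instead of letting it vanish as $n\to\infty$, so you get $\langle V,\pi\rangle\leq\langle V,\mu\rangle+b/(1-a)$ rather than the paper's sharper $b/(1-a)$.
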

\begin{proof}
We will present the proof under the assumption that
$\mu P^{(n)} \xrightarrow{w} \pi$. If instead
\hbox{$\mu \bar{P}^{(n)} \xrightarrow{w} \pi$}, the argument is entirely analogous.

First observe that, for every $n\in\n$,
\begin{equation}\label{e:lap_ind}
P^{(n)} V(x) \leq a^n V(x) + b\sum_{k=0}^{n-1} a^k\quad\text{for all}\quad x\in X.
\end{equation}
To see this, we proceed by induction. For $n=1$, inequality \eqref{e:lap_ind} follows directly from \eqref{e:lap}. Assuming that \eqref{e:lap_ind} holds for some arbitrarily fixed $n\in\n$, by the use of \eqref{e:lap} and the inductive hypothesis, we obtain
\begin{align*}
P^{(n+1)}V=P^{(n)}P_{n+1} V\leq a P^{(n)}V+b\leq a \left(a^n V + b\sum_{k=0}^{n-1} a^k \right)+b
=a^{n+1} V+b\sum_{k=0}^n a^k,
\end{align*}
as claimed. It should be noted here that in the case of $\bar{P}^{(n)}$ we get $\bar{P}^{(n+1)}V=P_{n+1}\bar{P}^{(n)}V$; thus then one first applies the inductive hypothesis and then uses \eqref{e:lap}.

Now, for every $k\in\n$, define $V_k=\min\{V,k\}$. Clearly, $V_k\in C_b(X)$ for all $k\in\n$, and $V_k(x) \uparrow V(x)$ as $k\to \infty$ for every $x\in X$. Further, from \eqref{e:lap_ind} it follows that
\begin{align*}
\<V_k,\, \mu P^{(n)}\>&=\<P^{(n)}V_k,\, \mu\>\leq \<P^{(n)}V,\, \mu\>\leq a^n\<V,\mu\>+b\sum_{k=0}^{\infty} a^k\\
&=a^n\<V,\mu\>+\frac{b}{1-a}\quad\text{for all} \quad k,n\in\n,
\end{align*}
where $\<V,\mu\><\infty$ by the assumption. This, in turn, gives
\begin{align*}
\<V_k,\pi\>&\leq \left|\<V_k,\pi\>-\<V_k,\, \mu P^{(n)}\>\right| + \<V_k,\, \mu P^{(n)}\>\\
&\leq \left|\<V_k,\pi\>-\<V_k,\, \mu P^{(n)}\>\right|+a^n\<V,\mu\>+\frac{b}{1-a}\quad\text{for all}\quad k,n\in\n.
\end{align*}
Moreover, the convergence $\mu P^{(n)}\stackrel{w}{\to} \pi$ ensures that $\lim_{n\to\infty}\<V_k,\mu P^{(n)}\>=\<V_k,\pi\>$ for every $k\in\n$. Hence, passing to the limit as $n\to \infty$ in the latter inequality, we see that
$$\<V_k,\pi\>\leq \frac{b}{1-a}\quad\text{for all}\quad k\in\n.$$
Finally, applying the Lebesgue monotone convergence theorem gives $\<V,\pi\>\leq b/(1-a)$, which completes the proof.
\end{proof}

The remainder of this paper, within the framework of models outlined in the introduction, focuses on the situation where there exists a~measure $\pi\in\mathcal{M}_1(X)$ such that, for some $q\in (0,1)$ and $\mathcal{C}: \mathcal{M}_{1,1}(X)\to \mathbb{R}_+$,
$$
d_{BL}(\mu P^{(n)}, \pi)\leq C(\mu)q^n\quad\text{for any}\quad \mu\in\mathcal{M}_{1,1}(X),\;n\in\mathbb{N}.
$$
and likewise if $P^{(n)}$ is replaced by $\bar{P}^{(n)}$. A measure $\pi$ with this property will be called \emph{geometrically attracting} in the bounded Lipchitz distance. Of course, even a~geometrically attracting probability measure does not have to be a~stationary distribution either. For instance, despite not being invariant, the measures $\pi$ and $\bar{\pi}$ in Remark \ref{rem:1} are geometrically attracting with~$C(\mu)\equiv 4$.

\begin{remark}\label{rem:3}
Every Borel probability measure on $X$ that is geometrically attracting in $d_{BL}$ is also attracting in the sense of \eqref{def:attracting}, provided that $X$ is Polish. This follows directly from Remark \ref{rem:2}, since Dirac measures are members of~$\mathcal{M}_{1,1}(X)$.
\end{remark}

\section{Markov model given by the forward iterates}\label{sec:2}

Let $(X,\rho)$ be a~Polish metric space, and let $\{(\Theta_n, \mathcal{A}_n,\vartheta_n)\}_{n\in \n}$ be an arbitrary sequence of probability spaces. Further, for each $n\in\n$, consider a~family $\{S_{\theta}^{(n)}:\; \theta\in \Theta_n\}$ of transformations from $X$ into itself such that the map 
\begin{equation}\label{e:measurabilty}
X\times \Theta_n \ni (x,\theta) \mapsto S_{\theta}^{(n)}(x)\in X
\end{equation} 
is $\mathcal{B}(X)\otimes \mathcal{A}_n\, /\, \mathcal{B}(X)$-measurable. Given $\mu\in\mathcal{M}_1(X)$, we consider a~stochastic process \hbox{$\Phi^{\mu}:=\{\Phi_n^{\mu}\}_{n\in\n_0}$ } evolving on $X$ so that
\begin{equation}
\label{e:model_def}
\Phi_0^{\mu}\sim\mu,\quad \Phi_n^{\mu}=S_{\eta_n}^{(n)}(\Phi_{n-1}^{\mu})\;\;\text{a.s.}\quad\text{for every}\quad n\in\n,
\end{equation}
where $\{\eta_n\}_{n\in\n}$ is a~sequence of mutually independent random variables, independent of $\Phi_0^{\mu}$, such that $\eta_n$ takes values in $\Theta_n$ and has distribution $\vartheta_n$ for every $n\in\n$. Equivalently, the variables $\Phi_n^{\mu}$, $n\in\n$, can be defined as
\begin{equation}
\label{e:model_def_exp}
\Phi^{\mu}_n=\left(S_{\eta_n}^{(n)}\circ \ldots \circ S_{\eta_1}^{(1)}\right)\left(\Phi^{\mu}_0\right).
\end{equation}
For brevity, given any $n\in\mathbb{N}$ and $\theta_1\in \Theta_1,\ldots,\theta_n\in \Theta_n$, we further use the notation
$$\mathcal{S}_{\theta_1,\ldots,\theta_n}:=S_{\theta_n}^{(n)}\circ \ldots \circ S_{\theta_1}^{(1)},$$
with the convention that $\mathcal{S}_{\theta_1,\dots,\theta_0}$ represents the identity map on $X$.

Obviously, $\Phi^{\mu}$ enjoys the Markov property, and for any $x\in X$ and $A\in\mathcal{B}(X)$,
$$\pr\left(\Phi_n^{\mu}\in A\,|\,\Phi^{\mu}_{n-1}=x\right)=\pr\left(S_{\eta_n}^{(n)}(x)\in A\right)=\int_{\Theta_n} \mathbbm{1}_A\left(S_{\theta}^{(n)}(x)\right)\vartheta_n(d\theta)$$
Thus, $\Phi^{\mu}$ is a~(generally time-inhomogeneous) Markov chain with one-step transition kernels $P_n$, $n\in\n$, given by
\begin{equation}
\label{e:kernel_def}
P_n(x,A):=\int_{\Theta_n} \mathbbm{1}_A\left(S_{\theta}^{(n)}(x)\right)\vartheta_n(d\theta)\quad\text{for}\quad x\in X,\; A\in\mathcal{B}(X).
\end{equation}
Consequently, the associated Markov operators take the form
\begin{align}
\begin{split}
\label{e:op_def}
\mu P_n(A)&= \int_X \int_{\Theta_n} \mathbbm{1}_A\left(S_{\theta}^{(n)}(x)\right)\vartheta_n(d\theta)\,\mu(dx)\\
&=\int_{\Theta_n} \mu\left(\left( S_{\theta}^{(n)}\right)^{-1}(A) \right)\vartheta_n(d\theta)
\quad \text{for}\quad \mu\in\mathcal{M}_1(X),\;A\in\mathcal{B}(X),
\end{split}
\end{align}
and their dual action can be written as
\begin{equation}
\label{e:op_dual_def}
P_nf(x) = \int_{\Theta_n} f\left(S_{\theta}^{(n)}(x)\right)\vartheta_n(d\theta) \quad\text{for}\quad f\in B_b(X),\; x\in X.
\end{equation}

Throughout the remainder of this section, the stochastic kernels $P^{(n)}$, $n\in\mathbb{N}$, defined in~\eqref{e:composition}, are understood with $P_n$ given by \eqref{e:kernel_def}. Accordingly, $\{P^{(n)}\}_{n\in\n}$ describes the evolution of the distributions of the Markov chain specified by \eqref{e:model_def}.

\begin{remark}\label{rem:Feller}
The Markov operators $P_n$, $n\in\n$, given by \eqref{e:op_def}, are Feller whenever the transformations $S_{\theta}^{(n)}$, $\theta\in \Theta_n$, $n\in\n$, are continuous.
\end{remark}

Finally, for technical convenience, we shall assume that the chains $\Phi^x:=\Phi^{\delta_x}$, $x\in X$, are defined on the common probability space $(\Omega,\mathcal{F},\pr)$ of the form
\begin{equation}\label{df:ps}
\Omega=\prod_{n=1}^{\infty}\Theta_n,\quad \mathcal{F}=\bigotimes_{n=1}^{\infty}\mathcal{A}_n,\quad \pr=\bigotimes_{n=1}^{\infty}\vartheta_n
\end{equation}
via
$$
\Phi^{x}_0(\omega):=x,\quad \Phi^{x}_n(\omega):=S_{\eta_1(\omega),\ldots,\eta_n(\omega)}(x) \quad\text{for}\quad \omega\in\Omega,\;x\in X,\;n\in\mathbb{N},
$$
where $\{\eta_n\}_{n\in\n}$ is given by
\begin{equation}\label{df:eta}
\eta_n(\omega):=\theta_n\quad\text{for}\quad \omega=(\theta_1,\theta_2,\ldots)\in \Omega,\;n\in\mathbb{N}.
\end{equation}

We are now in a~position to formulate the main result of this section, which provides a~set of conditions guaranteeing the existence of a~geometrically attracting (in~$d_{BL}$) distribution for the Markov chain $\{\Phi_n^{\,\bullet}\}_{n\in\n_0}$, defined by \eqref{e:model_def}.

\begin{theorem}\label{thm:main}
Suppose that there exist measurable functions $L_n:\Theta_n\to (0,\infty)$, $n\in\n$, and \hbox{$M_n:\prod_{k=1}^n\Theta_k\to [0,\infty)$}, $n\geq n_0$, with some $n_0\in\n$, as well a~point $x_0\in X$, such~that the following conditions hold:

\begin{enumerate}[label=\textnormal{(A\arabic*)}, leftmargin=*]
\item\label{cnd:A1} For each $n\in\n$ and $\vartheta_n$-a.e. $\theta\in \Theta_n$, the transformation $S_{\theta}^{(n)}$ is Lipschitz continuous with the constant $L_n(\theta)$, i.e.,
$$\rho\left(S_{\theta}^{(n)}(x), S_{\theta}^{(n)}(y) \right)\leq L_n(\theta)\rho(x,y)\quad\text{for any}\quad x,y\in X;$$

\item\label{cnd:A2} For every $E\in\mathcal{B}((0,\infty))$, $\vartheta_n(L_n^{-1}(E))$ does not depend on $n$, and
\begin{equation}\label{eq:L_conditions}
R:=\int_{\Theta_1} L_1(\theta)\, \vartheta_1(d\theta)<\infty, \quad \int_{\Theta_1} \ln L_1(\theta)\, \vartheta_1(d\theta)<0;
\end{equation}

\item\label{cnd:A3} For every $n\geq n_0$ and $\bigotimes_{k=1}^n \vartheta_k$-a.e. $(\theta_1,\ldots,\theta_n)\in\prod_{k=1}^n\Theta_k$, the transformations $S_{\theta_1}^{(1)}, \ldots, S_{\theta_n}^{(n)}$ are invertible, and
$$
\rho\left(x_0,\, \left(\mathcal{S}_{\theta_1,\dots,\theta_{n-1}}^{-1}\circ S_{\theta_n}^{(n)} \circ \mathcal{S}_{\theta_1,\dots,\theta_{n-1}}\right) (x_0)\right)\leq M_n(\theta_1,\ldots,\theta_n);
$$

\item\label{cnd:A4}
$\displaystyle\delta:=\sup_{n\geq n_0} \int_{\Theta_1}\ldots \int_{\Theta_n} M_n(\theta_1,\ldots,\theta_n)\;\vartheta_n(d\theta_n)\ldots \vartheta_1(d\theta_1)<\infty.$

\end{enumerate}
Then there exists a~measure $\pi\in\mathcal{M}_1(X)$ that is geometrically attracting in $d_{BL}$ for the Markov chain $\Phi^{\bullet}$, specified by \eqref{e:model_def}. More precisely, there exist $q\in (0,1)$ and $C\in\mathbb{R}_+$ such that
\begin{equation}
\label{e:main}
d_{BL}\left(\mu P^{(n)},\pi\right)\leq Cq^n(1+\<V,\mu\>)\quad\text{for any}\quad \mu\in\mathcal{M}_{1,1}(X),\;n\in\mathbb{N},
\end{equation}
with $V(x):=\rho(x,x_0)$ for $x\in X$, and, in particular, $\mu P^{(n)}\stackrel{w}{\to}\pi$ for every $\mu\in\mathcal{M}_1(X)$.
\end{theorem}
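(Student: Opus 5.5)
The plan is to build the limit $\pi$ as the $d_{BL}$-limit of the Cauchy sequence $\{\delta_{x_0}P^{(n)}\}_{n\in\n}$ in the complete space $(\mathcal{M}_1(X),d_{BL})$. After that I will pass from $x_0$ to arbitrary $x$ and then to arbitrary $\mu\in\mathcal{M}_{1,1}(X)$, using the estimate \eqref{e:attracting_dirac} of Remark~\ref{rem:2}. Everything is done on the canonical space \eqref{df:ps}. There the variables $L_k(\eta_k)$, $k\in\n$, are independent, and by \ref{cnd:A2} they are identically distributed. Hence, by Lemma~\ref{lem:diaconis} (the Diaconis--Freedman lemma) together with \eqref{eq:L_conditions}, there are $r\in(0,1)$, $c_0>0$ and $q_0\in(0,1)$ such that
$$\pr\bigl(\Lambda_n>r^n\bigr)\leq c_0q_0^n,\qquad\text{where}\quad \Lambda_n:=L_1(\eta_1)\cdots L_n(\eta_n),\quad n\in\n.$$
For any $f\in L_{b,1}(X)$ and $X$-valued random variables $Z,Z'$, the difference of expectations satisfies $|\ew f(Z)-\ew f(Z')|\leq \ew\min\{2,\rho(Z,Z')\}$. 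This will be the basic tool.

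\emph{Step 1 (dependence on the initial point).} By \ref{cnd:A1},
$$\rho\bigl(\mathcal{S}_{\eta_1,\ldots,\eta_n}(x),\mathcal{S}_{\eta_1,\ldots,\eta_n}(x_0)\bigr)\leq \Lambda_n V(x)\quad\text{a.s.}$$
Splitting according to whether $\Lambda_n\leq r^n$ or not gives
$$d_{BL}\bigl(\delta_xP^{(n)},\delta_{x_0}P^{(n)}\bigr)\leq \ew\min\{2,\Lambda_nV(x)\}\leq 2c_0q_0^n+r^nV(x).$$

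\emph{Step 2 (consecutive times).} This is where \ref{cnd:A3} enters. For $n\geq n_0-1$ I write
$$\mathcal{S}_{\theta_1,\ldots,\theta_{n+1}}=\mathcal{S}_{\theta_1,\ldots,\theta_n}\circ T_{n+1},\qquad T_{n+1}:=\mathcal{S}_{\theta_1,\ldots,\theta_n}^{-1}\circ S^{(n+1)}_{\theta_{n+1}}\circ\mathcal{S}_{\theta_1,\ldots,\theta_n}.$$
Applying the Lipschitz bound of the composition, I get
$$\rho\bigl(\Phi^{x_0}_{n+1},\Phi^{x_0}_n\bigr)=\rho\bigl(\mathcal{S}_{\eta_1,\ldots,\eta_n}(T_{n+1}x_0),\mathcal{S}_{\eta_1,\ldots,\eta_n}(x_0)\bigr)\leq \Lambda_n\, M_{n+1}(\eta_1,\ldots,\eta_{n+1}).$$
This is the content I expect from Lemma~\ref{lem:composition}. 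The point is that the two laws are compared through a coupling on the same $\omega$, even though $\Phi_{n+1}$ is not close to $\Phi_n$ pathwise in the naive sense. The factors $\Lambda_n$ and $M_{n+1}$ are dependent, but no independence is needed. On $\{\Lambda_n\leq r^n\}$ the bound is at most $r^nM_{n+1}$, whose expectation is at most $r^n\delta$ by \ref{cnd:A4}; on the complement I use the bound $2$. Hence
$$d_{BL}\bigl(\delta_{x_0}P^{(n+1)},\delta_{x_0}P^{(n)}\bigr)\leq 2c_0q_0^n+\delta r^n.$$
Summing these increments shows that $\{\delta_{x_0}P^{(n)}\}$ is Cauchy. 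So it converges to some $\pi$ with
$$d_{BL}\bigl(\delta_{x_0}P^{(n)},\pi\bigr)\leq C'q^n,\qquad q:=\max\{q_0,r\},$$
for $n\geq n_0$. For the finitely many $n<n_0$ the bound holds trivially, since $d_{BL}\leq2$, after enlarging $C'$.

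\emph{Step 3 (conclusion).} Combining Steps 1 and 2 yields $d_{BL}(\delta_xP^{(n)},\pi)\leq Cq^n(1+V(x))$. Integrating this against $\mu$ via \eqref{e:attracting_dirac} gives \eqref{e:main}. Since Dirac measures belong to $\mathcal{M}_{1,1}(X)$, Remark~\ref{rem:3} then gives $\mu P^{(n)}\stackrel{w}{\to}\pi$ for every $\mu\in\mathcal{M}_1(X)$.

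The main obstacle is Step 2: recognising that the conjugated displacement in \ref{cnd:A3} is exactly what makes the coupling bound $\Lambda_nM_{n+1}$ work. A second delicate point is checking that the hypotheses of Lemma~\ref{lem:diaconis} are met, namely the i.i.d.\ structure coming from \ref{cnd:A2} and the product measure, and the moment condition $R<\infty$. Measurability of $\Lambda_n$ and of the composed maps follows from \eqref{e:measurabilty}.
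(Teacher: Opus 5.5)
Your proof is correct. It shares the paper's overall architecture: the canonical coupling on $\Omega$, Lemma~\ref{lem:diaconis} applied to $\Lambda_n$, the conjugation identity behind the second part of Lemma~\ref{lem:composition}, the Cauchy sequence $\{\delta_{x_0}P^{(n)}\}$, and integration against $\mu$ via \eqref{e:attracting_dirac}.

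The real difference is in Step~2. The paper controls the whole tail of the trajectory pathwise. It fixes $s\in(1,e^{\gamma})$ and introduces the events $A_k=\{\Lambda_k>e^{-k\gamma}\}$ and $B_k=\{M_k>s^k\}$, the latter bounded by Chebyshev's inequality using \ref{cnd:A4}. It then builds a single exceptional set $C_n=N\cup\bigcup_{k\ge n}(A_k\cup B_k)$. Off this set, $\rho(\Phi^{x_0}_n,\Phi^{x_0}_{n+m})\le c_2(se^{-\gamma})^n$ holds simultaneously for all $m$, and Lemma~\ref{lem:rand_var_dist} is applied once.

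You instead bound each one-step increment in $d_{BL}$ by an expectation, using $\ew[\Lambda_nM_{n+1}\mathbbm{1}_{\{\Lambda_n\le r^n\}}]\le r^n\delta$, which indeed needs no independence. You then sum the increments with the triangle inequality for $d_{BL}$. This is shorter and avoids several pieces of the paper's machinery:
\begin{itemize}
\item the events $B_k$ and the auxiliary parameter $s$;
\item the resulting trade-off between $1/s$ and $se^{-\gamma}$ in the rate, which forces the paper's $q\ge e^{-\gamma/2}$;
\item the bookkeeping of a null set $N$ involving infinitely many coordinates, since each of your estimates uses only finitely many $\eta_k$.
\end{itemize}

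The paper's pathwise route buys stronger information, though the theorem does not use it. Since $C_n$ decreases and $\pr(C_n)\to0$, the sequence $\Phi^{x_0}_n$ is almost surely Cauchy, so the forward iterates from $x_0$ converge a.s., not merely in law. That is the real effect of the conjugation condition \ref{cnd:A3}. Your increment-wise argument would need an additional Borel--Cantelli step to recover this.
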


\begin{remark}\label{rem:fin_int}
As for condition \ref{cnd:A2}, it can be shown that, if $R<\infty$, then the integral $\int_{\Theta_1} \ln L_1(\theta)\, \vartheta_1(d\theta)$ exists and lies in $[-\infty, \infty)$. This follows from Lemma~\ref{lem:diaconis} and Remark~\ref{rem:czeb}, given in Section \ref{sec:5}. Moreover, the Jensen inequality implies that \eqref{eq:L_conditions} holds whenever $R<1$.
\end{remark}

\begin{remark}\label{rem:X_bounded}
Note that conditions \ref{cnd:A3} and \ref{cnd:A4} are trivially satisfied (with $M_n\equiv 0$) whenever $S_{\theta}^{(n)}$ is invertible for $\vartheta_n$-a.e. $\theta\in\Theta_n$ and every $n\in\n$, and \hbox{$S_{\theta}^{(n)}(x_0)=x_0$} for $\vartheta_n$-a.e. $\theta\in\Theta_n$ and each $n\geq n_0$. In this case, $\delta_{x_0}$ is the unique stationary distribution of $\Phi^{\bullet}$, and Theorem~\ref{thm:main} is then valid with~$\pi=\delta_{x_0}$.
\end{remark}

\begin{remark}\label{rem:pi_is_inv}
Suppose that $(\Theta_n, \mathcal{A}_n, \vartheta_n)=(\Theta_1, \mathcal{A}_1, \vartheta_1)$, $S_{\theta}^{(n)}=S_{\theta}^{(1)}$ for all $n\in\n$ and~\hbox{$\theta\in \Theta_1$}, and that $S_{\theta}^{(1)}$ is continuous for $\vartheta_1$-a.e. $\theta\in \Theta_1$ (as is the case, e.g., when \ref{cnd:A1} holds). Then, if the conclusion of Theorem \ref{thm:main} is fulfilled with a~measure~$\pi\in\mathcal{M}_1(X)$, this measure must be the unique stationary distribution of the chain $\Phi^{\bullet}$. Indeed, within the given setting, $\Phi^{\bullet}$ is a~time-homogeneous Markov chain with transition law~$P_1$, which is Feller (see Remark~\ref{rem:Feller}). The fact that $\mu P_1^n=\mu P^{(n)}\stackrel{w}{\to}\pi$ for all $\mu\in\mathcal{M}_1(X)$ then easily implies the claim.
\end{remark}

The proof of Theorem \ref{thm:main}, along with all necessary auxiliary results, will be provided in Section \ref{sec:5}. As a~complement to this theorem, we present below a~result based on Proposition~\ref{prop:m_11}, ensuring that the measure $\pi$ has the finite first moment.

\begin{proposition}\label{prop:finite_moment}
Suppose that conditions \ref{cnd:A1} and \ref{cnd:A2} hold with $R<1$, and that 
\begin{equation}\label{e:war_moment} 
b:=\sup_{n\in\n}\int_{\Theta_n}\rho\left(S_{\theta}^{(n)}(x_0),\,x_0\right)\vartheta_n(d\theta)<\infty.
\end{equation}
Then the convergence $\mu P^{(n)}\stackrel{w}{\to}\pi$ for certain $\mu\in\mathcal{M}_{1,1}(X)$ and $\pi\in\mathcal{M}_1(X)$ implies that~\hbox{$\pi\in \mathcal{M}_{1,1}(X)$}. In particular, if hypotheses \ref{cnd:A3} and \ref{cnd:A4} also hold, then the measure $\pi$ whose existence is asserted in Theorem~\ref{thm:main} belongs to~$\mathcal{M}_{1,1}(X)$.
\end{proposition}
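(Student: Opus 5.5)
The plan is to reduce everything to Proposition~\ref{prop:m_11}, applied with $V:=\rho(x_0,\cdot)$. This function is continuous and nonnegative, and $\<V,\mu\><\infty$ because $\mu\in\mathcal{M}_{1,1}(X)$. It then only remains to verify the uniform drift condition \eqref{e:lap} for the kernels $P_n$ given by \eqref{e:kernel_def}, with $a:=R$ and $b$ as in \eqref{e:war_moment}.

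To check the drift condition, fix $n\in\n$ and $x\in X$. Using the dual formula \eqref{e:op_dual_def}, which is valid for nonnegative Borel $V$ (with possibly infinite values), and the triangle inequality, I write
\begin{align*}
P_nV(x)&=\int_{\Theta_n}\rho\left(x_0,S_\theta^{(n)}(x)\right)\vartheta_n(d\theta)\\
&\leq\int_{\Theta_n}\rho\left(S_\theta^{(n)}(x),S_\theta^{(n)}(x_0)\right)\vartheta_n(d\theta)+\int_{\Theta_n}\rho\left(S_\theta^{(n)}(x_0),x_0\right)\vartheta_n(d\theta).
\end{align*}
By \ref{cnd:A1}, the integrand of the first term is bounded by $L_n(\theta)\rho(x,x_0)$ for $\vartheta_n$-a.e.\ $\theta$. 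The key point is that \ref{cnd:A2} says the law of $L_n$ under $\vartheta_n$ coincides with the law of $L_1$ under $\vartheta_1$. Hence $\int_{\Theta_n}L_n\,d\vartheta_n=\int_{\Theta_1}L_1\,d\vartheta_1=R$, by the change-of-variables formula applied to the identity function on $(0,\infty)$. The first term is therefore at most $RV(x)$. The second term is at most $b$ by \eqref{e:war_moment}. This gives $P_nV\leq RV+b$ for all $n$, with $R\in[0,1)$ by hypothesis (indeed $R>0$, since $L_1>0$).

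Proposition~\ref{prop:m_11} then yields $\<V,\pi\>\leq b/(1-R)<\infty$, that is, $\pi\in\mathcal{M}_{1,1}(X)$.

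For the final assertion, assume \ref{cnd:A3} and \ref{cnd:A4} as well. Theorem~\ref{thm:main} then provides $\pi$ with $\mu P^{(n)}\stackrel{w}{\to}\pi$ for every $\mu\in\mathcal{M}_1(X)$. Taking for instance $\mu=\delta_{x_0}\in\mathcal{M}_{1,1}(X)$ reduces this case to the first part.

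The only point requiring care is the measurability and transfer-of-distribution step. One must note that $\theta\mapsto\rho(x_0,S^{(n)}_\theta(x))$ is measurable by \eqref{e:measurabilty}, and that the equidistribution in \ref{cnd:A2} indeed transfers the mean $R$ to every $n$. Both are routine, so no genuine obstacle is expected.
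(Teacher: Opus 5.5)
Your proposal is correct and matches the paper's proof: both reduce to Proposition~\ref{prop:m_11} with $V=\rho(\cdot,x_0)$, $a=R$ and $b$ from \eqref{e:war_moment}, and verify the drift condition via the triangle inequality, \ref{cnd:A1}, and the equidistribution of $L_n$ in \ref{cnd:A2}. Your explicit handling of the final assertion, taking $\mu=\delta_{x_0}$ in the conclusion of Theorem~\ref{thm:main}, simply spells out a step the paper leaves implicit.
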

\begin{proof}
In view of Proposition \ref{prop:m_11}, it suffices to verify that \eqref{e:lap} holds with $V=\rho(\cdot,x_0)$, $a=R$, and $b$ defined as above. This, however, can be simply deduced by referring to \eqref{e:op_dual_def} and the imposed assumptions. Indeed, for any $n\in\mathbb{N}$ and $x\in X$, we get
\begin{align*}
P_n V(x)&= \int_{\Theta_n} \rho\left(S_{\theta}^{(n)}(x), x_0\right)\,\vartheta_n(d\theta)\\
&\leq \int_{\Theta_n} \rho\left(S_{\theta}^{(n)}(x), S_{\theta}^{(n)}(x_0)\right)\,\vartheta_n(d\theta)+\int_{\Theta_n} \rho\left( S_{\theta}^{(n)}(x_0),\, x_0\right)\,\vartheta_n(d\theta)\\
&\leq  \left(\int_{\Theta_n} L_n(\theta)\,\vartheta_n(d\theta)\right)\rho(x,x_0)+ \sup_{n\in\n}\int_{\Theta_n}\rho\left( S_{\theta}^{(n)}(x_0),\, x_0\right)\vartheta_n(d\theta)=RV(x)+b,
\end{align*}
which ends the proof.
\end{proof}

It is also worth noting the following straightforward consequence of Theorem \ref{thm:main}:

\begin{corollary}\label{cor:LM-const}
Suppose that there exists $L\in  (0,1)$ such that
$$\rho\left(S_{\theta}^{(n)}(x), S_{\theta}^{(n)}(y) \right)\leq L\rho(x,y)\quad \text{for}\quad x,y\in X,\; \vartheta_n\text{-a.e.}\;\theta\in\Theta_n,\; n\in\n,$$
and that there exist $x_0\in X$, $\delta\in \mathbb{R}_+$, and $n_0\in\n$ such that, for every $n\geq n_0$ and \hbox{$\bigotimes_{k=1}^n\vartheta_k$-a.e.} $(\theta_1,\ldots,\theta_n)\in\prod_{k=1}^n \Theta_k$, the transformations $S_{\theta_1}^{(1)},\ldots,S_{\theta_n}^{(n)}$ are invertible, and
$$\rho\left(x_0,\, \left(\mathcal{S}_{\theta_1,\dots,\theta_{n-1}}^{-1}\circ S_{\theta_n}^{(n)} \circ \mathcal{S}_{\theta_1,\dots,\theta_{n-1}}\right) (x_0)\right)\leq \delta.$$
Then the conclusion of Theorem \ref{thm:main} is valid.
\end{corollary}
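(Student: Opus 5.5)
The corollary follows from Theorem~\ref{thm:main} by choosing constant functions. Take $L_n\equiv L$ on $\Theta_n$ for every $n\in\n$, and $M_n\equiv\delta$ on $\prod_{k=1}^n\Theta_k$ for every $n\geq n_0$. These functions are trivially measurable. The plan is to check \ref{cnd:A1}--\ref{cnd:A4} one at a time; the conclusion, including \eqref{e:main}, then follows verbatim.

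\textbf{Condition \ref{cnd:A1}.} This is exactly the assumed Lipschitz bound with constant $L$, valid for $\vartheta_n$-a.e. $\theta$.

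\textbf{Condition \ref{cnd:A2}.} For $E\in\mathcal{B}((0,\infty))$, the set $L_n^{-1}(E)$ is either $\Theta_n$ or $\emptyset$, according as $L\in E$ or not. Hence $\vartheta_n(L_n^{-1}(E))=\mathbbm{1}_E(L)$, which does not depend on $n$. Moreover, $R=L<\infty$ and $\int_{\Theta_1}\ln L_1\,d\vartheta_1=\ln L<0$ because $L\in(0,1)$.

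\textbf{Condition \ref{cnd:A3}.} The hypothesis gives invertibility and the displacement bound $\le \delta=M_n(\theta_1,\ldots,\theta_n)$ for $\bigotimes_{k=1}^n\vartheta_k$-a.e. tuple and every $n\geq n_0$.

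\textbf{Condition \ref{cnd:A4}.} The iterated integral of the constant $\delta$ against probability measures equals $\delta$ for each $n$. So the supremum is $\delta<\infty$.

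No step is a real obstacle. The only point needing care is \ref{cnd:A2}: the requirement is that the law of $L_n$ under $\vartheta_n$ is independent of $n$, and this is immediate because every $L_n$ has the point-mass law $\delta_L$. Applying Theorem~\ref{thm:main} completes the argument.
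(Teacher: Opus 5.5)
Your proposal is correct and matches the paper's approach. The paper states this corollary as a straightforward consequence of Theorem~\ref{thm:main} without writing out a proof, and the implicit argument is precisely yours: take the constant functions $L_n\equiv L$ and $M_n\equiv\delta$, then check \ref{cnd:A1}--\ref{cnd:A4}, including the point-mass verification of \ref{cnd:A2}.
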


It should be highlighted that, in general, Theorem \ref{thm:main} does not guarantee that the measure $\pi$ is stationary for the chain $\Phi ^{\bullet }$. This is demonstrated in the example below.
\begin{example}\label{rem:2.5}
Consider $(X,\rho):=(\mathbb{R},|\cdot|)$, and let \hbox{$(\Theta_n,\mathcal{A}_n,\vartheta_n):=(\Theta, \mathcal{A},\vartheta)$,} $n\in\n$, where $(\Theta, \mathcal{A},\vartheta)$ is an arbitrary probability space. Furthermore, define
$$S_{\theta}^{(1)}(x)=\frac{x}{2}-\frac{1}{2}\quad\text{and}\quad S_{\theta}^{(n)}(x)=\frac{x}{2}\quad\text{for}\quad n\geq 2,\;\theta\in\Theta.$$
Clearly, these transformations are invertible contractions, and
$$\left(\mathcal{S}_{\theta_1,\dots,\theta_{n-1}}^{-1}\circ S_{\theta_n}^{(n)} \circ \mathcal{S}_{\theta_1,\dots,\theta_{n-1}}\right) (1)=1 \quad\text{for any}\quad n\geq 2,\;\theta\in \Theta,$$
which ensures that the assumptions of Corollary \ref{cor:LM-const} (and thus Theorem \ref{thm:main}) are fulfilled (with $L=1/2$, $x_0=1$, $\delta=0$, and $n_0=2$). Moreover, since $S_{\theta}^{(1)}(1)=0$ and $S_{\theta}^{(2)}(0)=0$ for all $\theta\in\Theta$, it follows that $\delta_1 P_1=\delta_0$ and $\delta_0 P_2 =\delta_{0}$. Hence $$\delta_1 P^{(n)}=\delta_1P_1P_2^{n-1}=\delta_0 P_2^{n-1}=\delta_0\quad\text{for}\quad n\in\n,$$ which means that the conclusion of Theorem \ref{thm:main} must be valid with $\pi=\delta_0$. This measure, however, is not stationary for $\Phi^{\bullet}$ since $\delta_0 P_1=\delta_{-1/2}$.
\end{example}


\section{Stochastic model given by the backward iterates}\label{sec:3}
As before, consider a~Polish metric space $(X,\rho)$, an arbitrary sequence $\{(\Theta_n, \mathcal{A}_n,\vartheta_n)\}_{n\in \n}$ of probability spaces, and families $\{S_{\theta}^{(n)}:\; \theta\in \Theta_n\}$, $n\in\n$, of transformations from $X$ into itself such that the maps \eqref{e:measurabilty} are product measurable. Given any $\mu\in\mathcal{M}_1(X)$, we are now concerned with the $X$-valued random process \hbox{$\Psi^{\mu}:=\{\Psi_n^{\mu}\}_{n\in\n_0}$}, where
\begin{equation}
\label{e:model_def_rev}
\Psi^{\mu}_0\sim \mu,\quad \Psi^{\mu}_n:=\left(S_{\eta_1}^{(1)}\circ \ldots \circ S_{\eta_n}^{(n)}\right)\left(\Psi_0^{\mu}\right) \quad\text{for}\quad \;x\in X,\;n\in\mathbb{N},
\end{equation}
and $\{\eta_n\}_{n\in\n}$ is a~sequence of mutually independent random variables, independent of $\Psi^{\mu}_0$, such that $\eta_n$ takes values in $\Theta_n$ and $\eta_n\sim\vartheta_n$ for every $n\in\n$. Additionally, following our earlier notational convention, we set
$$
\bmathcal{S}_{\theta_1,\ldots,\theta_n}:=S_{\theta_1}^{(1)}\circ \ldots \circ S_{\theta_n}^{(n)}\quad\text{for}\quad n\in\n,\; \theta_1\in\Theta_1,\ldots,\theta_n\in\Theta_n.
$$

Moreover, similarly to Section~\ref{sec:2}, we assume that $\Psi^x:=\Psi^{\delta_x}$, $x\in X$, are constructed on the probability space~$(\Omega,\mathcal{A},\pr)$ of the form~\eqref{df:ps}, so that
$$\Psi_0^x(\omega)= x, \quad \Psi_n^x(\omega)=\bar{S}_{\eta_1(\omega),\ldots,\eta_n(\omega)}(x)\quad\text{for} \quad \omega\in\Omega,\;x\in X,\;n\in\n,$$
and the sequence $\{\eta_n\}_{n\in\n}$ is defined by~\eqref{df:eta}.

Of course, in general, $\{\Psi_n^{\mu}\}_{n\in\n_0}$ does not exhibit the Markov property. However, defining the stochastic kernels $\bar{P}^{(n)}$ by \eqref{e:composition_rev} with $P_n$ given by \eqref{e:kernel_def}, it can be shown that, for every $n\in\n$, the measure $\mu \bar{P}^{(n)}$ coincides with the distribution of $\Psi_n^{\mu}$, i.e., \eqref{e:trans_op_rev} holds. This follows immediate from the fact that
\begin{equation}\label{e:dist_psi}
\pr\left(\Psi_n^x\in A\right)=\bar{P}^{(n)}(x,A)\quad\text{for any}\quad x\in X,\; A\in\mathcal{B}(X).
\end{equation}
To verify the latter, fix $x\in X$ and $A\in\mathcal{B}(X)$. For $n=1$ we simply have
$$\pr\left(\Psi_1^x\in A\right)=\pr\left(S_{\eta_1}^{(1)}(x)\in A\right)=\int_{\Theta_1} \mathbbm{1}_A\left(S_{\theta}^{(1)}(x)\right)\vartheta_1(d\theta)=P_1(x,A)=\bar{P}^{(1)}(x,A).$$
Now, suppose that \eqref{e:dist_psi} holds for some arbitrarily fixed $n\in\n$ and note that
\begin{align*}
\bar{P}^{(n+1)}(x,A)&=\bar{P}^{(n+1)}\mathbbm{1}_A(x)=P_{n+1}\bar{P}^{(n)}\mathbbm{1}_A(x)
=\int_{\Theta_{n+1}} \bar{P}^{(n)}\mathbbm{1}_A\left(S_{\theta}^{(n+1)}(x)\right)\vartheta_{n+1}(d\theta)\\
&= \int_{\Theta_{n+1}}  \bar{P}^{(n)}\left(S_{\theta}^{(n+1)}(x),A\right)\vartheta_{n+1}(d\theta),
\end{align*}
where the third equality is due to \eqref{e:op_dual_def}. Then, using Fubini’s theorem, the inductive hypothesis, and the above identity, we obtain
\begin{align*}
\pr(&\Psi^x_{n+1}\in A)=\int_{\Theta_1\times\ldots\times\Theta_{n+1}} \mathbbm{1}_A\left(\bmathcal{S}_{\theta_1,\ldots,\theta_{n+1}}(x)\right)(\vartheta_1\otimes\ldots\otimes\vartheta_{n+1})(d\theta_1\times\ldots\times d\theta_{n+1})\\
&=\int_{\Theta_{n+1}} \int_{\Theta_1\times\ldots\times\Theta_n} \mathbbm{1}_A  \left(\bmathcal{S}_{\theta_1,\ldots,\theta_n}\left( S_{\theta_{n+1}}^{(n+1)}(x)\right)\right)(\vartheta_1\otimes\ldots\otimes\vartheta_n)(d\theta_1\times\ldots\times d\theta_n)\,\vartheta_{n+1}(d\theta_{n+1})\\
&=\int_{\Theta_{n+1}} \pr\left(\Psi_n^{S_{\theta}^{(n+1)}(x)}\in A\right)  \vartheta_{n+1}(d\theta)=\int_{\Theta_{n+1}}\bar{P}^{(n)}
\left(S_{\theta}^{(n+1)}(x),\,A\right)  \,\vartheta_{n+1}(d\theta)\\
&=\bar{P}^{(n+1)}(x,A),
\end{align*}
which implies the desired claim by the induction argument.

The main result of this section, stated below, constitutes a~counterpart to Theorem~\ref{thm:main} for the process $\{\Psi_n^{\,\bullet}\}_{n\in\n_0}$. Since this process relies on forward iterates, an analogous conclusion can be established under much less restrictive conditions; most importantly, the invertibility of $S_{\theta}^{(n)}$ is no longer required.

\begin{theorem}\label{thm:main2}
Suppose that hypotheses \ref{cnd:A1} and \ref{cnd:A2} of Theorem \ref{thm:main} are satisfied with some measurable functions $L_n:\Theta_n\to (0,\infty)$, $n\in\n$, and that there exist $x_0\in X$ and $n_0\in\n$ such that
\begin{enumerate}[label=\textnormal{(A\arabic*')}, leftmargin=*, start=3]
\item\label{cnd:B3} $\displaystyle \Delta:=\sup_{n\geq n_0} \int_{\Theta_n}\rho\left(x_0,\,S_{\theta}^{(n)}(x_0)\right)\vartheta_n(d\theta)<\infty.$
\end{enumerate}
Then there exists a~measure $\bar{\pi}\in\mathcal{M}_1(X)$ that is geometrically attracting in $d_{BL}$ for the process~$\Psi^{\bullet}$, specified by \eqref{e:model_def_rev}. More specifically, there exist $\bar{q}\in (0,1)$ and $\bar{C}\in\mathbb{R}_+$ such that
\begin{equation}
\label{e:main2}
d_{BL}\left(\mu \bar{P}^{(n)},\bar{\pi}\right)\leq \bar{C}\bar{q\,}^n(1+\<V,\mu\>)\quad\text{for any}\quad \mu\in\mathcal{M}_{1,1}(X),\;n\in\mathbb{N},
\end{equation}
where $V(x):=\rho(x,x_0)$ for $x\in X$, and, in particular, $\mu \bar{P}^{(n)}\stackrel{w}{\to}\pi$ for every $\mu\in\mathcal{M}_1(X)$.
\end{theorem}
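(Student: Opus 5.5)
The plan is a Letac-type pathwise argument: on the common space $(\Omega,\mathcal{F},\pr)$ of \eqref{df:ps} I will show that, for every $x\in X$, the backward iterates $\Psi_n^x=\bmathcal{S}_{\eta_1,\ldots,\eta_n}(x)$ converge almost surely, at a geometric rate in a suitable averaged sense, to a random limit $\Psi_\infty$ not depending on $x$. I will then take $\bar{\pi}$ to be the law of $\Psi_\infty$. By \eqref{e:dist_psi}, $\mu\bar{P}^{(n)}$ is the law of $\Psi_n^{\mu}$, which can be realized as $\Psi_n^{\xi}$ with $\xi\sim\mu$ independent of $\{\eta_n\}$. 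For $f\in L_{b,1}(X)$,
\[
|\<f,\mu\bar P^{(n)}-\bar\pi\>|\le \ew\min\{2,\rho(\Psi_n^{\xi},\Psi_\infty)\}.
\]
So it suffices to bound $\ew\min\{2,\rho(\Psi^{x}_n,\Psi_\infty)\}$ by $\bar C\bar q^{\,n}(1+\rho(x,x_0))$ and then integrate in $\mu$, exactly as in Remark~\ref{rem:2}.

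The key estimate compares consecutive compositions. Writing $\Lambda_n:=L_1(\eta_1)\cdots L_n(\eta_n)$, the Lipschitz property \ref{cnd:A1} gives, for $n\ge n_0$,
\[
\rho\bigl(\Psi^{x_0}_{n+1},\Psi^{x_0}_n\bigr)=\rho\bigl(\bmathcal{S}_{\eta_1,\ldots,\eta_n}(S^{(n+1)}_{\eta_{n+1}}(x_0)),\bmathcal{S}_{\eta_1,\ldots,\eta_n}(x_0)\bigr)\le \Lambda_n\,\rho\bigl(x_0,S^{(n+1)}_{\eta_{n+1}}(x_0)\bigr).
\]
I also use $\rho(\Psi_n^x,\Psi_n^{x_0})\le\Lambda_n\rho(x,x_0)$. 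This is the analogue of Lemma~\ref{lem:composition}, and here it is trivial because no conjugation is needed. Since $\eta_{n+1}$ is independent of $\Lambda_n$, the factor $\rho(x_0,S^{(n+1)}_{\eta_{n+1}}(x_0))$ has mean at most $\Delta$ by \ref{cnd:B3}.

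The main obstacle is that $\ew\Lambda_n=R^n$ may grow, since only $\ew\ln L_1<0$ is assumed. I will handle this with Lemma~\ref{lem:diaconis}, using the fact from \ref{cnd:A2} that the $L_n(\eta_n)$ are i.i.d. That lemma gives $r\in(0,1)$ and $p\in(0,1)$ such that the event $G_n:=\{\Lambda_n\le r^n\}$ satisfies $\pr(G_n^c)\le C_0p^n$. I then split each increment into two cases.
\begin{enumerate}[label=(\roman*)]
\item On $G_n$, the increment is bounded by $r^n\rho(x_0,S^{(n+1)}_{\eta_{n+1}}(x_0))$, whose expectation is at most $\Delta r^n$.
\item On $G_n^c$, I truncate by $2$ before taking expectations. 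Alternatively, I use Cauchy–Schwarz-free Hölder only through $\ew[\Lambda_n\mathbbm 1_{G_n^c}]$, which is bounded by something like $(R^n\pr(G_n^c))$. To avoid that growth I will instead work with $\min\{1,\cdot\}$ throughout, using $\min\{1,a+b\}\le \min\{1,a\}+\min\{1,b\}$.
\end{enumerate}

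The almost sure convergence itself will follow from Borel–Cantelli. Eventually $G_n$ holds, and $\sum_n r^n\rho(x_0,S^{(n+1)}_{\eta_{n+1}}(x_0))<\infty$ a.s. because its expectation is finite. Hence $\{\Psi^{x_0}_n\}$ is a.s. Cauchy and converges in the Polish space $X$ to some $\Psi_\infty$. Moreover $\rho(\Psi^x_n,\Psi^{x_0}_n)\to0$ a.s., so the limit does not depend on $x$.

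For the rate, I use $\min\{2,\sum_k a_k\}\le\sum_k\min\{2,a_k\}$ together with the case split above:
\[
\ew\min\{2,\rho(\Psi_n^{x_0},\Psi_\infty)\}\le\sum_{k\ge n}\bigl(\Delta r^k+2C_0p^k\bigr)\le C_1 q_1^n .
\]
Similarly, $\ew\min\{2,\Lambda_n\rho(x,x_0)\}\le r^n\rho(x,x_0)+2C_0p^n$. Together these give \eqref{e:main2} with $\bar q:=\max\{r,p\}$.

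The first $n_0$ steps need separate care, because \ref{cnd:B3} only covers $n\ge n_0$. For $n<n_0$ the bound is trivial after enlarging $\bar C$, since $d_{BL}\le2$. Finally, Remark~\ref{rem:3} gives the weak convergence for all $\mu\in\mathcal{M}_1(X)$.
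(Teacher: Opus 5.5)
Your proof is correct, and it differs from the paper's in how the displacements are controlled and in how $\bar\pi$ is obtained.

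\textbf{How the paper argues.} The paper controls the one-step displacements $K_k(\eta_k)=\rho(x_0,S^{(k)}_{\eta_k}(x_0))$ pathwise. It fixes an auxiliary $s\in(1,e^{\gamma})$ and removes the events $B_k=\{K_k(\eta_k)>s^k\}$ via Markov's inequality. On the complement of $C_n=N\cup\bigcup_{k\ge n}(A_k\cup B_k)$, every increment is then deterministically bounded by $s^ke^{-(k-1)\gamma}$. Lemma~\ref{lem:rand_var_dist} converts these pathwise bounds into $d_{BL}$ estimates. Finally, $\bar\pi$ is obtained as the limit of the Cauchy sequence $\{\delta_{x_0}\bar P^{(n)}\}$ in the complete space $(\mathcal{M}_1(X),d_{BL})$.

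\textbf{How your route differs.} You bound the increments in mean on the good event, $\ew[\mathbbm{1}_{G_k}\Lambda_k\,\rho(x_0,S^{(k+1)}_{\eta_{k+1}}(x_0))]\le\Delta r^k$, and truncate by $2$ off it. This removes $s$ and the events $B_k$ entirely. It also gives the rate $\max\{r,p\}$ directly, rather than one degraded by splitting $e^{-\gamma}$ into the factors $s^{-1}$ and $se^{-\gamma}$. In addition, you build $\bar\pi$ as the law of the almost sure limit of $\bmathcal{S}_{\eta_1,\ldots,\eta_n}(x_0)$, using only completeness of $(X,\rho)$. As a by-product, this gives an explicit representation of $\bar\pi$ that the paper's purely distributional argument does not state.

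\textbf{Two small remarks.} First, the independence of $\eta_{k+1}$ and $\Lambda_k$ is not actually needed. Since $\mathbbm{1}_{G_k}\Lambda_k\le r^k$ pointwise, the same expectation-plus-truncation estimate would also work in the forward setting of Theorem~\ref{thm:main}, with $M_{k+1}$ in place of the displacement. Second, the aside in your item (ii) about bounding $\ew[\Lambda_n\mathbbm{1}_{G_n^c}]$ should simply be dropped, because the truncation by $2$ already does the job.
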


\begin{remark}\label{rem:X_bounded2}
It is clear that \ref{cnd:B3} is trivially satisfied whenever~$X$ is bounded or there exists $n_0\in\n$ such that $S_{\theta}^{(n)}(x_0)=x_0$ for $\vartheta_n$-a.e. $\theta\in\Theta_n$ and every $n\geq n_0$. In the latter case, Theorem~\ref{thm:main2} is obviously valid with $\bar{\pi}=\delta_{x_0}$, which is the unique stationary distribution of the process~$\Psi^{\bullet}$.
\end{remark}

The proof of Theorem \ref{thm:main2} is given in Section \ref{sec:5}. For now, let us state an analogue of Proposition \ref{prop:finite_moment} (which can be established in the same way), along with some straightforward consequences of the above theorem.

\begin{proposition}\label{cor:finite_moment}
Suppose that the hypotheses of Theorem \ref{thm:main2} hold with $R<1$ and $n_0=1$ (where~$R$ is defined in \ref{cnd:A2}). Then the measure $\bar{\pi}$ satisfying its conclusion belongs to~$\mathcal{M}_{1,1}(X)$.
\end{proposition}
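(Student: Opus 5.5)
The plan is to proceed exactly as in the proof of Proposition~\ref{prop:finite_moment}, namely by reducing the claim to Proposition~\ref{prop:m_11} applied to the reversed compositions $\bar{P}^{(n)}$. Proposition~\ref{prop:m_11} explicitly covers the case $\mu\bar{P}^{(n)}\stackrel{w}{\to}\pi$. We take $V:=\rho(\cdot,x_0)$ with the point $x_0$ from \ref{cnd:B3}; this $V$ is continuous and nonnegative. We also choose $\mu:=\delta_{x_0}$, so that $\<V,\mu\>=0<\infty$. By Theorem~\ref{thm:main2}, together with Remark~\ref{rem:3}, we have $\delta_{x_0}\bar{P}^{(n)}\stackrel{w}{\to}\bar{\pi}$. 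Hence the first hypothesis of Proposition~\ref{prop:m_11} holds with $\pi=\bar{\pi}$.

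It remains to verify the uniform drift condition \eqref{e:lap}. We use the dual formula \eqref{e:op_dual_def} and the triangle inequality $\rho(S_\theta^{(n)}(x),x_0)\le \rho(S_\theta^{(n)}(x),S_\theta^{(n)}(x_0))+\rho(S_\theta^{(n)}(x_0),x_0)$. For every $n\in\n$ and $x\in X$, this gives
$$P_nV(x)\le \Bigl(\int_{\Theta_n}L_n\,d\vartheta_n\Bigr)V(x)+\int_{\Theta_n}\rho\bigl(x_0,S_\theta^{(n)}(x_0)\bigr)\vartheta_n(d\theta).$$
Here \ref{cnd:A1} is used to bound the first term.

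Two points need care in this estimate. First, by \ref{cnd:A2} the laws of $L_n$ under $\vartheta_n$ coincide for all $n$. Therefore $\int_{\Theta_n}L_n\,d\vartheta_n=\int_{\Theta_1}L_1\,d\vartheta_1=R$, which is obtained by the change-of-variables formula for the image measure on $(0,\infty)$. Second, since $n_0=1$, the supremum in \ref{cnd:B3} runs over all $n\in\n$. Consequently, the second integral is bounded by $\Delta<\infty$ uniformly in $n$. This is the only place where the assumption $n_0=1$ is needed. Without it, the finitely many initial steps might have displacement integrals that are infinite, and then \eqref{e:lap} could fail.

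Together these give $P_nV\le RV+\Delta$ for all $n$, with $a:=R\in[0,1)$ and $b:=\Delta$. Proposition~\ref{prop:m_11} then yields $\<V,\bar{\pi}\>\le \Delta/(1-R)<\infty$, that is, $\bar{\pi}\in\mathcal{M}_{1,1}(X)$. I do not expect a genuine obstacle here. The only subtle step is the identification $\int_{\Theta_n} L_n\,d\vartheta_n = R$ from the distributional condition in \ref{cnd:A2}, and this is a routine image-measure argument.
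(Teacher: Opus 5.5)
Your proposal is correct and follows the route the paper intends. The paper says this proposition is proved in the same way as Proposition~\ref{prop:finite_moment}: one verifies the drift condition \eqref{e:lap} with $V=\rho(\cdot,x_0)$, $a=R$ and $b=\Delta$ (finite over all $n$ because $n_0=1$), and then applies Proposition~\ref{prop:m_11} in its reversed-composition form, with your identification $\int_{\Theta_n}L_n\,d\vartheta_n=R$ and the choice $\mu=\delta_{x_0}$ supplying the details the paper leaves implicit.
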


\begin{corollary}\label{cor:LM-const2}
Suppose that there exists $L\in (0,1)$ such that
$$\rho\left(S_{\theta}^{(n)}(x), S_{\theta}^{(n)}(y) \right)\leq L\rho(x,y)\quad \text{for any}\quad x,y\in X,\;\vartheta_n\text{-a.e.}\;\; \theta\in\Theta_n,\;n\in\n,$$
and that there exist $x_0\in X$, $\Delta\in\mathbb{R}_+$, and $n_0\in\n$ such that 
$$\rho\left(x_0, S_{\theta}^{(n)}(x_0)\right)\leq \Delta\quad \text{for}\quad \vartheta_n\text{-a.e.}\;\;\theta\in \Theta_n\quad\text{and}\quad n\geq n_0.$$
Then the conclusion of Theorem \ref{thm:main2} is valid and $\bar{\pi}\in\mathcal{M}_{1,1}(X)$.
\end{corollary}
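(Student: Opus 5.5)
The plan is to verify the hypotheses of Theorem~\ref{thm:main2} directly and then apply Proposition~\ref{cor:finite_moment} for the moment claim.

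\emph{Step 1: conditions \ref{cnd:A1} and \ref{cnd:A2}.} Take $L_n(\theta):=L$ for all $\theta\in\Theta_n$ and $n\in\n$. Then \ref{cnd:A1} is exactly the assumed uniform Lipschitz bound. Each $L_n$ is constant, so $\vartheta_n(L_n^{-1}(E))=\mathbbm{1}_E(L)$ is independent of $n$. Moreover $R=L<\infty$ and $\int_{\Theta_1}\ln L_1\,d\vartheta_1=\ln L<0$, so \ref{cnd:A2} holds.

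\emph{Step 2: condition \ref{cnd:B3}.} Since $\rho(x_0,S_\theta^{(n)}(x_0))\le\Delta$ for $\vartheta_n$-a.e.\ $\theta$ and every $n\ge n_0$, integrating gives a supremum over $n\ge n_0$ that is at most $\Delta<\infty$. Theorem~\ref{thm:main2} therefore yields the geometrically attracting measure $\bar{\pi}$.

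\emph{Step 3: the first moment.} Proposition~\ref{cor:finite_moment} needs $R=L<1$, which holds, and $n_0=1$, which is not assumed. This is the only real obstacle. The fix is to show that \ref{cnd:B3} holds with $n_0=1$ by enlarging $\Delta$ for the finitely many indices $n<n_0$.

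For such $n$, the Lipschitz bound alone does not control $\rho(x_0,S_\theta^{(n)}(x_0))$. I would therefore pick a different base point $x_1$, namely a point of the form $S_{\theta_{n_0}}^{(n_0)}(x_0)$ with $\theta_{n_0}$ outside a null set, and use
\[
\rho\bigl(x_1,S_\theta^{(n)}(x_1)\bigr)\le(1+L)\,\rho(x_0,x_1)+\rho\bigl(x_0,S^{(n)}_\theta(x_0)\bigr).
\]
This still contains the uncontrolled term for $n<n_0$, so a change of base point does not help.

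Instead I would bypass Proposition~\ref{cor:finite_moment} and argue directly. Write $\mu\bar{P}^{(n)}=\nu_n\,\bar{Q}^{(n-n_0+1)}$ in law, where the innermost composition $S^{(n_0)}\circ\cdots\circ S^{(n)}$ is run with $n_0=1$-type estimates. The point is that $\bar{\pi}$ is the law of $\bar{\mathcal{S}}_{\eta_1,\ldots,\eta_{n_0-1}}(Z)$, where $Z$ is the limit for the shifted system and has finite first moment by Proposition~\ref{cor:finite_moment} applied to the shifted sequence. Composition with the Lipschitz maps $S^{(k)}_{\eta_k}$, $k<n_0$, multiplies distances by at most $L$. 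The only finiteness issue is then $\ew\rho\bigl(x_0,\bar{\mathcal{S}}_{\eta_1,\ldots,\eta_{n_0-1}}(x_0)\bigr)$.

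If that expectation were infinite, the claim could fail, so I would check whether the authors intend $n_0=1$. Failing that, the natural reading is that $\bar{\pi}\in\mathcal{M}_{1,1}(X)$ requires finiteness of those first $n_0-1$ displacement integrals, which I would add via the $\max$ over $n<n_0$.
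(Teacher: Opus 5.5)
For $n_0=1$ your argument is complete. For $n_0\geq 2$ the moment claim you could not prove is false, so the obstacle you hit is a defect of the statement rather than of your reasoning.

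\textbf{Steps 1 and 2.} These are exactly the intended argument; the paper gives no separate proof of this corollary. With $L_n\equiv L$ you get \ref{cnd:A1} and \ref{cnd:A2} with $R=L<1$, the a.e.\ bound yields \ref{cnd:B3}, and Theorem~\ref{thm:main2} applies. The moment assertion is meant to come from Proposition~\ref{cor:finite_moment}, which, as you noticed, requires $n_0=1$.

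\textbf{Counterexample for $n_0\geq 2$.} Take $X=\Theta_n=\mathbb{R}$, let $\vartheta_1$ be the standard Cauchy distribution ($\vartheta_n$ arbitrary for $n\geq 2$), and put $S_{\theta}^{(1)}(x)=x/2+\theta$ and $S_{\theta}^{(n)}(x)=x/2$ for $n\geq 2$. All hypotheses hold with $L=1/2$, $x_0=0$, $\Delta=0$, $n_0=2$. However,
\begin{equation*}
\Psi_n^{\mu}=S_{\eta_1}^{(1)}\bigl(2^{-(n-1)}\Psi_0^{\mu}\bigr)=2^{-n}\Psi_0^{\mu}+\eta_1\quad\text{for all } n\in\mathbb{N}.
\end{equation*}
In particular, $\delta_0\bar{P}^{(n)}$ is the Cauchy law for every $n$. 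Hence $\bar{\pi}$ is the Cauchy law, which has infinite first moment.

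\textbf{Sharpening your shifted-system analysis.} That analysis correctly identifies what is needed, and it can be made exact. Under the a.e.\ bound, the a.s.\ limit $Z$ of $S^{(n_0)}_{\eta_{n_0}}\circ\cdots\circ S^{(m)}_{\eta_m}(x_0)$ satisfies $\rho(x_0,Z)\leq\Delta/(1-L)$ almost surely. No appeal to Proposition~\ref{cor:finite_moment} is needed for this. Since $\bar{\pi}$ is the law of $\bar{\mathcal{S}}_{\eta_1,\ldots,\eta_{n_0-1}}(Z)$ and that map is $L^{n_0-1}$-Lipschitz, you get $\bar{\pi}\in\mathcal{M}_{1,1}(X)$ if and only if $\mathbb{E}\,\rho\bigl(x_0,\bar{\mathcal{S}}_{\eta_1,\ldots,\eta_{n_0-1}}(x_0)\bigr)<\infty$.

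\textbf{Cleanest repair.} The simplest fix is the one you end with. Suppose $\int_{\Theta_n}\rho\bigl(x_0,S^{(n)}_{\theta}(x_0)\bigr)\,\vartheta_n(d\theta)<\infty$ for each $n<n_0$. Then \ref{cnd:B3} holds with $n_0=1$, with $\Delta$ replaced by the maximum of $\Delta$ and these finitely many integrals; this works because \ref{cnd:B3} is an integral condition, not an a.e.\ bound. Proposition~\ref{cor:finite_moment} then applies verbatim. The change-of-base-point paragraph and the shifted-system detour can be dropped.

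\textbf{A small slip.} Since $\bar{P}^{(n)}=P_n\cdots P_1$, the correct factorization is $\mu\bar{P}^{(n)}=(\mu P_n\cdots P_{n_0})\,\bar{P}^{(n_0-1)}$, i.e.\ the shifted block acts on $\mu$ first. You wrote the factors in the reverse order.
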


\begin{corollary}
Suppose that $(\Theta_n, \mathcal{A}_n, \vartheta_n)=(\Theta_1, \mathcal{A}_1, \vartheta_1)$ and $S_{\theta}^{(n)}=S_{\theta}^{(1)}$ for all $n\in\n$ and~\hbox{$\theta\in \Theta_1$}. Further, assume that hypotheses \ref{cnd:A1}, \ref{cnd:A2}, and \ref{cnd:B3} hold (with $L_n=L_1$ for $n\in \n$). Then, the Markov chain $\Phi^{\bullet}$, specified by \eqref{e:model_def}, has a~unique stationary distribution, which is exponentially attracting in~$d_{BL}$.
\end{corollary}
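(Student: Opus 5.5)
The statement is the last corollary: in the time-homogeneous case the Markov chain $\Phi^{\bullet}$ has a unique stationary distribution that attracts exponentially in $d_{BL}$. The plan is to transfer the conclusion of Theorem~\ref{thm:main2} from the backward process $\Psi^{\bullet}$ to $\Phi^{\bullet}$. The link is that, when all the systems coincide, the forward and backward compositions have the same law. Stationarity will then come from the Feller argument of Remark~\ref{rem:pi_is_inv}.

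\emph{Step 1.} First I check that the hypotheses of Theorem~\ref{thm:main2} hold; they are assumed verbatim. This yields $\bar{\pi}\in\mathcal{M}_1(X)$, $\bar{q}\in(0,1)$ and $\bar{C}\in\mathbb{R}_+$ such that
$$d_{BL}\bigl(\mu\bar{P}^{(n)},\bar{\pi}\bigr)\leq \bar{C}\bar{q}^{\,n}\bigl(1+\<V,\mu\>\bigr)$$
for all $\mu\in\mathcal{M}_{1,1}(X)$ and $n\in\n$.

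\emph{Step 2.} Since $P_n=P_1$ for every $n$, by \eqref{e:kernel_def}, the definitions \eqref{e:composition} and \eqref{e:composition_rev} give $P^{(n)}=P_1^n=\bar{P}^{(n)}$. Equivalently, $(\eta_1,\dots,\eta_n)$ and $(\eta_n,\dots,\eta_1)$ have the same law $\vartheta_1^{\otimes n}$. Hence $\mu P^{(n)}=\mu\bar{P}^{(n)}$, and the bound from Step~1 is exactly \eqref{e:main} for $\Phi^{\bullet}$ with $\pi:=\bar{\pi}$. By Remark~\ref{rem:3} (or Remark~\ref{rem:2}), $\mu P_1^n\stackrel{w}{\to}\pi$ for every $\mu\in\mathcal{M}_1(X)$.

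\emph{Step 3.} By \ref{cnd:A1}, $S^{(1)}_\theta$ is Lipschitz, hence continuous, for $\vartheta_1$-a.e.\ $\theta$. So $P_1$ is Feller by Remark~\ref{rem:Feller}; the null exceptional set is harmless, since it changes nothing under integration in \eqref{e:op_dual_def}. Fix any $\mu$, say $\delta_{x_0}$. Then $\mu P_1^{n+1}=(\mu P_1^n)P_1\to\pi P_1$ by weak continuity of $P_1$, while also $\mu P_1^{n+1}\to\pi$. Therefore $\pi P_1=\pi$, i.e.\ $\pi$ is stationary. This is precisely the content of Remark~\ref{rem:pi_is_inv}.

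\emph{Step 4.} For uniqueness, let $\mu_*$ be any stationary distribution. Then $\mu_*=\mu_* P_1^n\stackrel{w}{\to}\pi$, so $\mu_*=\pi$; this is the second remark after Remark~\ref{rem:1}.

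There is no real obstacle here. The only points needing care are the identification $P^{(n)}=\bar{P}^{(n)}$ in the homogeneous case and the fact that the Feller property survives a $\vartheta_1$-null set of discontinuous maps. "Exponentially attracting" is then understood in the sense of \eqref{e:main}, for $\mu\in\mathcal{M}_{1,1}(X)$.
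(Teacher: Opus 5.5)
Your proof is correct and follows essentially the same route as the paper: you identify $\mu P^{(n)}=\mu\bar{P}^{(n)}$ in the homogeneous setting so that Theorem~\ref{thm:main2} gives geometric attraction for $\Phi^{\bullet}$, and then use the Feller argument of Remark~\ref{rem:pi_is_inv} for stationarity and uniqueness. The only differences are cosmetic: you read off $P^{(n)}=P_1^n=\bar{P}^{(n)}$ directly from the kernels instead of from the equality in law of $\Phi_n^x$ and $\Psi_n^x$, and you write out the stationarity and uniqueness steps that the paper delegates to that remark.
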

\begin{proof}
Within the given setting, $\Phi_n^x$ and $\Psi_n^x$ share the same distribution for any $n\in\n$ and~$x\in X$, which, in turn, implies that $\mu \bar{P}^{(n)}=\mu P^{(n)}$ for all $n\in\n$ and~\hbox{$\mu\in\mathcal{M}_1(X)$}. In this case, the conclusions of  Theorem~\ref{thm:main} and Theorem \ref{thm:main2} are therefore equivalent. Consequently, the latter implies the existence of an exponentially attracting distribution $\pi$ for the chain $\Phi$. Finally,  Remark~\ref{rem:pi_is_inv} guarantees that $\pi$ is the unique invariant distribution of this chain.
\end{proof}

\section{Application to systems of affine transformations}\label{sec:4}
In this section, we will focus on the case where $S_{\theta}^{(n)}$ are affine transformations on a~Banach space. Unless stated otherwise, we assume throughout that  $\{(\Theta_n, \mathcal{A}_n,\vartheta_n)\}_{n\in \n}$ is an arbitrary sequence of probability spaces, and that $(X,\norma{\cdot})$ is a~separable Banach space over $\mathbb{K}\in\{\mathbb{R},\mathbb{C}\}$. Additionally, by $\onorma{\cdot}$ we denote the operator norm in the space of continuous linear operators from $X$ into itself. 

Adapting Theorem \ref{thm:main} (in conjunction with Proposition \ref{prop:finite_moment}) to the setting described above gives the following result:

\begin{proposition}\label{prop:affine_general}
Let \hbox{$\mathbb{A}_n:\Theta_n\to X^X$}, \hbox{$b_n:\Theta_n\to X$}, and $L_n:\Theta_n\to (0,\infty)$, $n\in\n$, be such that, for every $n\in\n$, the functions $\A{n}{\theta}:X\to X$, $\theta\in\Theta_n$, are invertible continuous linear operators, and the mappings
\begin{gather*}
b_n,\quad L_n,\quad X\times \Theta_n \ni (x,\theta) \mapsto \A{n}{\theta}(x)\in X,\\
\Theta_n \ni \theta\mapsto \onorma{\A{n}{\theta}}\in\mathbb{R},\quad\text{and}\quad \Theta_n \ni \theta\mapsto \onorma{\A{n}{\theta}^{-1}}\in\mathbb{R}
\end{gather*}
are measurable. Furthermore, suppose that the following conditions hold:

\begin{enumerate}[label=\textnormal{(B\arabic*)}, leftmargin=*]
\item\label{cnd:E1} $\onorma{\A{n}{\theta}}\leq L_n(\theta)$ for any  $\theta\in\Theta_n$ and $n\in\n$;
\item\label{cnd:E2} $\vartheta_n(L_n^{-1}(d\theta))$ does not depend on $n$, and $L_1$ satisfies \eqref{eq:L_conditions};
\item\label{cnd:E3} For every $n\in\n$, the integrals
\begin{gather*}
\displaystyle
\alpha_n:=\int_{\Theta_n}\onorma{\A{n}{\theta}^{-1}}\vartheta_n(d\theta),
\quad 
\beta_n:=\int_{\Theta_n}\norma{b_n (\theta)}\onorma{\A{n}{\theta}^{-1}}\vartheta_n(d\theta),\\[0.2cm] 
\quad 
\delta_n:=\int_{\Theta_n}\onorma{\A{n}{\theta}}\onorma{\A{n}{\theta}^{-1}} \, \vartheta_n(d\theta),
\quad 
\gamma_n:=\int_{\Theta_n} \norma{b_n(\theta)}\vartheta_n(d\theta),
\end{gather*}
are finite, and
$$\prod_{n=1}^{\infty} \delta_n<\infty,\quad \sum_{n=1}^{\infty} \frac{\beta_n}{\delta_n} \prod_{j=1}^{n-1}\frac{\alpha_j}{\delta_j}<\infty,\quad \text{and}\quad \sup_{n\in\n}\gamma_n\prod_{j=1}^{n-1}\alpha_j<\infty.$$


\end{enumerate}
Then the conclusion of Theorem \ref{thm:main} is valid for the Markov chain $\Phi^{\bullet}$ defined in \eqref{e:model_def} via the affine transformations of the form
\begin{equation}\label{def:iso}
S_{\theta}^{(n)}(x):=\A{n}{\theta}(x)+b_n(\theta)\quad\text{for}\quad x\in X,\; n\in\n,\; \theta\in\Theta_n.
\end{equation}
If additionally $R<1$ (with $R$ defined in \eqref{eq:L_conditions}), then the attracting measure $\pi$ satisfying \eqref{e:main} belongs to $\mathcal{M}_{1,1}(X)$.
\end{proposition}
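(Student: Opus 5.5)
The plan is to apply Theorem~\ref{thm:main} with $x_0:=0$ and $n_0:=1$, and then Proposition~\ref{prop:finite_moment} for the moment statement. Conditions \ref{cnd:A1} and \ref{cnd:A2} are immediate. The map $S_{\theta}^{(n)}$ is Lipschitz with constant $\onorma{\A{n}{\theta}}\leq L_n(\theta)$ by \ref{cnd:E1}, and \ref{cnd:E2} is literally \ref{cnd:A2}. Each $S_\theta^{(n)}$ is invertible, with $(S_\theta^{(n)})^{-1}(y)=\A{n}{\theta}^{-1}(y-b_n(\theta))$. So the real work is to construct $M_n$ and verify \ref{cnd:A3} and \ref{cnd:A4} from \ref{cnd:E3}.

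\textbf{Computing the conjugated displacement.} Fix $\theta_1,\dots,\theta_n$. Write $A_j:=\A{j}{\theta_j}$, $b_j:=b_j(\theta_j)$ and $B_m:=A_m\cdots A_1$, with $B_0:=I$. Then $\mathcal{S}_{\theta_1,\dots,\theta_m}(x)=B_mx+c_m$ with $c_m=B_m s_m$, where $s_m:=\sum_{k=1}^m B_k^{-1}b_k$. A direct computation gives
$$\mathcal{S}^{-1}_{\theta_1,\dots,\theta_{n-1}}\circ S^{(n)}_{\theta_n}\circ \mathcal{S}_{\theta_1,\dots,\theta_{n-1}}(0)=B_{n-1}^{-1}(c_n-c_{n-1})=B_{n-1}^{-1}A_nB_{n-1}s_{n-1}+B_{n-1}^{-1}b_n-s_{n-1}.$$
In the first term I will use $B_{n-1}B_k^{-1}=A_{n-1}\cdots A_{k+1}$, so that it equals $\sum_{k<n}B_{n-1}^{-1}A_nA_{n-1}\cdots A_{k+1}b_k$. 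I then define $M_n(\theta_1,\dots,\theta_n)$ as the pointwise norm bound
$$\sum_{k=1}^{n-1}\Big(\prod_{j\le k}\onorma{A_j^{-1}}\Big)\norma{b_k}\prod_{j=k+1}^{n-1}\big(\onorma{A_j}\onorma{A_j^{-1}}\big)\onorma{A_n}+\norma{b_n}\prod_{j<n}\onorma{A_j^{-1}}+\sum_{k=1}^{n-1}\norma{b_k}\prod_{j\le k}\onorma{A_j^{-1}}.$$
This function is measurable by the stated measurability hypotheses, so \ref{cnd:A3} holds.

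\textbf{Bounding the integral of $M_n$.} Integrate $M_n$ using independence (product measure); each factor involves a single $\theta_j$. Write $P:=\prod_{n}\delta_n$. Since $\delta_j\ge1$ (because $\onorma{A}\onorma{A^{-1}}\geq 1$), the partial products $\prod_{j\le m}\delta_j$ are all bounded by $P$. Let $\Sigma:=\sum_k(\beta_k/\delta_k)\prod_{j<k}(\alpha_j/\delta_j)$. The three pieces are bounded as follows.
\begin{itemize}
\item The first sum gives $R\sum_{k<n}\beta_k\prod_{j<k}\alpha_j\prod_{k<j<n}\delta_j$. Rewriting $\beta_k\prod_{j<k}\alpha_j$ as $\frac{\beta_k}{\delta_k}\prod_{j<k}\frac{\alpha_j}{\delta_j}\prod_{j\le k}\delta_j$, this is at most $R\,P\,\Sigma$.
\item The middle term gives $\gamma_n\prod_{j<n}\alpha_j$, which is uniformly bounded by the third condition in \ref{cnd:E3}.
\item The last sum gives at most $P\,\Sigma$, by the same rewriting.
\end{itemize}
Hence $\delta<\infty$, i.e.\ \ref{cnd:A4} holds, and Theorem~\ref{thm:main} yields the first claim. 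I expect the main obstacle to be precisely this bookkeeping step. Naive estimates produce squared factors of $\onorma{A_j^{-1}}$ that do not factor under independence. The cancellation $B_{n-1}B_k^{-1}=A_{n-1}\cdots A_{k+1}$ is what avoids them, so I would organize the computation around $s_m$ from the start.

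\textbf{Finite first moment.} If $R<1$, I invoke Proposition~\ref{prop:finite_moment}, which requires $b=\sup_n\int\norma{b_n(\theta)}\vartheta_n(d\theta)=\sup_n\gamma_n<\infty$. Since $\onorma{A^{-1}}\ge 1/\onorma{A}\ge 1/L_n$, Jensen's inequality gives $\alpha_j\ge 1/R>1$. Hence $\gamma_n\le\gamma_n\prod_{j<n}\alpha_j$, which is bounded by \ref{cnd:E3}, and the claim $\pi\in\mathcal{M}_{1,1}(X)$ follows.
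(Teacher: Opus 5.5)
Your proposal is correct and follows essentially the paper's route: verify \ref{cnd:A1}--\ref{cnd:A4} with $x_0=0$ by bounding the conjugated displacement $\mathcal{S}^{-1}_{\theta_1,\dots,\theta_{n-1}}\circ S^{(n)}_{\theta_n}\circ\mathcal{S}_{\theta_1,\dots,\theta_{n-1}}(0)$ by a product-form $M_n$. You then integrate $M_n$ coordinatewise using $\delta_j\ge 1$ and the three conditions of \ref{cnd:E3}, and invoke Proposition~\ref{prop:finite_moment} via $\alpha_j>1$, just as the paper does. The differences are cosmetic: you take $n_0=1$ with $M_1=\norma{b_1(\theta_1)}$, and you keep $s_{n-1}$ separate instead of using $\onorma{\A{n}{\theta_n}-\operatorname{id}_X}\le L_n(\theta_n)+1$, which gives a slightly sharper $M_n$. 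You also get $\alpha_j\ge 1/R$ by Jensen rather than from $1/t\ge 1-\ln t$; and note that the paper's direct bound $\onorma{\mathbf{A}^{-1}_{\sigma_{n-1}}}\norma{\mathbf{b}_{\sigma_{n-1}}}$ already avoids squared factors, so the cancellation you emphasize is not actually essential.
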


\begin{proof}
To establish the main statement, it is enough to verify hypotheses \ref{cnd:A1}-\ref{cnd:A4} of Theorem \ref{thm:main}. 

Clearly, due to condition \ref{cnd:E1}, for every $n\in\n$ and  $\theta\in\Theta_n$, we have
$$\norma{S_{\theta}^{(n)}(x)-S_{\theta}^{(n)}(y)}=\norma{\A{n}{\theta}(x-y)}\leq \onorma{\A{n}{\theta}}\norma{x-y}\leq L_n(\theta)\norma{x-y}\;\;\text{for}\;\; x,y\in X.$$
Hence, also taking into account \ref{cnd:E2}, we see that hypotheses \ref{cnd:A1} and \ref{cnd:A2} are satisfied. 

Now, let $n\geq 2$, fix  $\theta_1\in \Theta_1,\ldots,\theta_n\in\Theta_n$,  and put $\sigma_k:=(\theta_1,\ldots,\theta_k)$ for $k\in\{1,\ldots,n\}$. It~is easy to check that
$$\mathcal{S}_{\sigma_{n-1}}(x)=\mathbf{A}_{\sigma_{n-1}}(x)+\mathbf{b}_{\sigma_{n-1}},\quad \mathcal{S}_{\sigma_{n-1}}^{-1}(x)=\mathbf{A}_{\sigma_{n-1}}^{-1}(x-\mathbf{b}_{\sigma_{n-1}})\quad\text{for}\quad x\in X,
$$
where
$$\mathbf{A}_{\sigma_{n-1}}=\A{n-1}{\theta_{n-1}}\circ\ldots\circ \A{1}{\theta_1},$$
$$
\mathbf{b}_{\sigma_{n-1}}=b_{n-1}(\theta_{n-1})+\sum_{k=1}^{n-2} \left(\A{n-1}{\theta_{n-1}}\circ\ldots\circ \A{k+1}{\theta_{k+1}}\right)(b_k(\theta_k)). $$
Thus, defining $\mathcal{H}_{\sigma_{n}}:=\mathcal{S}_{\sigma_{n-1}}^{-1}\circ S_{\theta_n}^{(n)}\circ \mathcal{S}_{\sigma_{n-1}}$, we get
\begin{align*}
\mathcal{H}_{\sigma_{n}}(0)&
=\mathbf{A}_{\sigma_{n-1}}^{-1}\left(\A{n}{\theta_n}\left(\mathbf{A}_{\sigma_{n-1}}(0)+\mathbf{b}_{\sigma_{n-1}} \right)+b_n(\theta_n)-\mathbf{b}_{\sigma_{n-1}} \right)\\
&=\mathbf{A}_{\sigma_{n-1}}^{-1}\left(\left(\A{n}{\theta_n}-\operatorname{id}_X\right)(\mathbf{b}_{\sigma_{n-1}})+b_n(\theta_n)\right).
\end{align*}
Bearing in mind the submultiplicity of $\onorma{\cdot}$, the continuity (boundedness) of $A_n(\theta)$, $\theta\in\Theta_n$, and hypothesis \ref{cnd:E1}, we can therefore conclude that
\begin{align*}
\norma{\mathcal{H}_{\sigma_{n}}(0)}&\leq \onorma{\mathbf{A}_{\sigma_{n-1}}^{-1}} \left(\onorma{\A{n}{\theta_n}-\operatorname{id}_X}\norma{\mathbf{b}_{\sigma_{n-1}}} +\norma{b_n(\theta_n)}\right)\\
&\leq \left(\prod_{j=1}^{n-1} \onorma{\A{j}{\theta_j}^{-1}} \right)\left(\left(\onorma{\A{n}{\theta_n}}+1\right)\sum_{k=1}^{n-1}\norma{b_k(\theta_k)}\prod_{j=k+1}^{n-1}\onorma{\A{j}{\theta_j}}+\norma{b_n(\theta_n)}\right)\\
&\leq\left(L_n(\theta_n)+1\right) \sum_{k=1}^{n-1} \boldsymbol{\Delta}_k(\theta_1,\ldots,\theta_{n-1})+\norma{b_n(\theta_n)}\prod_{j=1}^{n-1} \boldsymbol{\alpha}_j(\theta_j),
\end{align*}
with
$$\boldsymbol{\Delta}_k(\theta_1,\ldots,\theta_{n-1}):=\norma{b_k(\theta_k)} \left( \prod_{j=1}^{n-1} \onorma{\A{j}{\theta_j}^{-1}} \right)\left( \prod_{j=k+1}^{n-1}\onorma{\A{j}{\theta_j}}\right),$$
$$\boldsymbol{\alpha}_j(\theta):=\onorma{\A{j}{\theta}^{-1}}.$$
Further, note that $\boldsymbol{\Delta}_k(\theta_1,\ldots,\theta_{n-1})$ can be rewritten as
$$\boldsymbol{\Delta}_k(\theta_1,\ldots,\theta_{n-1})=\boldsymbol{\beta}_k(\theta_k)\left( \prod_{j=1}^{k-1}\boldsymbol{\alpha}_j(\theta_j)\right)\left( \prod_{j=k+1}^{n-1}\boldsymbol{\delta}_j(\theta_j)\right),$$
where 
$$\boldsymbol{\beta}_k(\theta):=\norma{b_k(\theta)}\onorma{\A{k}{\theta}^{-1}}, \quad \boldsymbol{\delta}_j(\theta):=\onorma{\A{j}{\theta}}\onorma{\A{j}{\theta}^{-1}}.$$

Consequently, we have shown that
\begin{align*}
\|\mathcal{H}_{\theta_1,\ldots,\theta_n}(0)\|&\leq M_n(\theta_1,\ldots,\theta_n) \quad\text{for any}\quad n\geq 2,\;\theta_1\in\Theta_1,\ldots,\theta_n\in\Theta_n
\end{align*}
with
$$M_n(\theta_1,\ldots,\theta_n)\hspace{-0.1cm}:=(L_n(\theta_n)+1)\sum_{k=1}^{n-1}\boldsymbol{\beta}_k(\theta_k)\left( \prod_{j=1}^{k-1}\boldsymbol{\alpha}_j(\theta_j)\right)\left( \prod_{j=k+1}^{n-1}\boldsymbol{\delta}_j(\theta_j)\right)+\norma{b_n(\theta_n)}\prod_{j=1}^{n-1}\boldsymbol{\alpha}_j(\theta_j),$$
which means that \ref{cnd:A3} holds with $x_0=0$, $n_0=2$, and $M_n$ defined as above.
Finally, from conditions \ref{cnd:E2} and \ref{cnd:E3} it follows that
\begin{align*}
\int_{\Theta_1}\ldots \int_{\Theta_n} M_n(\theta_1,\ldots,\theta_n)\;\vartheta_n(d\theta_n)\ldots &\vartheta_1(d\theta_1)= (R+1)\sum_{k=1}^{n-1} \beta_k\left(\prod_{j=1}^{k-1}\alpha_j\right)\left(\prod_{j=k+1}^{n-1}\delta_j\right)+\gamma_n\prod_{j=1}^{n-1}\alpha_j\\
&=(R+1)\left(\prod_{j=1}^{n-1} \delta_j \right)\sum_{k=1}^{n-1} \frac{\beta_k}{\delta_k}\prod_{j=1}^{k-1}\frac{\alpha_j}{\delta_j}+\gamma_n\prod_{j=1}^{n-1}\alpha_j\\
&\leq (R+1) \left(\prod_{j=1}^{\infty} \delta_j \right)\sum_{k=1}^{\infty} \frac{\beta_k}{\delta_k}\prod_{j=1}^{k-1}\frac{\alpha_j}{\delta_j}+\sup_{n\in\n}\gamma_n\prod_{j=1}^{n-1}\alpha_j<\infty,
\end{align*}
where the penultimate inequality is valid due to the fact that $\delta_j\geq 1$ for every $j\in\n$ (see~Remark~\ref{rem:alpha_delta} below). We have therefore shown that \ref{cnd:A4} is satisfied.

Finally, observe that 
$$\|S_{\theta}^{(n)}(0)\|=\norma{b_n(\theta)}\quad\text{for any}\quad n\in\n,\; \theta\in\Theta_n\quad\text{and}\quad\sup_{n\in\n} \gamma_n<\infty,\vspace{-0.3cm}$$ 
where the latter follows from the last formula in \ref{cnd:E3} and the fact that $\alpha_n>1$ for all~$n\in\n$ (see~Remark \ref{rem:alpha_delta}). This ensures that \eqref{e:war_moment} holds. Therefore, in view of Proposition \ref{prop:finite_moment}, $\pi\in\mathcal{M}_{1,1}(X)$ whenever $R<1$. The proof is now complete.
\end{proof}

\begin{remark}\label{rem:alpha_delta}
Note that, under the assumptions of Proposition \ref{prop:affine_general}, $\delta_n\geq 1$ and $\alpha_n>1$ for every $n\in\n$. Indeed, the bound on $\delta_n$ follows immediately from the fact that
$$\onorma{\A{n}{\theta}}\onorma{\A{n}{\theta}^{-1}}\geq \onorma{\A{n}{\theta}\A{n}{\theta}^{-1}}=\onorma{\operatorname{id}_X}=1.$$
As for $\alpha_n$, it suffices to use the inequality $1/t\geq 1-\ln t$ for $t>0$, together with conditions \ref{cnd:E1} and \ref{cnd:E2}; namely,
\begin{align*}
\alpha_n \geq \int_{\Theta_n}\frac{\vartheta_n(d\theta)}{\onorma{\A{n}{\theta}}}\geq \int_{\Theta_n}\frac{\vartheta_n(d\theta)}{L_n(\theta)}
\geq 1-\int_{\Theta_n} \ln L_n(\theta)\,\vartheta_n(d\theta)>1.
\end{align*}
\end{remark}

\begin{remark}
For the sequence $\{\gamma_n\}_{n\in\n}$, in Proposition~ \ref{prop:affine_general} it is in fact sufficient to assume that $\gamma_n<\infty$ for $n\geq n_0$ (with some $n_0\in \n$), and to replace the last requirement in \ref{cnd:E3} with $\sup_{n\geq n_0}\gamma_n\prod_{j=1}^{n-1}\alpha_j<\infty$. It should be kept in mind, however, that the assertion concerning the finiteness of the first moment of $\pi$ still requires $\sup_{n\in\n}\gamma_n<\infty$. 
\end{remark}

\begin{remark}\label{rem:iso}
If, for a~given $n\in\n$ and $\vartheta_n$-a.e. $\theta\in\Theta_n$, the operator $\onorma{\A{n}{\theta}}^{-1}\mathbb{A}_n(\theta)$ is an isometry, i.e., $\onorma{\mathbb{A}_n(\theta)(x)}=\onorma{\A{n}{\theta}}\norma{x}$ for all $x\in X$, then $\delta_n=1$, since the integrand in the relevant integral is identically equal to 1.

\end{remark}

\begin{remark}\label{rem:af_sup}
Under the additional assumption that there exists a~constant $c>0$ such that 
$$\onorma{\A{n}{\theta}}\leq c\quad\text{for}\quad \vartheta_n\text{-a.e.}\;\;\theta\in\Theta_n, \;n\in\n,$$
the conditions
$$\gamma_n<\infty \quad \text{for}\quad n\in\n\quad\text{and}\quad \sup_{n\in\n} \gamma_n\,\prod_{j=1}^{n-1}\alpha_j<\infty$$
result automatically from the remaining part of hypothesis~\ref{cnd:E3}. Indeed, for every~$n\in\n$,
$$\gamma_n=\int_{\Theta_n} \norma{b_n(\theta)}\onorma{\A{n}{\theta}^{-1}\A{n}{\theta}}\,\vartheta_n(d\theta)\leq
c\int_{\Theta_n} \norma{b_n (\theta)}\onorma{\A{n}{\theta}^{-1}}\vartheta_n(d\theta)= c\beta_n <\infty.$$ 
This, in turn, together with the fact $\delta_j\geq 1$ for all $j\in\n$ (cf. Remark \ref{rem:alpha_delta}), implies that
\begin{align*}
\gamma_n\,\prod_{j=1}^{n-1} \alpha_j
&\leq c\beta_n\prod_{j=1}^{n-1}\alpha_j\leq c \sum_{k=1}^n\beta_k  \left(\prod_{j=1}^{k-1} \alpha_j\right)\left(\prod_{j=k+1}^n\delta_j \right)=c\left(\prod_{j=1}^n \delta_j\right)\sum_{k=1}^n \frac{\beta_k}{\delta_k} \prod_{j=1}^{k-1}\frac{\alpha_j}{\delta_j},
\end{align*}
which finally gives
$$\sup_{n\in\n} \gamma_n\,\prod_{j=1}^{n-1} \alpha_j\leq c \left(\prod_{j=1}^{\infty} \delta_j\right)\sum_{k=1}^{\infty} \frac{\beta_k}{\delta_k} \prod_{j=1}^{k-1}\frac{\alpha_j}{\delta_j}<\infty.$$
\end{remark}

It is also worth noting that, in the case where $\mathbb{A}_n(\theta)$ are homotheties with center 0 (i.e.,~$\mathbb{A}_n(\theta)(x)=a_n(\theta)x$ for some $a_n:\Theta_n\to\mathbb{K}$), Proposition \ref{prop:affine_general} simplifies to the following:

\begin{corollary}\label{prop:affine}
Let $a_n:\Theta_n\to \mathbb{K}$, $b_n:\Theta_n\to X$, and $L_n:\Theta_n\to (0,\infty)$, $n\in \n$, be measurable functions such that \ref{cnd:E2} and the following conditions hold:
\begin{enumerate}[label=\textnormal{(B\arabic*')}, leftmargin=*]
\item\label{cnd:ex1} $0<|a_n(\theta)|\leq L_n(\theta)$ for any $\theta\in\Theta_n$ and $n\in\n$,
\setcounter{enumi}{2}
\item\label{cnd:ex3} For every $n\in\n$, the integrals
\begin{equation}\label{ex3:ints}
\displaystyle\alpha_n:=\int_{\Theta_n} \frac{1}{|a_n(\theta)|}\,\vartheta_n(d\theta), \quad \beta_n:=\int_{\Theta_n} \frac{\norma{b_n(\theta)}}{|a_n(\theta)|}\, \vartheta_n(d\theta), \quad \gamma_n:=\int_{\Theta_n} \norma{b_n(\theta)}\,\vartheta_n(d\theta)
\end{equation}
are finite, and
$$\displaystyle\sum_{n=1}^{\infty} \beta_n\, \prod_{j=1}^{n-1} \alpha_j<\infty,\quad  \sup_{n\in\n} \gamma_n\,\prod_{j=1}^{n-1}\alpha_j<\infty.$$
\end{enumerate}
Then the conclusion of Theorem \ref{thm:main} is valid for the Markov chain defined according to \eqref{e:model_def} via the transformations
\begin{equation}\label{def:af}
S_{\theta}^{(n)}(x):=a_n(\theta)x+b_n(\theta)\quad\text{for}\quad x\in X,\; n\in\n,\; \theta\in\Theta_n.
\end{equation}
If additionally $R<1$ (with $R$ defined in \emph{\ref{cnd:E2}}), then the attracting measure $\pi$ satisfying \eqref{e:main} belongs to $\mathcal{M}_{1,1}(X)$.

\end{corollary}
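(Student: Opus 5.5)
The corollary is a direct specialization of Proposition~\ref{prop:affine_general}. I would take $\mathbb{A}_n(\theta):=a_n(\theta)\operatorname{id}_X$ and keep the given $b_n$ and $L_n$. Then the maps in \eqref{def:af} are exactly of the form \eqref{def:iso}, and the task reduces to checking that all hypotheses of Proposition~\ref{prop:affine_general} hold. Its conclusion, including the moment statement when $R<1$, then yields the assertion verbatim.

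First, the structural requirements. Since $a_n(\theta)\neq 0$ by \ref{cnd:ex1}, each $\A{n}{\theta}$ is an invertible bounded linear operator with inverse $a_n(\theta)^{-1}\operatorname{id}_X$. Moreover,
$$\onorma{\A{n}{\theta}}=|a_n(\theta)|\quad\text{and}\quad \onorma{\A{n}{\theta}^{-1}}=1/|a_n(\theta)|,$$
using $\onorma{\operatorname{id}_X}=1$ (assuming $X\neq\{0\}$; the trivial space is degenerate and can be dismissed). Measurability of $(x,\theta)\mapsto a_n(\theta)x$ follows from the measurability of $a_n$ and the continuity of scalar multiplication $\mathbb{K}\times X\to X$. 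Measurability of $\theta\mapsto |a_n(\theta)|^{\pm1}$ is immediate. Condition \ref{cnd:E1} is exactly \ref{cnd:ex1}, and \ref{cnd:E2} is assumed.

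Next, I would compute the quantities appearing in \ref{cnd:E3}. The numbers $\alpha_n,\beta_n,\gamma_n$ defined there coincide with those in \eqref{ex3:ints}. Also, $\delta_n=\int_{\Theta_n}|a_n(\theta)|\cdot|a_n(\theta)|^{-1}\,\vartheta_n(d\theta)=1$ for every $n$, which is the isometry situation of Remark~\ref{rem:iso}. Hence $\prod_n\delta_n=1<\infty$, and
$$\sum_{n=1}^{\infty}\frac{\beta_n}{\delta_n}\prod_{j=1}^{n-1}\frac{\alpha_j}{\delta_j}=\sum_{n=1}^{\infty}\beta_n\prod_{j=1}^{n-1}\alpha_j<\infty.$$
The condition on $\sup_n\gamma_n\prod_{j<n}\alpha_j$ is assumed verbatim in \ref{cnd:ex3}, and all the integrals are finite by that same hypothesis. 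So \ref{cnd:E3} holds, and Proposition~\ref{prop:affine_general} applies.

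There is no real obstacle here. The only points needing a moment's care are the identity $\onorma{c\operatorname{id}_X}=|c|$ and the measurability of the composite map. Both are routine, so the proof is essentially bookkeeping.
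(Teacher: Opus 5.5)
Your proposal is correct and follows exactly the route the paper intends: the corollary is obtained by specializing Proposition~\ref{prop:affine_general} to $\mathbb{A}_n(\theta)=a_n(\theta)\operatorname{id}_X$. For this choice $\onorma{\A{n}{\theta}}\onorma{\A{n}{\theta}^{-1}}=1$, so $\delta_n=1$ (cf.\ Remark~\ref{rem:iso}) and condition~\ref{cnd:E3} reduces to~\ref{cnd:ex3}; your checks of invertibility, the operator-norm identities, and measurability are the details the paper leaves implicit.
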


\begin{remark}\label{rem:af_stale}
If the functions $a_n$ and $b_n$, $n\in\n$, are constant, conditions \ref{cnd:ex1}, \ref{cnd:E2}, and~\ref{cnd:ex3} of Corollary \ref{prop:affine} reduce to the following ones:
$$0<|a_n|\leq L<1\quad\text{for every}\quad n\in\n,  \quad \text{and}\quad \sum_{n=1}^{\infty} \frac{\norma{b_n}}{|a_1\cdot\ldots\cdot a_{n}|}<\infty.$$
\end{remark}

We shall now present a~fairly wide class of affine transformations for which the hypotheses of Proposition \ref{prop:affine_general} are satisifed.

\begin{example}\label{example:1}
Let $(\Theta,\mathcal{A},\vartheta)$ be an arbitrary probability space, and, for every $n\in\n$, let $T_n:\Theta_n\to\Theta$ be an $\mathcal{A}_n/\mathcal{A}$-measurable transformation satisfying $\vartheta_n(T_n^{-1}(d\theta))=\vartheta(d\theta)$. Further, suppose that we are given functions $\widehat{b}:\Theta\to X$, $L:\Theta \to (0,\infty)$, and \hbox{$\wA(\theta):\Theta\to X^X$} such that $\wA(\theta):X\to X$, $\theta\in\Theta$, are invertible continuous linear operators, and the mappings
\begin{gather}\label{eq:cnd_L0}
\begin{split}
&\widehat{b}, \quad L,\quad X\times \Theta \ni (x,\theta) \mapsto \wA(\theta)(x)\in X, \\
&\Theta \ni \theta\mapsto \onorma{\wA(\theta)}\in\mathbb{R},\quad \text{and}\quad \Theta \ni \theta\mapsto \onorma{\wA(\theta)^{-1}}\in\mathbb{R}\quad\text{are measurable},
\end{split}
\end{gather}
and the following conditions hold:
\begin{equation}\label{eq:cnd_L1}
\onorma{\wA(\theta)}\leq L(\theta)\quad\text{for all}\quad \;\;\theta\in\Theta,
\end{equation}
\begin{equation}\label{eq:cnd_L2}
\quad \widehat{R}:=\int_{\Theta} L(\theta)\,\vartheta(d\theta)<\infty,\quad \int_{\Theta} \ln L(\theta)\,\vartheta(d\theta)<0,
\end{equation}
\begin{gather}
\label{eq:cnd_L3}
\begin{split}
\displaystyle
\widehat{\alpha}:=\int_{\Theta}\onorma{\wA(\theta)^{-1}}\vartheta(d\theta)<
\infty,
\quad 
\widehat{\beta}:=\int_{\Theta}\norma{\widehat{b}(\theta)}\onorma{\wA(\theta)^{-1}}\vartheta(d\theta)<
\infty,\\[0.2cm] 
\quad 
\widehat{\delta}:=\int_{\Theta}\onorma{\wA(\theta)}\onorma{\wA(\theta)^{-1}} \, \vartheta(d\theta)=1,
\quad 
\widehat{\gamma}:=\int_{\Theta} \norma{\widehat{b}(\theta)}\vartheta(d\theta)<
\infty.
\end{split}
\end{gather}

Moreover, let $\mathcal{C}_n,\mathcal{D}_n:\Theta_n\to \mathbb{K}$, $n\in\n$, be any measurable functions such that, for certain sequences \hbox{$\{c_n\}_{n\in\n}, \{d_n\}_{n\in\n}\subset \mathbb{R}_+$}, 
\begin{equation}\label{eq:cnd_L4}
0<c_n\leq  |\mathcal{C}_n(\theta)|\leq 1 \quad\text{and}\quad |\mathcal{D}_n(\theta)|\leq d_n\quad\text{for}\quad  \theta\in\Theta_n,\; n\in\n.
\end{equation}

Finally, consider the affine transformations of the form \eqref{def:iso} with  
\begin{equation}\label{eq:ab}
\mathbb{A}_n(\theta)=\mathcal{C}_n(\theta) \wA(T_n(\theta)) \quad\text{and}\quad b_n(\theta)=\mathcal{D}_n(\theta)  \widehat{b}(T_n(\theta)) \quad\text{for}\quad \theta\in\Theta_n,\; n\in\mathbb{N},
\end{equation}
that is,
\begin{equation}  \label{e:af_Tn}
S_{\theta}^{(n)}(x):=\mathcal{C}_n(\theta)\,\wA(T_n(\theta))(x)+\mathcal{D}_n(\theta)\,\widehat{b}(T_n(\theta))\quad \text{for}\quad x\in X,\; n\in\mathbb{N},\;\theta\in\Theta_n.
\end{equation}

It follows directly from \eqref{eq:cnd_L1}, \eqref{eq:cnd_L2} and the first part of \eqref{eq:cnd_L4} that conditions \ref{cnd:E1} and \ref{cnd:E2} hold with $L_n:=L\circ T_n$, $n\in\n$. Moreover, note that, due to \eqref{eq:cnd_L3}-\eqref{eq:cnd_L4}, the sequences defined in \ref{cnd:E3} can be estimated as follows:
$$\alpha_n \leq \frac{\widehat{\alpha}}{c_n},\quad \beta_n\leq \frac{\widehat{\beta} d_n}{c_n},\quad \delta_n=1,\quad\text{and}\quad \gamma_n\leq \widehat{\gamma}d_n\quad\text{for}\quad n\in\n.$$
Hence, for every $n\in\n$,

$$\frac{\beta_n}{\delta_n} \prod_{j=1}^{n-1}\frac{\alpha_j}{\delta_j}= \beta_n \prod_{j=1}^{n-1} \alpha_j\leq \widehat{\beta}d_n\widehat{\alpha}^{n-1}\prod_{j=1}^{n} \frac{1}{c_j}=\frac{\widehat{\beta}}{\widehat{\alpha}}d_n\widehat{\alpha}^n\prod_{j=1}^{n} \frac{1}{c_j},$$

$$\gamma_n\,\prod_{j=1}^{n-1}\alpha_j\leq \widehat{\gamma}d_n\widehat{\alpha}^{n-1}\prod_{j=1}^{n-1} \frac{1}{c_j}\leq \frac{\widehat{\gamma}}{\widehat{\alpha}}d_n\widehat{\alpha}^{n}\prod_{j=1}^{n}\frac{1}{c_j}.$$
Consequently, we see that condition \ref{cnd:E3} is fulfilled whenever
\begin{equation}\label{ex:sum}
\sum_{n=1}^{\infty}d_n\widehat{\alpha}^n\prod_{j=1}^n \frac{1}{c_j}<\infty.
\end{equation}
In view of Proposition \ref{prop:affine_general}, one may therefore conclude that the assertion of Theorem \ref{thm:main} applies to the model defined above provided that it meets conditions \eqref{eq:cnd_L0}-\eqref{eq:cnd_L4} and \eqref{ex:sum}. If additionally $\widehat{R}<1$, then the the attracting measure $\pi$ belongs to $\mathcal{M}_{1,1}(X)$.

Naturally, since $c_n\leq 1$, the convergence of the series $\sum_{n=1}^{\infty} d_n\widehat{\alpha}^n$ is necessary for condition \eqref{ex:sum} to hold. It is also worth noting that \eqref{ex:sum} is satisfied, for instance, in each of the following cases:
\begin{equation}\label{ex:sum3}
\sum_{n=1}^{\infty} \left(1-c_n\right)<\infty\quad\text{and}\quad \sum_{n=1}^{\infty} \widehat{\alpha}^n d_n <\infty,
\end{equation}
\begin{equation}\label{ex:sum1}
\underline{c}:=\inf_{n\in\n} c_n >0 \quad \text{and}\quad \sum_{n=1}^{\infty} \left(\frac{\widehat{\alpha}}{\underline{c}} \right)^n d_n <\infty.
\end{equation}
The implication \eqref{ex:sum1}\; $\Rightarrow$\;\eqref{ex:sum} is immediate, whereas \eqref{ex:sum3}\;$\Rightarrow$\;\eqref{ex:sum} follows from the well-known fact that, for any $\{u_j\}_{j\in\n}\subset [0,1)$,  \hbox{$\prod_{j=1}^{\infty} (1-u_j)<\infty$} if and only if $\sum_{j=1}^{\infty} u_j<\infty$, applied to $u_j:=1-c_j$, together with the assumption that $0<c_j\leq 1$ for every $j\in\n$.

Let us emphasize here that, although the first part of \eqref{ex:sum3} implies the first part of~\eqref{ex:sum1}, and  \eqref{ex:sum1} implies the second part of \eqref{ex:sum3}, it is easy to verify that neither  \eqref{ex:sum3} nor \eqref{ex:sum1}, taken as a whole, implies the other.
\end{example}

An illustrative class of mappings $\wA$, $\widehat{b}$, and $L$ on $(0,1)$  that satisfy conditions specified in Example \ref{example:1} (with respect to the Lebesgue measure) is presented below.

\begin{example}\label{example:2}
Consider the probability space $(\Theta,\mathcal{A},\vartheta):=((0,1),\mathcal{B}((0,1)), \lambda)$, where $\lambda$ is the Lebesgue measure restricted to $\mathcal{B}((0,1))$, and suppose that we are given an arbitrary family of surjective (and thus invertible) linear isometries $\mathbb{B}(\theta) : X \to X$, $\theta\in\Theta$, such that \hbox{$(x,\theta) \mapsto \mathbb{B}(\theta)(x)$} is measurable. For example, in the case of $X=\mathbb{R}^N$ \hbox{(resp. $X=\mathbb{C}^N$)} equipped with the Euclidean norm, such isometries take the form \hbox{$\mathbb{B}(\theta)(x)=Q(\theta)x$}, where $Q(\theta)$ are \hbox{$N\times N$} scalar orthogonal (resp. unitary) matrices such that $\theta \mapsto Q(\theta)$ is $\mathcal{A}/\mathcal{B}(M_N)$-measurable (here, $M_N$ is the space of $N\times N$ matrices endowed with the Frobenius norm).

Further, fix
$$p,q\in(-1,1), \quad \bar{p}>\max\{0,p\}-1,\quad\bar{q}>\max\{0,q\}-1,$$
and let  $F:\Theta\to \mathbb{K}$, $f:\Theta \to X$ be arbitrary measurable functions such that
$$0<m_F\leq |F(\theta)|< e^{p+q},\quad \norma{f(\theta)}\leq M_f\quad\text{for any}\quad \theta\in\Theta$$
and some  $m_F,M_f\in (0,\infty)$.

We will show that conditions \eqref{eq:cnd_L0}-\eqref{eq:cnd_L3} hold for the mappings $\wA:\Theta\to X^X$, \hbox{$\widehat{b}:\Theta\to X$}, and \hbox{$L:\Theta\to (0,\infty)$} given by
$$\wA(\theta)(x):=(1-\theta)^p\theta^qF(\theta)\,\mathbb{B}(\theta)(x),\quad \widehat{b}(\theta):=(1-\theta)^{\bar{p}}\theta^{\bar{q}}f(\theta),\quad L(\theta):=(1-\theta)^p\theta^q|F(\theta)|.$$
Clearly, since $\mathbb{B}(\theta)$, $\theta\in\Theta$, are isometries, we have
$$\onorma{\wA(\theta)}=(1-\theta)^p\theta^q|F(\theta)|=L(\theta),\quad \onorma{\wA(\theta)^{-1}}=1/L(\theta)\quad\text{for}\quad\theta\in\Theta,$$
which ensures that conditions \eqref{eq:cnd_L0} and \eqref{eq:cnd_L1} are fulfilled.

Given the upper bound $F$, we further get
\begin{align*}
\int_{\Theta} \ln L(\theta)\,d\theta&=p \int_0^1\ln(1-\theta)\,d\theta
+q \int_0^1 \ln(\theta)\,d\theta+\int_0^1 \ln |F(\theta)|\,d\theta\\
&<-p-q+p+q=0,
\end{align*}
which yields the second part of \eqref{eq:cnd_L2}. 

Next, recall that
$$\int_0^1 (1-\theta)^r \theta^s d\theta =\mathrm{B} (s+1,r+1)<\infty \quad\text{for any}\quad s,r>-1,$$
where $\mathrm{B}$ denotes the beta function. Using this, together with the bounds on $F$ and $f$, we deduce the first part of \eqref{eq:cnd_L2} and conditions \eqref{eq:cnd_L3} as follows:
$$\int_{\Theta} L(\theta)\,d\theta\leq e^{p+q} \int_0^1 (1-\theta)^p\theta^q \,d\theta=e^{p+q}\mathrm{B} (p+1,q+1),$$
$$
\widehat{\alpha}=\int_0^1 \frac{d\theta}{|L(\theta)|}\leq \frac{1}{m_F}\int_0^1 (1-\theta)^{-p}\theta^{-q}\,d\theta=\frac{\mathrm{B}(1-p,1-q)}{m_F},
$$
$$
\widehat{\beta}=\int_0^1 \frac{\norma{\widehat{b}(\theta)}}{|L(\theta)|}\,d\theta\leq \frac{M_f}{m_F}\int_0^1 (1-\theta)^{\bar{p}-p}\theta^{\bar{q}-q}\,d\theta=\frac{M_f\mathrm{B}(\bar{p}-p+1,\bar{q}-q+1)}{m_F},
$$
$$
\widehat{\delta}=\int_0^1 \frac{L(\theta)}{L(\theta)}\,d\theta=1,
$$
$$
\widehat{\gamma}=\int_0^1 \norma{\widehat{b}(\theta)}\,d\theta\leq M_f\int_0^1 (1-\theta)^{\bar{p}}\theta^{\bar{q}}\,d\theta=M_f\mathrm{B}(\bar{p}+1,\bar{q}+1).
$$
\end{example}

Let us conclude the discussion so far by presenting two concrete examples that fall within the framework of Example \ref{example:1}.

\begin{example}
Let $\{a_n\}_{n\in\n}$ be an arbitrary sequence of positive numbers, and take
$$(\Theta_n,\mathcal{A}_n,\vartheta_n)=\left((0,1/a_n),\, \mathcal{B}((0,1/a_n)),\, a_n\lambda|_{\mathcal{B}((0,1/a_n))}\right),$$ where $\lambda$ denotes the Lebesgue measure. Consider the transformations of the form 
\begin{equation}\label{ex1:S}
S_{\theta}^{(n)}(x):=(1-sa_n\theta)x+r^n\mathcal{D}(\theta)f(a_n\theta)\quad\text{for} \quad x\in X,\; \theta\in\Theta_n,\; n\in\n,
\end{equation}
where $s\in (0,1)$, $r\in\mathbb{R}$, and $\mathcal{D}:(0,\infty)\to \mathbb{R}$, $f:(0,1)\to X$ are any bounded measurable functions.

Note the the transformations defined above fall within the class discussed in Example~\ref{example:1}. Indeed, with
$$(\Theta,\mathcal{A},\vartheta)=\left((0,1),\, \mathcal{B}((0,1)),\, \lambda|_{\mathcal{B}((0,1))}\right),$$
the maps $T_n:\Theta_n\to \Theta$, $n\in\n$, defined by $T_n(\theta):=a_n\theta$ for $\theta\in\Theta_n$, are $\mathcal{B}(\Theta_n)/\mathcal{B}(\Theta)$-measurable and satisfy $\vartheta_n(T_n^{-1}(d\theta))=\vartheta(d\theta)$. Moreover, $S_{\theta}^{(n)}$ can be expressed in the form \eqref{e:af_Tn} with $\wA$, $\widehat{b}$, $\{\mathcal{C}_n\}_{n\in\n}$, and $\{\mathcal{D}_n\}_{n\in\n}$  given~by
$$\wA(\theta)=(1-s\theta)\operatorname{id}_X,\quad \widehat{b}(\theta)=f(\theta)\quad\text{for}\quad \theta\in\Theta,$$
$$\mathcal{C}_n(\theta)=1, \quad\text{and}\quad \mathcal{D}_n(\theta)=r^n\mathcal{D}(\theta)\quad\text{for}\quad \theta\in\Theta_n,\;n\in\n,$$
so that \eqref{eq:cnd_L4} holds with
$$c_n=1\quad \text{and}\quad d_n=D|r|^n \quad\text{for}\quad n\in\n, \quad\text{where}\quad D:=\sup_{\theta\in (0,\infty)} \mathcal{D}(\theta).$$
Furthermore, the mappings $\wA,\widehat{b}$, and $L:=1-s\operatorname{id}_{\Theta}$ belong to the family introduced in Example~\ref{example:2} with $p=q=\bar{p}=\bar{q}=0$, $F=L$, $\mathbb{B}(\theta)=\operatorname{id}_X$ for $\theta\in\Theta$, and the given $f$, since \hbox{$0<1-s\leq F(\theta)<1=e^{0}$} for all~$\theta\in\Theta$, and $f$ is bounded. As established previously, such functions satisfy conditions \eqref{eq:cnd_L0}-\eqref{eq:cnd_L3} from Example~\ref{example:1}, and
$$\widehat{\alpha}=\int_{\Theta} \frac{d\theta}{|L(\theta)|}=\int_0^1 \frac{d\theta}{1-s\theta}=-\frac{\ln(1-s)}{s}.$$

Consequently, in view of the analysis in Example~\ref{example:1}, the hypotheses of Proposition~\ref{prop:affine_general} (or rather Corollary~\ref{prop:affine}) are satisfied in this setting whenever $\widehat{\alpha}|r|<1$, which is equivalent~to
\begin{equation}\label{e:sr}
|r|<-\frac{s}{\ln(1-s)}.
\end{equation}
More precisely, \eqref{e:sr} implies \eqref{ex:sum3}, which in turn yields \eqref{ex:sum}. In addition, we see that
$$\widehat{R}=\int_{\Theta} L(\theta)\,d\theta=\int_0^1 (1-s\theta)\,d\theta=1-\frac{s}{2}<1.$$

We have therefore shown that the Markov chain $\Phi^{\bullet}$ defined via $S_{\theta}^{(n)}$ in \eqref{ex1:S} admits a~geometrically attracting measure with a finite first moment whenever \eqref{e:sr} holds, which is the case, in particular, when $|r|\leq 1-s$ (since $\widehat{\alpha}< (1-s)^{-1}$). 
\end{example}

\begin{example}
Define $(\Theta,\mathcal{A},\vartheta)$ as in the previous example, and let \hbox{$T_n:\Theta\to \Theta$}, $n\in\n$, be any sequence of measurable transformations that preserve the Lebesgue measure on $(0,1)$; e.g., one can take $T_n:=T^n$, $n\in\n$, where $T$ is the (doubling) baker's map, given by
$$T(\theta)=
\begin{cases}
2\theta, &\text{for}\quad \theta\in (0,1/2),\\
2\theta-1 &\text{for} \quad \theta\in [1/2,1).
\end{cases}
$$
Moreover, let $Q(\theta)$, $\theta\in\Theta$, be arbitrary real orthogonal $N\times N$ matrices such that $\theta\mapsto Q(\theta)$ is $\mathcal{A}/\mathcal{B}(M_N)$-measurable. Given this, on $X=\mathbb{R}^N$, consider the transformations
$$S_{\theta}^{(n)}(x):=a\left(1-\frac{1}{n^s\,\mathcal{C}(\theta)}\right)(1-T_n(\theta))^{1/2}Q(T_n(\theta))x+r^n\mathcal{D}(\theta)f(T_n(\theta)),\;\; x\in X,\; \theta\in\Theta,\; n\in\n,$$
where $0<|a|<\sqrt{e}$, $s\geq 0$, $r\in \mathbb{R}$, and $f:\Theta\to X$, $\mathcal{C},\mathcal{D}:\Theta\to\mathbb{R}$ are any measurable functions such that~$f,\mathcal{D}$ are bounded, and $C:=\inf_{\theta\in\Theta}\mathcal{C}(\theta)>1$.

Observe that each $S_{\theta}^{(n)}$ is of the form \eqref{e:af_Tn} with $\wA$, $\widehat{b}$, $\{\mathcal{C}_n\}_{n\in\n}$, and $\{\mathcal{D}_n\}_{n\in\n}$ defined by
$$\wA(\theta)(x)=a(1-\theta)^{1/2}Q(\theta)x,\quad \widehat{b}(\theta)=f(\theta)\quad\text{for}\quad\theta\in\Theta,$$
$$\mathcal{C}_n(\theta):=1-\frac{1}{n^s\mathcal{C}(\theta)},\quad 
\mathcal{D}_n(\theta)=r^n\mathcal{D}(\theta)\quad\text{for}\quad \theta\in\Theta,\; n\in\n,$$
where $\mathcal{C}_n$ and $\mathcal{D}_n$ satisfy \eqref{eq:cnd_L4} with
$$c_n:=1-\frac{1}{n^sC},\quad d_n=D|r|^n \quad\text{for}\quad n\in\n,\quad\text{where}\quad D:=\sup_{\theta\in \Theta} |\mathcal{D}(\theta)|.$$
In addition, the mappings $\wA,\widehat{b}$, and $L:=|a|(1-\operatorname{id}_{\Theta})^{1/2}$ belong to the class introduced in \hbox{Example~\ref{example:2}} (with $p=1/2$, $q=\bar{p}=\bar{q}=0$, $F\equiv a$, $\mathbb{B}(\theta)(x)=Q(\theta)x$, and the given $f$), and thereby satisfy \eqref{eq:cnd_L0}-\eqref{eq:cnd_L3}. Furthermore,
$$\widehat{\alpha}=\int_{\Theta}\frac{d\theta}{|L(\theta)|}=\frac{1}{|a|}\int_0^1 \frac{d\theta}{\sqrt{1-\theta}}=\frac{2}{|a|}\quad\text{and}\quad \underline{c}:=\inf_{n\in\n} c_n=1-\frac{1}{C}>0.$$

Relying on condition \eqref{ex:sum}, we now verify when Proposition~\ref{prop:affine_general} (or more precisely, Corollary \ref{prop:affine}) can be applied in this setting, depending on $s$ and $r$. First of all, wee see that
$$\sum_{n=1}^{\infty} \widehat{\alpha}^n d_n =D\sum_{n=1}^{\infty} \left(\frac{2|r|}{|a|}\right)^n<\infty\;\;\text{iff}\;\; |r|<\frac{|a|}{2},$$
which means that $|r|<|a|/2$ is necessary for \eqref{ex:sum} to hold. Next, note that
$$\sum_{n=1}^{\infty}\left(1-c_n\right)=\frac{1}{C}\sum_{n=1}^{\infty}\frac{1}{n^s}<\infty\;\;\;\text{iff}\;\;\;s>1,$$
$$\sum_{n=1}^{\infty} \left(\frac{\widehat{\alpha}}{\underline{c}}\right)^n d_n=D\sum_{n=1}^{\infty}\left(\frac{2|r|}{|a|(1-1/C)}\right)^n<\infty\;\;\;\text{iff}\;\;\; 
|r|<\frac{|a|}{2}\left(1-\frac{1}{C}\right).$$
Hence, in the case where $s>1$, condition \eqref{ex:sum} is ensured by \eqref{ex:sum3} whenever $|r|<|a|/2$. If~$s\in [0,1]$, then \eqref{ex:sum3} fails, but one can use instead  \eqref{ex:sum1} (which also yields \eqref{ex:sum}), provided that \hbox{$|r|<|a|(1-1/C)/2$}. Consequently, by the discussion in Example~\ref{example:1}, we infer that the hypotheses of Proposition~\ref{prop:affine_general} are fulfilled whenever either $s>1$ and $|r|<|a|/2$, or $0\leq s\leq 1$ and $|r|<|a|(1-1/C)/2$. What is more, 
$$\widehat{R}=\int_{\Theta} L(\theta)\,d\theta=|a|\int_0^1 \sqrt{1-\theta}\,d\theta=\frac{2|a|}{3},$$
which shows that the geometrically attracting measure $\pi$ of the chain $\Phi^{\bullet}$, existing in these cases, belongs to $\mathcal{M}_{1,1}(X)$ whenever $|a|<3/2$.
\end{example}

Regarding backward iterates, an interpretation of Theorem \ref{thm:main2} (supplemented with Proposition \ref{cor:finite_moment}) in the affine setting is immediate and can be stated as follows.

\begin{proposition}\label{prop:affine_general2}
Let \hbox{$\mathbb{A}_n:\Theta_n\to X^X$}, \hbox{$b_n:\Theta_n\to X$}, and $L_n:\Theta_n\to (0,\infty)$, $n\in\n$, be such that, for every $n\in\n$, the functions $\A{n}{\theta}:X\to X$, $\theta\in\Theta_n$, $n\in\n$, are continuous linear operators, and the mappings $b_n$, $L_n$, and $X\times \Theta_n \ni (x,\theta) \mapsto \A{n}{\theta}(x)\in X$ are measurable. Furthermore, assume that conditions \ref{cnd:E1} and \ref{cnd:E2} of Proposition \ref{prop:affine_general} hold, and that, for some $n_0\in\n$,
$$\sup_{n\geq n_0} \int_{\Theta_n} \norma{b_n(\theta)}\,\vartheta_n(d\theta)<\infty.$$
Then the conclusion of Theorem \ref{thm:main2} is valid for the process $\Psi^{\bullet}$ defined in \eqref{e:model_def_rev} via the affine transformations of the form \eqref{def:iso}. Moreover, the attracting measure $\bar{\pi}$ satisfying \eqref{e:main2} belongs to $\mathcal{M}_{1,1}(X)$ whenever $n_0=1$ and $R<1$ (where $R$ is defined in \emph{\ref{cnd:E2}}).
\end{proposition}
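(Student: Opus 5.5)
The plan is to reduce Proposition~\ref{prop:affine_general2} to Theorem~\ref{thm:main2} together with Proposition~\ref{cor:finite_moment}. That means checking hypotheses \ref{cnd:A1}, \ref{cnd:A2} and \ref{cnd:B3} for the affine maps $S_{\theta}^{(n)}(x)=\A{n}{\theta}(x)+b_n(\theta)$ on the separable Banach space $(X,\norma{\cdot})$, taking $\rho(x,y)=\norma{x-y}$ and $x_0=0$.

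First, the measurability of \eqref{e:measurabilty}. The map $(x,\theta)\mapsto \A{n}{\theta}(x)+b_n(\theta)$ is the sum of two measurable $X$-valued maps, one being measurable by assumption and the other being $b_n$ composed with the projection onto $\theta$. Addition $X\times X\to X$ is continuous, and $X$ is separable, so $\mathcal{B}(X\times X)=\mathcal{B}(X)\otimes\mathcal{B}(X)$. Hence the sum is $\mathcal{B}(X)\otimes\mathcal{A}_n$-measurable. This separability point is the only slightly delicate step, and it is where the hypothesis on $X$ is used.

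Next, \ref{cnd:A1} follows exactly as in the proof of Proposition~\ref{prop:affine_general}:
$$\norma{S_{\theta}^{(n)}(x)-S_{\theta}^{(n)}(y)}=\norma{\A{n}{\theta}(x-y)}\leq\onorma{\A{n}{\theta}}\norma{x-y}\leq L_n(\theta)\norma{x-y},$$
using \ref{cnd:E1}. Condition \ref{cnd:A2} is literally \ref{cnd:E2}. For \ref{cnd:B3} we have $S_{\theta}^{(n)}(0)=b_n(\theta)$, so
$$\Delta=\sup_{n\geq n_0}\int_{\Theta_n}\norma{b_n(\theta)}\,\vartheta_n(d\theta)<\infty$$
by assumption. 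Invertibility of $\A{n}{\theta}$ is never needed, which is consistent with the weaker hypotheses here compared with Proposition~\ref{prop:affine_general}. Theorem~\ref{thm:main2} then yields $\bar{\pi}$ satisfying \eqref{e:main2}.

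For the moment statement, assume $n_0=1$ and $R<1$. All hypotheses of Theorem~\ref{thm:main2} then hold with $n_0=1$, so Proposition~\ref{cor:finite_moment} gives $\bar{\pi}\in\mathcal{M}_{1,1}(X)$. If one prefers to argue directly, note that Proposition~\ref{cor:finite_moment} rests on Proposition~\ref{prop:m_11} applied to $\bar{P}^{(n)}$. The drift estimate from the proof of Proposition~\ref{prop:finite_moment} gives $P_nV\leq RV+\Delta$ with $V=\norma{\cdot}$, and $\delta_x\in\mathcal{M}_{1,1}(X)$ has $\<V,\delta_x\><\infty$ together with $\delta_x\bar{P}^{(n)}\stackrel{w}{\to}\bar{\pi}$. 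This again yields $\<V,\bar{\pi}\><\infty$.
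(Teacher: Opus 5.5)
Your proposal is correct and follows the route the paper intends; the paper gives no separate proof and calls this specialization immediate. With $x_0=0$, \ref{cnd:E1}--\ref{cnd:E2} yield \ref{cnd:A1}--\ref{cnd:A2}, the identity $S_{\theta}^{(n)}(0)=b_n(\theta)$ yields \ref{cnd:B3}, and Proposition~\ref{cor:finite_moment} gives the moment claim when $n_0=1$ and $R<1$. Your separability argument for the product measurability of $(x,\theta)\mapsto S_{\theta}^{(n)}(x)$ checks a detail the paper leaves implicit.
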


As can be seen, the assumptions in this case are substantially weaker than those in Proposition~\ref{prop:affine_general}, which allows one to identify a~much broader class of admissible families of transformations. For instance, building on Example \ref{example:1}, we may characterize the following class:

\begin{example}
Let $(\Theta,\mathcal{A},\vartheta)$ and $T_n:\Theta_n\to \Theta$, $n\in\n$, be such as in Example \ref{example:1}. Further, suppose that we are given measurable functions $\widehat{b}:\Theta\to X$, $L:\Theta\to (0,\infty)$, and continuous linear operators $\mathbb{A}_n(\theta):X \to X$, $\theta\in\Theta$, such the mapping $X\times \Theta \ni (x,\theta) \mapsto \wA(\theta)(x)\in X$ is measurable, conditions \eqref{eq:cnd_L1}, \eqref{eq:cnd_L2} hold, and $$\int_{\Theta}\|\widehat{b}(\theta)\|\,\vartheta(d\theta)<\infty.$$ Additionally, let $\mathcal{C}_n, \mathcal{D}_n:\Theta_n\to \mathbb{K}$, $n\in\n$, be any measurable functions such that
$$|\mathcal{C}_n(\theta)|\leq 1\quad\text{and}\quad |\mathcal{D}_n(\theta)|\leq D \quad\text{for all}\quad \theta\in\Theta_n,\;n\in\n,$$
with some $D\geq 0$. Then the conclusion of Proposition \ref{prop:affine_general2} is valid for the process $\Psi^{\bullet}$ defined via the transformation in \eqref{e:af_Tn} with $R=\widehat{R}$, where $\widehat{R}$ is defined in \eqref{eq:cnd_L2}.
\end{example}

\section{Proofs of the main results}\label{sec:5}
In this part of the paper, we will prove Theorems \ref{thm:main} and \ref{thm:main2}. To do so, we shall first present some auxiliary results.

\begin{lemma}\label{lem:composition}
Let $T_1,T_2,\ldots:X\to X$ be Lipschitz continuous transformations with constants $L_1,L_2,\ldots$, respectively, and define
$$\bmathcal{T}_0=\mathcal{T}_0:=\operatorname{id}_X,\quad\mathcal{\bmathcal{T}}_n:=T_1\circ\ldots\circ T_n,\quad\text{and}\quad \mathcal{\mathcal{T}}_n:=T_n\circ\ldots\circ T_1\quad\text{for}\quad n\in\n.$$
 Then, for any $n\geq 2$ and $x\in X$,
\begin{equation}\label{e:comp1}
\rho(\bmathcal{T}_{n-1}(x),\, \bmathcal{T}_n(x))\leq \rho\left(x,T_n(x)\right)\prod_{j=1}^{n-1} L_j.
\end{equation}
Moreover, if $T_1,T_2,\ldots$ are also invertible, then for every $n\geq 2$,
\begin{equation}\label{e:comp2}
\rho(\mathcal{T}_{n-1}(x),\, \mathcal{T}_n(x))\leq 
\rho\left(x, (\mathcal{T}_{n-1}^{-1}\circ T_n\circ \mathcal{T}_{n-1})(x)\right) \prod_{j=1}^{n-1} L_j.
\end{equation}
\end{lemma}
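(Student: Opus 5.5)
Both estimates follow from writing the two consecutive compositions as a common Lipschitz map applied to two different points, and then using the Lipschitz constants.

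For \eqref{e:comp1}, note that $\bmathcal{T}_n=\bmathcal{T}_{n-1}\circ T_n$. Hence
$$\rho(\bmathcal{T}_{n-1}(x),\bmathcal{T}_n(x))=\rho\big(\bmathcal{T}_{n-1}(x),\,\bmathcal{T}_{n-1}(T_n(x))\big).$$
The map $\bmathcal{T}_{n-1}=T_1\circ\ldots\circ T_{n-1}$ is Lipschitz with constant at most $\prod_{j=1}^{n-1}L_j$, since Lipschitz constants are submultiplicative under composition. This fact is checked by a one-line induction on the number of factors. Applying it to the points $x$ and $T_n(x)$ gives \eqref{e:comp1} immediately.

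For \eqref{e:comp2}, the analogous factorization $\mathcal{T}_n=T_n\circ\mathcal{T}_{n-1}$ puts the new map on the wrong side. We move it to the right by conjugation. Invertibility of $T_1,\ldots,T_{n-1}$ makes $\mathcal{T}_{n-1}$ a bijection with inverse $T_1^{-1}\circ\ldots\circ T_{n-1}^{-1}$, so
$$\mathcal{T}_n=T_n\circ\mathcal{T}_{n-1}=\mathcal{T}_{n-1}\circ\big(\mathcal{T}_{n-1}^{-1}\circ T_n\circ\mathcal{T}_{n-1}\big).$$
Setting $H:=\mathcal{T}_{n-1}^{-1}\circ T_n\circ\mathcal{T}_{n-1}$, we obtain
$$\rho(\mathcal{T}_{n-1}(x),\mathcal{T}_n(x))=\rho\big(\mathcal{T}_{n-1}(x),\,\mathcal{T}_{n-1}(H(x))\big)\leq\Big(\prod_{j=1}^{n-1}L_j\Big)\rho(x,H(x)).$$
Here we use again that $\mathcal{T}_{n-1}=T_{n-1}\circ\ldots\circ T_1$ has Lipschitz constant at most $\prod_{j=1}^{n-1}L_j$.

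There is no real obstacle. The only point needing care is the identity $\mathcal{T}_{n-1}\circ\mathcal{T}_{n-1}^{-1}=\operatorname{id}_X$. This requires $\mathcal{T}_{n-1}$ to be a genuine bijection, which holds because each $T_j$ is invertible. No Lipschitz property of the inverse is needed, since only $\mathcal{T}_{n-1}$ itself is estimated.
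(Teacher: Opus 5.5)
Your proof is correct and is essentially the paper's argument. The paper handles \eqref{e:comp1} identically; for \eqref{e:comp2} it proves, by peeling off one $T_m$ at a time, the bound $\rho(\mathcal{T}_k(x),F(x))\leq\rho\big(x,(\mathcal{T}_k^{-1}\circ F)(x)\big)\prod_{j=1}^k L_j$ for arbitrary $F$ and then takes $F=\mathcal{T}_n$, $k=n-1$, which is exactly your conjugation identity $\mathcal{T}_n=\mathcal{T}_{n-1}\circ H$ combined with the Lipschitz constant of $\mathcal{T}_{n-1}$, just written out step by step rather than in one go.
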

\begin{proof}
Let $n\geq 2 $ and $x\in X$. 

It easily seen that $\bmathcal{T}_{n-1}$ is Lipschitz continuous with constant~$\prod_{j=1}^{n-1} L_j$. Hence,
$$
\rho(\bmathcal{T}_{n-1}(x),\, \bmathcal{T}_n(x))=\rho(\bmathcal{T}_{n-1}(x),\, \bmathcal{T}_{n-1}(T_n(x)))\leq \rho(x,T_n(x))\prod_{j=1}^{n-1} L_j,
$$
which yields \eqref{e:comp1}. 

Now assume that $T_1, T_2,\ldots$ are invertible. To establish \eqref{e:comp2}, we first note that for any $F:X\to X$ and $m\in\n$,
\begin{align}
\begin{split}\label{e:indu0}
\rho(\mathcal{T}_m(x),\, F(x))&=\rho\left(T_m\left(\mathcal{T}_{m-1}(x)\right),\, T_m\left((T_m^{-1}\circ F)(x)\right)\right)\\
&\leq \rho\left(\mathcal{T}_{m-1}(x),\, \left(T_m^{-1}\circ F\right)(x)\right)L_m.
\end{split}
\end{align}
By induction, this implies that, for every $k\in\n$,
\begin{equation}\label{e:indu}
\rho(\mathcal{T}_k(x),\, F(x))\leq \rho\left(x,\, (\mathcal{T}_k^{-1}\circ F )(x)\right) \prod_{j=1}^k L_j \quad \text{with any}\quad F:X\to X.
\end{equation}
Indeed, for $k=1$, the inequality follows immediately from \eqref{e:indu0} (applied with $m=1$). Suppose that \eqref{e:indu} is satisfied for some arbitrarily fixed $k\in\n$. Then, applying \eqref{e:indu0} with $m=k+1$ and the inductive hypothesis with $T_{k+1}^{-1}\circ F$ in place of $F$, we obtain
\begin{align*}
\rho\left(\mathcal{T}_{k+1}(x),\, F(x)\right)&\leq \rho\left(\mathcal{T}_k(x),\,(T_{k+1}^{-1}\circ F)(x) \right)L_{k+1}
\\&\leq \rho\left(x,\,(\mathcal{T}_k^{-1}\circ T_{k+1}^{-1}\circ F)(x) \right)\left(\prod_{j=1}^k L_j\right)L_{k+1}= \rho\left(x,\,(\mathcal{T}_{k+1}^{-1}\circ F)(x) \right)\prod_{j=1}^{k+1} L_j.
\end{align*}
Finally, taking $k=n-1$ and $F=\mathcal{T}_n\,(=T_n\circ\mathcal{T}_{n-1})$ in \eqref{e:indu} completes the proof.
\end{proof}

The next result offers a~simple observation concerning an estimation of the bounded Lipschitz distance.
\begin{lemma}\label{lem:rand_var_dist}
Let $\xi$ and $\zeta$ be any two $X$-valued random variables with distributions $\mu_{\xi}$ and~$\mu_{\zeta}$, respectively, such that
$$\pr(\rho(\xi,\, \zeta)>\alpha)\leq \beta$$
for certain $\alpha, \beta\in\mathbb{R}_+$. Then
$$d_{BL}(\mu_{\xi},\, \mu_{\eta})\leq \alpha+2\beta.$$
\end{lemma}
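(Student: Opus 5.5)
The plan is a direct computation with an arbitrary $f\in L_{b,1}(X)$; the statement contains a typo, $\mu_{\eta}$ should read $\mu_{\zeta}$. Fix such an $f$. By the change-of-variables formula,
$$\<f,\mu_{\xi}\>-\<f,\mu_{\zeta}\>=\ew\left[f(\xi)-f(\zeta)\right],$$
so it suffices to bound $\ew|f(\xi)-f(\zeta)|$ by $\alpha+2\beta$ uniformly in $f$. Both expectations are finite because $f$ is bounded. The random variable $\rho(\xi,\zeta)$ is measurable, since $X$ is separable and hence $\rho$ is $\mathcal{B}(X)\otimes\mathcal{B}(X)$-measurable.

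Next, split $\Omega$ into the event $A:=\{\rho(\xi,\zeta)\leq\alpha\}$ and its complement $A^c$.

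On $A$, the Lipschitz property with constant at most $1$ gives
$$|f(\xi)-f(\zeta)|\leq\rho(\xi,\zeta)\leq\alpha.$$
Hence the contribution of $A$ to the expectation is at most $\alpha\,\pr(A)\leq\alpha$.

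On $A^c$, we use only that $f$ takes values in $[-1,1]$, so $|f(\xi)-f(\zeta)|\leq 2$. Since $\pr(A^c)\leq\beta$ by hypothesis, this part contributes at most $2\beta$.

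Adding the two contributions gives $|\<f,\mu_\xi-\mu_\zeta\>|\leq\alpha+2\beta$. Taking the supremum over $f\in L_{b,1}(X)$ then yields the claim.

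There is no real obstacle in this argument. The only points needing care are the measurability of the event $A$, which follows from the separability of $X$, and the correct reading of the typo $\mu_\eta$ as $\mu_\zeta$.
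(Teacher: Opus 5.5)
Your proof is correct and essentially matches the paper's argument. Like the paper, you split $\Omega$ according to whether $\rho(\xi,\zeta)>\alpha$, use the $1$-Lipschitz bound on $\{\rho(\xi,\zeta)\leq\alpha\}$ and the bound $|f|\leq 1$ on its complement, and take the supremum over $f\in L_{b,1}(X)$; your reading of $\mu_\eta$ as $\mu_\zeta$ and your remark on the measurability of $\rho(\xi,\zeta)$ via separability are both appropriate.
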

\begin{proof}
Let $H:=\{\rho(\xi,\zeta)>\alpha\}$. Then, for every $f\in L_{b,1}(X)$,
\begin{align*}
|\<f,\mu_{\xi}\> - \<f,\mu_{\zeta}\>|&\leq \int_{\Omega} |f(\xi)-f(\zeta)|\,d\pr
=\int_{\Omega\backslash H} |f(\xi)-f(\zeta)|\,d\pr+\int_H |f(\xi)-f(\zeta)|\,d\pr\\
&\leq \int_{\Omega\backslash H} \rho(\xi,\zeta)\,d\pr+2\pr(H)\leq \alpha+2\beta,
\end{align*}
which implies the claim.
\end{proof}
The core argument employed in the proofs of our main results relies on \hbox{\cite[Lemma 5.2]{b:DiaconisFreedman1999}}, which we quote below. For the sake of completeness, in the \hyperref[sec:appendix]{Appendix}, we also include a~detailed proof of this lemma.

\begin{lemma}\label{lem:diaconis}
Let $\{\xi_n\}_{n\in\n}$ be a~sequence of mutually independent and identically distributed random variables with values in $[-\infty, \infty)$. Further, suppose that there exist constants $\alpha\geq 0$ and $\beta>0$ such that
\begin{equation}
\label{e:czeb_eq}
\pr(\xi_1>t)\leq \alpha e^{-\beta t} \quad \text{for all}\quad t>0.
\end{equation}
Then $\ew\,\xi_1$ exists, $-\infty\leq \ew\,\xi_1<\infty$, and for every constant $c>\ew\xi_1$ there is $r\in (0,1)$ such that
\begin{equation}
\label{e:diac_aasert}
\pr\left(\sum_{j=1}^n \xi_j >nc \right)\leq r^n\quad\text{for all}\quad n\in\n.
\end{equation}
\end{lemma}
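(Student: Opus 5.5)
The plan is the classical Cramér--Chernoff argument. The two conclusions are proved separately.

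\textbf{Existence and range of $\ew\,\xi_1$.} By \eqref{e:czeb_eq},
$$\ew\,\xi_1^+=\int_0^\infty \pr(\xi_1>t)\,dt\leq \alpha/\beta<\infty .$$
Hence $\ew\,\xi_1=\ew\,\xi_1^+-\ew\,\xi_1^-$ is well defined and lies in $[-\infty,\infty)$, where the value $-\infty$ is allowed because $\ew\,\xi_1^-$ may be infinite. Here $e^{-\infty}:=0$ and $\xi_1^-$ equals $+\infty$ on $\{\xi_1=-\infty\}$.

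\textbf{Chernoff bound.} Fix $c>\ew\,\xi_1$. For $\lambda\in(0,\beta)$ put
$$\varphi(\lambda):=\ew\, e^{\lambda(\xi_1-c)}\in[0,\infty].$$
First I show $\varphi(\lambda)<\infty$ on $(0,\beta)$. Using $e^{\lambda\xi_1}\leq 1+e^{\lambda\xi_1}\mathbbm{1}_{\{\xi_1>0\}}$ and the layer-cake formula,
$$\ew\big[e^{\lambda\xi_1}\mathbbm{1}_{\{\xi_1>0\}}\big]\leq 1+\int_0^\infty \lambda e^{\lambda t}\,\pr(\xi_1>t)\,dt\leq 1+\frac{\alpha\lambda}{\beta-\lambda}<\infty .$$
By independence and Markov's inequality, for every $n$,
$$\pr\Big(\sum_{j=1}^n\xi_j>nc\Big)\leq \ew\, e^{\lambda\sum_{j=1}^n(\xi_j-c)}=\varphi(\lambda)^n .$$
The $[-\infty,\infty)$ values cause no trouble: the exponentials are simply $0$ there. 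So it suffices to find some $\lambda\in(0,\beta)$ with $\varphi(\lambda)<1$, and then take $r:=\max\{\varphi(\lambda),1/2\}\in(0,1)$.

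\textbf{Main step: $\varphi(\lambda)<1$ for small $\lambda>0$.} Write
$$\frac{\varphi(\lambda)-1}{\lambda}=\ew\left[\frac{e^{\lambda(\xi_1-c)}-1}{\lambda}\right].$$
As $\lambda\downarrow0$ the integrand converges pointwise to $\xi_1-c\in[-\infty,\infty)$. Split into the events $\{\xi_1>c\}$ and $\{\xi_1\leq c\}$.

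On $\{\xi_1\leq c\}$ the integrand is nonpositive, and by convexity of the exponential it increases to $\xi_1-c$ as $\lambda\downarrow 0$ (so its absolute value decreases). Monotone convergence, applied to the negatives, then gives convergence of the expectations to $\ew[(\xi_1-c)\mathbbm{1}_{\{\xi_1\leq c\}}]$, possibly $-\infty$.

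On $\{\xi_1>c\}$, for $\lambda\leq\lambda_0<\beta$ the integrand is bounded by $(\xi_1-c)e^{\lambda_0(\xi_1-c)}$. This is integrable by the same tail estimate as before with $\lambda_0<\beta$, so dominated convergence applies.

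Combining the two parts,
$$\lim_{\lambda\downarrow0}\frac{\varphi(\lambda)-1}{\lambda}=\ew\,\xi_1-c<0$$
(with $-\infty$ permitted). Hence $\varphi(\lambda)<1$ for all sufficiently small $\lambda>0$.

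I expect the main obstacle to be this limit interchange, since $\xi_1$ may equal $-\infty$ with positive probability or have $\ew\,\xi_1=-\infty$. The monotone/dominated split above is designed to handle exactly that. If one prefers to avoid it, an alternative is to truncate, replacing $\xi_j$ by $\max\{\xi_j,-K\}$ with $K$ large so that the truncated mean is still below $c$. Since the truncated variables dominate the originals, a bound for them implies the bound for the $\xi_j$, and for the truncated variables the differentiation at $\lambda=0$ is routine.
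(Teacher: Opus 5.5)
Your proof is correct, and it reaches the key inequality by a different route than the paper.

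\textbf{Comparison with the paper.} Both arguments are Chernoff bounds; they differ in how one finds $\lambda\in(0,\beta)$ with $e^{-\lambda c}\,\ew\, e^{\lambda\xi_1}<1$.

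The paper first truncates, replacing $\xi_n$ by $\max\{\xi_n,-K\}$ (Lemma~\ref{lem:diac2}), so that $m=\ew\,\xi_1$ is finite and $\ew\, e^{\gamma|\xi_1|}<\infty$. It then uses the power-series estimate $|e^{\lambda\xi}-\lambda\xi-1|\leq(\lambda^2/\gamma^2)(e^{\gamma|\xi|}-\gamma|\xi|-1)$ to get the uniform quadratic bound $\ew\, e^{\lambda\xi_1}\leq e^{\lambda m+\lambda^2 d}$ on $(0,\gamma]$. Optimizing in $\lambda$ gives the explicit rate $r=e^{-(c-m)^2/(4d)}$, with $c$ capped at $c_0=m+2d\gamma$.

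You instead compute the right derivative of $\varphi$ at $0$ directly. Splitting at $\{\xi_1\leq c\}$ lets monotone convergence absorb the possibly infinite negative part, including an atom at $-\infty$. This makes truncation unnecessary and handles $\ew\,\xi_1=-\infty$ directly, and it is shorter. The price is that it is purely qualitative: you get the existence of some $r$, but no formula in terms of $c$, $m$ and exponential moments. Your layer-cake bound $\ew\,\xi_1^+\leq\alpha/\beta$ is also a slightly cleaner way than Lemma~\ref{lem:diac1} to see that $\ew\,\xi_1$ exists.

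\textbf{A correction in the main step.} On $\{\xi_1\leq c\}$ the monotonicity goes the other way from what you wrote. For $x\leq 0$, $(e^{\lambda x}-1)/\lambda$ is the slope of a secant through $0$ of the convex function $\lambda\mapsto e^{\lambda x}$. It therefore \emph{decreases} to $x$ as $\lambda\downarrow 0$, and its absolute value $(1-e^{\lambda x})/\lambda$ \emph{increases} to $|x|$. Your direction cannot be right: if the absolute values decreased, they would stay bounded by $1/\lambda_0$, and the limit $-\infty$ you allow would be impossible.

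With the correct direction, the negatives form a nonnegative increasing family, so the monotone convergence theorem applies exactly as you intend and your conclusion stands. The same monotonicity on $\{\xi_1>c\}$ gives a simpler dominator: the value at $\lambda_0$ itself, $(e^{\lambda_0(\xi_1-c)}-1)/\lambda_0$, which is integrable because $\varphi(\lambda_0)<\infty$. This avoids having to absorb the linear factor in $(\xi_1-c)e^{\lambda_0(\xi_1-c)}$.
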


\begin{remark}\label{rem:czeb}
It follows directly form the Chebyshev inequality that, if $\bar{\xi}$ is a~random variable with positive values and $\ew\bar{\xi}<\infty$, then \eqref{e:czeb_eq} holds for $\xi_1=\ln\,\bar{\xi}$ with $\alpha=\ew\,\bar{\xi}$ and $\beta=1$.
\end{remark}

We are now ready to prove the main results of Sections \ref{sec:2} and \ref{sec:3}.

\begin{proof}[Proof of Theorem \ref{thm:main}]
For convenience of notation, if $n_0>1$, we additionally set $$M_n(\theta_1,\ldots,\theta_n):=\delta\quad\text{for}\quad  \theta_1\in\Theta_1,\ldots,\theta_n\in\Theta_n,\;\, n\in\{1,\ldots,n_0-1\}.$$

Consider the random variables
\begin{gather*}
\bar{\xi}_n:=L_n(\eta_n),\quad \xi_n:=\ln \bar{\xi_n},\quad \text{and}\quad \kappa_n:=M_n(\eta_1,\ldots,\eta_n)\quad\text{for}\quad n\in\n,
\end{gather*}
with $\{\eta_n\}_{n\in\n}$ defined as in \eqref{df:eta}. In view of \ref{cnd:A2}, it is clear that  $\{\bar{\xi}_n\}_{n\in\n}$ and $\{\xi_n\}_{n\in\n}$ are sequences of mutually independent and identically distributed random variables, and that $\ew\bar{\xi}_1$ is finite. According to Remark~\ref{rem:czeb}, the latter implies \eqref{e:czeb_eq}, which, in turn, guarantees that  $\{\xi_n\}_{n\in\n}$ fulfills the assumptions of Lemma~\ref{lem:diaconis}. This, in particular, ensures that $\ew \xi_1$ is well-defined (though not necessarily finite), and, due to \ref{cnd:A2}, we have $\ew \xi_1<0$. Moreover, from \ref{cnd:A4} it follows that $0\leq \sup_{n\in\n} \ew\kappa_n= \delta<\infty$.

Let us now fix $\gamma\in (0,-\ew\,\xi_1)$ and $s\in (1,e^{\gamma})$. Then, by virtue of Lemma~\ref{lem:diaconis} (applied to the sequence $\{\xi_n\}_{n\in\n}$ with~$c=-\gamma$), there exists $r\in (0,1)$ such that
\begin{equation}\label{e:proof1}
\pr\left(\sum_{j=1}^k \xi_j >-k\gamma\right)\leq r^k\quad \text{for all}\quad k\in\n.
\end{equation}

Further, for every $k\in\n$, define the events
$$
A_k:=\left\{\prod_{j=1}^k \bar{\xi}_j>e^{-k\gamma} \right\}=\left\{ \omega=(\theta_1,\theta_2,\ldots)\in\Omega:\; \prod_{j=1}^k L_j(\theta_j)>e^{-k\gamma}\right\},\vspace{-0.2cm}
$$

$$B_k:=\left\{\kappa_k>s^k\right\}=\left\{ \omega=(\theta_1,\theta_2,\ldots)\in\Omega:\; M_k(\theta_1,\ldots,\theta_k)>s^k\right\},
$$
and note that 
\begin{equation}\label{e:proof2}
\pr(A_k)\leq r^k,\quad \quad \pr(B_k)\leq \delta s^{-k}\quad\text{for all}\quad k\in\n.
\end{equation}
Indeed, the first inequality follows from \eqref{e:proof1}, since
$$
\pr(A_k)=\pr\left(\ln\left( \prod_{j=1}^k \bar{\xi}_j\right)>-k\gamma\right)=\pr\left(\sum_{j=1}^k \xi_j>-k\gamma \right)\leq r^k\quad\text{for}\quad k\in\n,$$
while the second one is an immediate consequence of the Chebyshev inequality. 

Referring to condition \ref{cnd:A1}, for every $k\in\n$, one may find~$\bar{N}_k\in\mathcal{A}_k$ with $\vartheta_k(\bar{N}_k)=0$ such that, for any $\theta\in \Theta_k\backslash \bar{N}_k$, the map $S_{\theta}^{(k)}$ is Lipschitz continuous with constant $L_k(\theta)$. Moreover, assumption  \ref{cnd:A3} implies that, for every $k\geq n_0$, there exists $\tilde{N}_k\in\bigotimes_{j=1}^k\mathcal{A}_j$ with $\bigotimes_{j=1}^k \vartheta_j(\tilde{N}_k)=0$ such that, for any $(\theta_1,\ldots,\theta_k)\in (\prod_{j=1}^k\Theta_j) \backslash \tilde{N}_k$, the transformations $S_{\theta_1}^{(1)},\ldots,S_{\theta_k}^{(k)}$ are invertible, and
\begin{equation}\label{e:A3}
\rho\left(x_0,\, \left(\mathcal{S}_{\theta_1,\dots,\theta_{k-1}}^{-1}\circ S_{\theta_k}^{(k)} \circ \mathcal{S}_{\theta_1,\dots,\theta_{k-1}}\right) (x_0)\right)\leq M_k(\theta_1,\ldots,\theta_k).
\end{equation}
Given this, for every $k\in\n$, define
$$\bar{N}_k^{\infty}:=\prod_{j=1}^{\infty} E_j \;\;\text{with} \;\; E_k=\bar{N}_k\;\;\text{and}\;\; E_j=\Theta_j \;\;\text{for}\;\;j\neq k,\quad \tilde{N}_k^{\infty}:=\tilde{N}_{k+n_0-1} \times \prod_{j=k+n_0}^{\infty}\Theta_j,$$ and set $N:=\bigcup_{k=1}^{\infty} \bar{N}_k^{\infty}\cup \tilde{N}_k^{\infty}$. Then $N\in\mathcal{F}$, $\pr(N)=0$, and it clear that, if $(\theta_1,\theta_2,\ldots)\in\Omega\backslash N$, then\vspace{-0.2cm}
\begin{equation}\label{e:lip_inv}
S_{\theta_k}^{(k)}\; \text{is both}\; L_k(\theta_k)\text{-Lipschitz continuous and invertible for every}\;k\in\n,
\end{equation}
and \eqref{e:A3} holds for all $k\geq n_0$.

Let us now introduce
$$C_n:=N\cup \bigcup_{k=n}^{\infty}\left (A_k\cup B_k\right) \quad \text{for}\quad n\in\n,$$
which are obviously $\mathcal{F}$-measurable. Appealing to \eqref{e:proof2}, we see that
$$
\pr(C_n)\leq  \sum_{k=n}^{\infty}\pr(A_k) + \sum_{k=n}^{\infty}\pr(B_k)
\leq \sum_{k=n}^{\infty}r^k+\delta\sum_{k=n}^{\infty}\left(\frac{1}{s}\right)^k=\frac{r^n}{1-r}+\frac{\delta s}{s-1}\left(\frac{1}{s}\right)^n.
$$
Thus, there exist $r_1\in(0,1)$ and $c_1>0$ such that
\begin{equation}\label{e:proof3}
\pr(C_n)\leq c_1r_1^n\quad\text{for all}\quad n\in\n.
\end{equation}

We will show that there exist $r_2\in (0,1)$ and $c_2>0$ such that
\begin{align}
\begin{split}\label{e:proof4}
&\rho\big(\Phi_n^{x}(\omega), \Phi_n^{x_0}(\omega)\big)\leq r_2^n V(x)\quad \text{and}\quad   \rho\left(\Phi_n^{x_0}(\omega), \Phi_{n+m}^{x_0}(\omega)\right)\leq c_2r_2^n    \\[0.2cm]
&\text{for any}\;\;\,  x\in X,\; n\geq n_0,\; m\in\n,\; \omega\in \Omega\backslash C_n.
\end{split}
\end{align}
To this end, fix arbitrary $x\in X$, $n\geq n_0$, $m\in\n$, and (assuming that $\Omega\backslash C_n\neq \emptyset$) let \hbox{$\omega=(\theta_1,\theta_2,\ldots)\in \Omega\backslash C_n$}. From the definition of~$C_n$, it follows that \eqref{e:lip_inv} holds with the coordinates of $\omega$, and that
\begin{equation}\label{e:proof5}
\prod_{j=1}^k L_j(\theta_j)\leq e^{-k\gamma}\quad \text{and}\quad M_k(\theta_1,\ldots,\theta_k)\leq s^k\quad\text{for any}\quad k\geq  n.
\end{equation}
Taking into account \eqref{e:model_def_exp}, we can apply this observation as follows. The Lipschitz part of \eqref{e:lip_inv}, together with the first inequality in~\eqref{e:proof5}, gives
\begin{align*}
\rho\left(\Phi_n^{x}(\omega), \Phi_{n}^{x_0}(\omega)\right)&=\rho\left(\mathcal{S}_{\theta_1,\ldots,\theta_n}(x),\, \mathcal{S}_{\theta_1,\ldots,\theta_n}(x_0)\right)
\leq \left(\prod_{j=1}^n L_j(\theta_j)\right) \rho(x,x_0)
\\
& \leq e^{-n\gamma}V(x)\leq \left(se^{-\gamma}\right)^n V(x).
\end{align*}
Meanwhile, since the second assertion of Lemma \ref{lem:composition} holds via \eqref{e:lip_inv}, combining it with the validity of \eqref{e:A3} for $k\geq n_0$ and both inequalities in \eqref{e:proof5}, yields
\begin{align*}
\rho\left(\Phi_n^{x_0}(\omega), \Phi_{n+m}^{x_0}(\omega)\right)&=
\rho\left(\mathcal{S}_{\theta_1,\ldots,\theta_n}(x_0),\, \mathcal{S}_{\theta_1,\ldots,\theta_{n+m}}(x_0)\right)\\
&\leq \sum_{k=n+1}^{n+m} \rho\left(\mathcal{S}_{\theta_1,\ldots,\theta_{k-1}}(x_0),\, \mathcal{S}_{\theta_1,\ldots,\theta_k}(x_0)\right)\\
&\leq \sum_{k=n+1}^{n+m}\rho\left(x_0,\; \left(\mathcal{S}_{\theta_1,\ldots,\theta_{k-1}}^{-1}\circ S_{\theta_k}^{(k)}\circ \mathcal{S}_{\theta_1,\ldots,\theta_{k-1}}\right)(x_0) \right) \prod_{j=1}^{k-1}L_j(\theta_j) \\
&\leq \sum_{k=n+1}^{n+m} M_k(\theta_1,\ldots,\theta_k)\prod_{j=1}^{k-1}L_j(\theta_j)\leq \sum_{k=n+1}^{\infty} s^k e^{-(k-1)\gamma}\\
&=e^{\gamma}\sum_{k=n+1}^{\infty}(se^{-\gamma})^k=e^{\gamma}\frac{(se^{-\gamma})^{n+1} }{1-se^{-\gamma}}=\frac{s}{1-se^{-\gamma}}(se^{-\gamma})^n.
\end{align*}
We have therefore shown that \eqref{e:proof4} is satisfied with 
\begin{equation}\label{e:rc}
r_2:=se^{-\gamma}\in(0,1)\quad\text{and}\quad c_2:=s(1-se^{-\gamma})^{-1}>0.
\end{equation}

Observations \eqref{e:proof3} and \eqref{e:proof4} imply that
$$
\max\left\{ \pr\big(\rho\left(\Phi_n^{x}, \Phi_n^{x_0}\big)> r_2^n V(x) \right),\; \pr\left(\rho\left(\Phi_n^{x_0}, \Phi_{n+m}^{x_0}\right)> c_2r_2^n \right)  \right\}\leq \pr(C
_n)\leq c_1 r_1^n
$$
for all $x\in X$, $n\geq n_0$, and $m\in\n$. This, in turn, according to Lemma \ref{lem:rand_var_dist}, shows that
\begin{equation} \label{e:proof6}
d_{BL}\left(\delta_{x} P^{(n)},\; \delta_{x_0} P^{(n)} \right)\leq r_2^n V(x)+2c_1r_1^n\quad\text{for all} \quad x\in X,\;  n\geq n_0,
\end{equation}
and
\begin{equation} \label{e:proof7}
d_{BL}\left(\delta_{x_0} P^{(n)},\; \delta_{x_0} P^{(n+m)} \right)\leq c_2r_2^n +2c_1r_1^n\quad\text{for all} \quad n\geq n_0,\;m\in\n.
\end{equation}

Since $r_1,r_2\in (0,1)$, it follows from \eqref{e:proof7}  that $\{\delta_{x_0} P^{(n)} \}_{n\in\n}$ is a~Cauchy sequence in $(\mathcal{M}_1(X), d_{BL})$. Consequently, there exists a~measure $\pi\in\mathcal{M}_1(X)$ such that
$$\lim_{n\to\infty} d_{BL}\left(\delta_{x_0} P^{(n)}, \pi\right)=0.$$
Moreover, after passing to the limit as $m\to \infty$ in \eqref{e:proof7}, we see that
\begin{equation} \label{e:proof8}
d_{BL}\left(\delta_{x_0} P^{(n)},\pi\right)\leq c_2 r_2^n +2c_1 r_1^n\quad\text{for every}\quad n\geq n_0.
\end{equation}
Thus, by combining \eqref{e:proof6} and \eqref{e:proof8}, we can conclude that, for any $x\in X$ and $n\geq n_0$,
\begin{align*}
d_{BL}\left(\delta_x P^{(n)},\pi\right)&\leq  d_{BL}\left(\delta_x P^{(n)},\delta_{x_0} P^{(n)}\right)+d_{BL}\left(\delta_{x_0} P^{(n)},\pi\right)\\
&\leq r_2^n (V(x)+c_2) +4c_1r_1^n\leq \max\{r_1,r_2\}^n(V(x)+c_2+4c_1).
\end{align*}
Taking $q:=\max\{r_1,r_2\}$ and $C_0:=1+c_2+4c_1$ therefore gives
$$
d_{BL}\left(\delta_x P^{(n)},\pi\right)\leq C_0 q^n(V(x)+1)\quad\text{for all}\quad x\in X,\;n\geq n_0.
$$
However, since $d_{BL}(\mu_1,\mu_2)\leq 2$ for any $\mu_1,\mu_2\in\mathcal{M}_1(X)$, we can replace $C_0$ with the constant \hbox{$C:=\max\{2,C_0\}/q^{n_0}$} to obtain
$$
d_{BL}\left(\delta_x P^{(n)},\pi\right)\leq C q^n(V(x)+1)\quad\text{for all}\quad x\in X,\;n\in\n.
$$

Finally, in order to deduce \eqref{e:main}, it suffices to observe that
$$d_{BL}\left(\mu P^{(n)},\, \pi\right)\leq \int_X d_{BL}\left(\delta_x P^{(n)},\, \pi\right)\mu(dx)\quad\text{for all}\quad \mu\in\mathcal{M}_1(X),\; n\in\n,$$
which follows directly from \eqref{e:attracting_dirac}. Clearly, due to Remark \ref{rem:3}, $\pi$ is then also attracting in the sense of \eqref{def:attracting}. The proof is therefore complete.
\end{proof}

\begin{proof}[Proof of Theorem \ref{thm:main2}]
Define 
$$K_n(\theta):=
\begin{cases}
\Delta &\text{for}\quad \theta\in\Theta_n,\; n\in\{1,\ldots,n_0-1\},\\
\rho\left(x_0,\,S_{\theta}^{(n)}(x_0)\right) &\text{for}\quad \theta\in\Theta_n,\;n\geq n_0.
\end{cases}
$$
Proceeding analogously to the proof of Theorem \ref{thm:main}, under hypotheses \ref{cnd:A1}, \ref{cnd:A2}, and~\ref{cnd:B3}, one can construct a~sequence of $\mathcal{F}$-measurable sets $\{C_n\}_{n\in \n}$ so that \eqref{e:proof3} is satisfied for some $c_1>0$ and $r_1\in(0,1)$, and moreover, for any $n\geq n_0$ and $(\theta_1,\theta_2,\ldots)\in\Omega\backslash C_n$, the following properties hold with certain $\gamma>0$ and $s\in(1,e^\gamma)$:
\begin{equation}
\begin{gathered}\label{e:proof9}
S_{\theta_k}^{(k)}\;\,\text{is}\;\,L_k(\theta_k)\text{-Lipschitz continuous}\;\;\text{for every}\;\; k\in\n,\\
\quad\prod_{j=1}^k L_j(\theta_j)\leq e^{-k\gamma},\quad\text{and}\quad K_k(\theta_k)\leq s^k\quad\text{for all}\quad k\geq  n.
\end{gathered}
\end{equation}
The procedure differs only in the construction of the \hbox{$\mathbb{P}$-null} set~$N$ and in the definition of the random variables~$\kappa_k$ used to define $B_k$, $k\in\n$. Specifically, $N\in\mathcal{F}$ needs to be constructed so that $S_{\theta_k}^{(k)}$ is $L_k(\theta_k)$-Lipschitz continuous for every $(\theta_1,\theta_2,\ldots)\in \Omega\backslash N$, while $\kappa_k$ must be given by $\kappa_k:=K_k(\eta_k)$ for $k\in\n$. Then $\pr(B_k)\leq \Delta s^{-k}$ for all $k\in \n$, since $\sup_{k\in\n} \ew \kappa_k=\Delta$ due to condition \ref{cnd:B3}.

Now fix arbitrary $x\in X$, $n\geq n_0$, $m\in\n$, and $\omega=(\theta_1,\theta_2,\ldots)\in \Omega\backslash C_n$. Referring to~\eqref{e:proof9} and the first assertion of Lemma \ref{lem:composition}, we conclude, similarly as in the proof of the previous theorem, that
$$\rho\left(\Psi_n^{x}(\omega), \Psi_{n}^{x_0}(\omega)\right)=\rho\left(\bmathcal{S}_{\theta_1,\ldots,\theta_n}(x),\, \bmathcal{S}_{\theta_1,\ldots,\theta_n}(x_0)\right)
\leq \left(\prod_{j=1}^n L_j(\theta_j)\right) \rho(x,x_0)\leq r_2^n V(x),
$$
and that
\begin{align*}
\rho\left(\Psi_n^{x_0}(\omega), \Psi_{n+m}^{x_0}(\omega)\right)&=
\rho\left(\bmathcal{S}_{\theta_1,\ldots,\theta_n}(x_0),\, \bmathcal{S}_{\theta_1,\ldots,\theta_{n+m}}(x_0)\right)\leq \sum_{k=n+1}^{n+m} \rho\left(\bmathcal{S}_{\theta_1,\ldots,\theta_{k-1}}(x_0),\, \bmathcal{S}_{\theta_1,\ldots,\theta_k}(x_0)\right)\\
&\leq \sum_{k=n+1}^{n+m}\rho\left(x_0,\,S_{\theta_k}^{(k)}(x_0)\right) \prod_{j=1}^{k-1}L_j(\theta_j)  \overset{k\geqslant n_0}{=}\sum_{k=n+1}^{n+m} K_k(\theta_k)\prod_{j=1}^{k-1}L_j(\theta_j)\\
&\leq \sum_{k=n+1}^{\infty} s^k e^{-(k-1)\gamma}=c_2r_2^n,
\end{align*}
where $c_2>0$ and $r_2\in (0,1)$ are defined as in \eqref{e:rc}. This yields
$$
\max\left\{ \pr\big(\rho\left(\Psi_n^{x}, \Psi_n^{x_0}\big)> r_2^n V(x) \right),\; \pr\left(\rho\left(\Psi_n^{x_0}, \Psi_{n+m}^{x_0}\right)> c_2r_2^n \right)  \right\}\leq \pr(C_n)\leq c_1 r_1^n.
$$

Consequently, an application of Lemma \ref{lem:rand_var_dist}, yields
$$
d_{BL}\left(\delta_{x} \bar{P}^{(n)},\; \delta_{x_0} \bar{P}^{(n)} \right)\leq r_2^n V(x)+2c_1r_1^n\quad\text{for all} \quad x\in X,\;  n\geq n_0
$$
and
$$
d_{BL}\left(\delta_{x_0} \bar{P}^{(n)},\; \delta_{x_0} \bar{P}^{(n+m)} \right)\leq c_2r_2^n +2c_1r_1^n\quad\text{for all} \quad n\geq n_0,\;m\in\n.
$$
The remainder of the argument is identical to the proof of Theorem \ref{thm:main}, with $P^{(n)}$ replaced by $\bar{P}^{(n)}$.

\end{proof}

\appendix
\setcounter{secnumdepth}{1}
\renewcommand{\theequation}{A.\arabic{equation}}
\renewcommand{\thesection}{A}

\section*{Appendix}\label{sec:appendix}
For the self-containedness of the paper and reader's convenience, we provide here a~detailed proof of Lemma \ref{lem:diaconis}, closely following the original argument from \cite{b:DiaconisFreedman1999}. Before proceeding, we first establish two technical facts that are essential in this proof.

In what follows, when considering $e^{\lambda\xi}$ with $\lambda > 0$ and a~random variable $\xi$ taking values in $[-\infty,\infty)$, we adopt the convention $e^{-\infty}:=0$.

\begin{lemma}\label{lem:diac1}
Let $\xi$ be a~random variable with values in $[-\infty,\infty)$ that satisfies \eqref{e:czeb_eq} for certain $\alpha\geq 0$ and $\beta>0$. Then, for every $\lambda\in (0,\beta)$,
\begin{enumerate}[label=\textnormal{(\roman*)}, leftmargin=*]
\item\label{cnd:d1} $\ew\, e^{\lambda \xi}<\infty$ and, in particular, $\ew\xi$ exists with $-\infty\leq \ew \xi<\infty$;
\item\label{cnd:d2} if $\xi$ is a.s. bounded from below, then $\ew e^{\lambda |\xi|}<\infty$, and, in particular, $\ew|\xi|<\infty$.
\end{enumerate}
\end{lemma}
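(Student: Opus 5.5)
The plan is to bound the expectations via the layer-cake (tail integral) formula, using only the exponential tail bound \eqref{e:czeb_eq}. Fix $\lambda\in(0,\beta)$. Since $e^{\lambda\xi}$ is a nonnegative random variable (with $e^{-\infty}=0$), we have
\begin{equation}
\ew\, e^{\lambda\xi}=\int_0^\infty \pr\left(e^{\lambda\xi}>u\right)du\leq 1+\int_1^\infty \pr\left(\xi>\tfrac{\ln u}{\lambda}\right)du .
\end{equation}
Applying \eqref{e:czeb_eq} with $t=\ln u/\lambda>0$ for $u>1$ gives the integrand bound $\alpha u^{-\beta/\lambda}$. Because $\beta/\lambda>1$, this is integrable on $(1,\infty)$, and we get $\ew\, e^{\lambda\xi}\leq 1+\alpha\lambda/(\beta-\lambda)<\infty$.

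To deduce that $\ew\xi$ exists and is $<\infty$, it suffices to show $\ew\,\xi^+<\infty$; then $\ew\xi=\ew\xi^+-\ew\xi^-$ is well defined in $[-\infty,\infty)$, with $\xi^-$ possibly infinite on $\{\xi=-\infty\}$. This follows from the elementary inequality $\lambda t\le e^{\lambda t}$ for $t\ge 0$, which yields $\xi^+\leq \lambda^{-1}e^{\lambda\xi^+}\leq\lambda^{-1}(1+e^{\lambda\xi})$. Alternatively, $\ew\,\xi^+=\int_0^\infty\pr(\xi>t)\,dt\le\alpha/\beta$ directly. Either way part \ref{cnd:d1} follows.

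For part \ref{cnd:d2}, suppose $\xi\geq -K$ a.s. for some constant $K\ge0$. Then $|\xi|\leq \xi^+ + K$, because on $\{\xi<0\}$ we have $|\xi|=-\xi\le K$. Hence $e^{\lambda|\xi|}\leq e^{\lambda K}e^{\lambda\xi^+}\le e^{\lambda K}(1+e^{\lambda\xi})$, and its expectation is finite by \ref{cnd:d1}. Integrability of $|\xi|$ then follows from $|\xi|\le\lambda^{-1}e^{\lambda|\xi|}$.

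The only mildly delicate points are bookkeeping ones: treating the value $-\infty$ correctly (it contributes $0$ to $e^{\lambda\xi}$ and does not affect the tail probabilities), and checking that the substitution in the tail integral uses \eqref{e:czeb_eq} only for $t>0$. The tail-integral estimate is the main step. I do not expect a genuine obstacle.
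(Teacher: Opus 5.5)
Your proof is correct and takes essentially the same route as the paper: both bound $\ew\, e^{\lambda\xi}$ directly from the exponential tail \eqref{e:czeb_eq}, then get $\ew\,\xi^+<\infty$ from $x\le e^{x}$, and handle part (ii) by splitting off the bounded negative part $\xi\ge -K$. The only difference is that the paper slices $\{\xi>0\}$ into the events $\{n<\xi\le n+1\}$ and sums a geometric series, whereas you use the continuous tail-integral formula, which yields the explicit bound $1+\alpha\lambda/(\beta-\lambda)$ (and, via your alternative, $\ew\,\xi^+\le\alpha/\beta$).
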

\begin{proof}
Let $\lambda\in (0,\beta)$. It is clear that $\ew\left[e^{\lambda\xi}\mathbbm{1}_{\{\xi\leq 0\}} \right]\leq \pr(\xi\leq 0)\leq 1$.
Using condition \eqref{e:czeb_eq} we can, in turn, conclude that
\begin{align}
\label{e:lem_d1}
\begin{split}
\ew\left[e^{\lambda\xi}\mathbbm{1}_{\{\xi>0\}} \right]
&=\sum_{n=0}^{\infty} \left[e^{\lambda\xi}\mathbbm{1}_{\{n<\xi\leq n+1\}} \right]\leq \sum_{n=0}^{\infty} e^{\lambda(n+1)}\pr(\xi>n)\\
&=e^{\lambda}\pr(\xi>0)+\sum_{n=1}^{\infty} e^{\lambda (n+1)}\pr(\xi>n)\leq e^{\lambda}\left(1+\alpha \sum_{n=1}^{\infty} e^{(\lambda-\beta)n}\right)<\infty.
\end{split}
\end{align}
Hence
$$\ew \,e^{\lambda\xi}=\ew\left[e^{\lambda\xi}\mathbbm{1}_{\{\xi\leq 0\}} \right]+\ew\left[e^{\lambda\xi}\mathbbm{1}_{\{\xi>0\}} \right]<\infty.$$
Now, writing $\xi=\xi^+-\xi^-$ (where $\xi^+:=\max\{\xi,0\}$, $\xi^{-}=\max\{-\xi,0\}$) and applying the fact that
\begin{equation}\label{e:known}
x+1\leq e^x \quad\text{for}\quad x\in\mathbb{R},
\end{equation}
we also get
$$\ew\xi^+=\ew\left[\xi\mathbbm{1}_{\{\xi>0\}} \right]=\frac{1}{\lambda} \ew\left[\lambda\xi\mathbbm{1}_{\{\xi>0\}} \right]\leq \frac{1}{\lambda}\ew\left[e^{\lambda\xi}\mathbbm{1}_{\{\xi>0\}} \right]<\infty,$$
which shows that $\ew \xi=\ew\xi^+-\ew\xi^-$ exists and that $\ew \xi<\infty$. We have therefore shown that \ref{cnd:d1} holds. To establish \ref{cnd:d2}, it suffices to observe that, if $\xi\geq -M$ a.s. for some $M>0$, then
$$\ew\,e^{\lambda|\xi|}=\ew\left[e^{-\lambda\xi}\mathbbm{1}_{\{\xi\leq 0\}}\right]+\ew\left[e^{\lambda \xi }\mathbbm{1}_{\{\xi> 0\}}\right]\leq e^{\lambda M}+\ew\left[e^{\lambda \xi }\mathbbm{1}_{\{\xi> 0\}}\right]<\infty,$$
since the second term on the right-hand side is finite due to \eqref{e:lem_d1}. Thus, in particular, it follows that
$$\ew|\xi|=\frac{1}{\lambda}\ew\lambda|\xi|\leq \frac{1}{\lambda}\ew\, e^{\lambda|\xi|}<\infty,$$
which completes the proof.
\end{proof} 

\begin{lemma}\label{lem:diac2}
Let $\{\xi_n\}_{n\in\n}$ be a~sequence of mutually independent and identically distributed random variables with values in $[-\infty, \infty)$, such that $\ew \xi_1$ exists and $-\infty \leq \ew \xi_1 < c$ for some $c \in \mathbb{R}$. Then there exists a~sequence $\{\bar{\xi}_n\}_{n \in \n}$ of mutually independent and identically distributed random variables, also taking values in $[-\infty, \infty)$, which is a.s. bounded from below (i.e., $\bar{\xi}_n\geq- K$ a.s. for all $n\in\n$ with some $K>0$) and satisfies
\begin{equation}\label{e:diac2_assert}
\ew\bar{\xi}_1 < c,\quad \xi_n \leq \bar{\xi}_n\;\;\text{a.s.}\quad \text{for all} \quad n \in \n.
\end{equation}
\end{lemma}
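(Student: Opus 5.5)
The natural construction is truncation from below: set $\bar{\xi}_n:=\max\{\xi_n,-K\}$ for a suitably large constant $K>0$, with the convention $\max\{-\infty,-K\}=-K$. These variables are mutually independent and identically distributed, because each is the same Borel function of $\xi_n$. They take values in $[-K,\infty)\subset[-\infty,\infty)$ and satisfy $\xi_n\leq\bar{\xi}_n$ everywhere. So the second part of \eqref{e:diac2_assert} and the lower bound hold automatically, and the only real task is to choose $K$ so that $\ew\bar{\xi}_1<c$.

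Since $\ew\xi_1$ exists, we have $\ew\xi_1^+<\infty$ or $\ew\xi_1^-<\infty$. The hypothesis $\ew\xi_1<c<\infty$ excludes the case $\ew\xi_1^+=\infty$, so $\ew\xi_1^+<\infty$. Write
\[
\bar{\xi}_1^{(K)}:=\max\{\xi_1,-K\}=\xi_1^+-\min\{\xi_1^-,K\}.
\]
Then $\ew\bar{\xi}_1^{(K)}=\ew\xi_1^+-\ew\min\{\xi_1^-,K\}$ is well defined and finite for every $K$. By the monotone convergence theorem, $\ew\min\{\xi_1^-,K\}\uparrow\ew\xi_1^-\in[0,\infty]$ as $K\to\infty$. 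Consequently $\ew\bar{\xi}_1^{(K)}\downarrow\ew\xi_1^+-\ew\xi_1^-=\ew\xi_1$, where the limit is understood as $-\infty$ when $\ew\xi_1^-=\infty$.

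The argument then splits into two cases. If $\ew\xi_1=-\infty$, the decreasing sequence $\ew\bar{\xi}_1^{(K)}$ tends to $-\infty$, so it lies below $c$ for all large $K$. If $\ew\xi_1\in\mathbb{R}$, the sequence converges to $\ew\xi_1<c$, so again $\ew\bar{\xi}_1^{(K)}<c$ for large $K$. In either case we fix such a $K$ and define $\bar{\xi}_n:=\max\{\xi_n,-K\}$ for all $n$ with this single $K$; the identical distribution of the $\xi_n$ gives $\ew\bar{\xi}_n=\ew\bar{\xi}_1<c$.

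The only delicate point is the limit passage when $\xi_1$ takes the value $-\infty$ with positive probability or $\ew\xi_1^-=\infty$. Monotone convergence applied to the nonnegative functions $\min\{\xi_1^-,K\}$, with $\xi_1^-=+\infty$ on $\{\xi_1=-\infty\}$, covers both situations without extra work.
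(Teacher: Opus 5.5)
Your proposal is correct and is essentially the paper's proof: truncate from below via $\max\{\xi_n,-K\}=\xi_n^+-\min\{\xi_n^-,K\}$ and use monotone convergence together with $\ew\xi_1^+<\infty$ to pick $K$ with $\ew\max\{\xi_1,-K\}<c$. The paper only asserts that $\ew\xi_1^+$ is finite, whereas you justify it from the hypothesis $\ew\xi_1<c$ and treat the case $\ew\xi_1=-\infty$ explicitly.
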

\begin{proof}
It is easy to check that 
$$\max\{\xi_1, -k \}=\xi^+-\min\{\xi_1^{-},k\}\quad\text{for each}\quad k\in\n.$$
Since $\min\{\xi_1^{-},k\} \uparrow \xi_1^{-}$ as $k\to\infty$, the Lebesgue monotone convergence theorem yields $\lim_{k\to\infty} \ew\left[\min\{\xi_1^{-},k\}\right]=\ew \xi_1^-$. Consequently, in view of the finiteness of $\ew \xi_1^+$, it follows that
$$\lim_{k\to\infty} \ew\left[\max\{\xi_1, -k \}\right]=\ew\xi^+-\ew\xi^-=\ew \xi<c.$$
We can therefore choose $K\in\n$ sufficiently large that $\ew\left[\max\{\xi_1, -K \}\right]<c$. Then the sequence $\{\bar{\xi}_n\}_{n \in \n}$ of random variables defined by
$$\bar{\xi}_n:=\max\{\xi_n, -K \}\quad\text{for}\quad n\in\n$$
fulfills the desired conditions.
\end{proof}

\begin{proof}[Proof of Lemma \ref{lem:diaconis}]
First of all, statement \ref{cnd:d1} of Lemma \ref{lem:diac1} guarantees that $\ew{e^{\lambda \xi_1}}<\infty$ for every $\lambda\in (0,\beta)$, and that $\ew\xi_1$ exists with $\ew\xi_1<\infty$. 

Fix an arbitrary $c>\ew\xi_1$. In view of Lemma \ref{lem:diac2}, we may assume, without loss of generality, that $\{\xi_n\}_{n\in\n}$ is a.s. bounded from below. Specifically, if this is not the case, we can replace $\{\xi_n\}_{n\in\n}$ with $\{\bar{\xi}_n\}_{n \in \n}$ that satisfies the conclusion of that lemma with the given~$c$. Then, owing to \eqref{e:diac2_assert}, whenever \eqref{e:diac_aasert} holds for $\{\bar{\xi}_n\}_{n\in\n}$, it also holds for $\{\xi_n\}_{n\in\n}$.

Given the assumption made above, it follows from statement \ref{cnd:d2} of Lemma \ref{lem:diac1} that $\ew\,e^{\lambda|\xi_1|}<\infty$ for every $\lambda\in (0,\beta)$, and $m:=\ew \xi_1\in\mathbb{R}$. For the sake of the subsequent analysis, let us fix any~$\gamma\in (0,\beta)$.

We will first prove that there exists $d>0$ such that
\begin{equation}\label{e:diac4}
\ew e^{\lambda\xi_1}\leq e^{\lambda m+\lambda^2 d}\quad\text{for any}\quad \lambda \in (0,\gamma].
\end{equation}
To this end, put $\xi:=\xi_1$ and define
$$d:=\frac{1}{\gamma^2}\left(\ew\, e^{\gamma|\xi|}-\gamma\ew|\xi|\right),$$
which is finite and, by \eqref{e:known}, positive ($\geq 1/\gamma^2$). Further, let $\lambda \in (0,\gamma]$ and note that
\begin{align*}
\frac{\gamma^2}{\lambda^2}\left|e^{\lambda\xi}-\lambda\xi-1\right|
&=\frac{\gamma^2}{\lambda^2}\left|\sum_{n=2}^{\infty}\frac{\lambda^n\xi^n }{n!} \right|
\leq \sum_{n=2}^{\infty} \frac{\gamma^2\lambda^{n-2}|\xi|^n}{n!}\leq \sum_{n=2}^{\infty} \frac{(\gamma|\xi|)^n}{n!}\\
&=\sum_{n=0}^{\infty} \frac{(\gamma|\xi|)^n}{n!} -\gamma|\xi|-1=e^{\gamma|\xi|}-\gamma|\xi|-1.
\end{align*}
Thus
$$\left|e^{\lambda\xi}-\lambda\xi-1\right|\leq \frac{\lambda^2}{\gamma^2}\left(e^{\gamma|\xi|}-\gamma|\xi| -1\right).$$
Consequently, we infer that
\begin{align*}
\ew\, e^{\lambda\xi}-\lambda m-1&\leq \ew\left|e^{\lambda\xi}-\lambda\xi-1 \right|\leq \frac{\lambda^2}{\gamma^2}\left(\ew\,e^{\gamma|\xi|}-\gamma\ew |\xi| -1\right)<\lambda^2 d,
\end{align*}
which gives
\begin{equation}\label{e:diac5}
\ew\, e^{\lambda\xi}\leq \lambda m+\lambda^2 d+1.
\end{equation}
Condition \eqref{e:diac4} follows now immediately from \eqref{e:diac5} and \eqref{e:known}. 

Let us now define
$$r(t):=e^{-(t-m)^2/(4d)}\quad\text{for}\quad t\in\mathbb{R},\quad\text{and}\quad c_0:=m+2d\gamma.$$
Clearly $r(t)\in (0,1)$ for every $t\neq m$, whence $r(c), r(c_0)\in (0,1)$ (since \hbox{$c,c_0>m$}).
To complete the proof, we will show that, for every $n\in\n$,
$$\pr\left(\sum_{j=1}^n \xi_j>nc\right)\leq r(c)^n\quad\text{if}\quad c\leq c_0,\quad\text{and}\quad \pr\left(\sum_{j=1}^n\xi_j>nc \right)\leq r(c_o)^n\quad\text{otherwise}.$$
For this aim, fix an arbitrary $n\in\n$ and put $\eta:=\sum_{j=1}^n\xi_j$. It suffices to consider the case where $c\leq c_0$, as otherwise $\pr(\eta>nc)\leq \pr(\eta>nc_0)$. Thus, assume that $c\leq c_0$, and let $\lambda:=(c-m)/(2d)$. Then $\lambda\in (0,\beta)$, since $0<c-m\leq c_0-m=2d\gamma<2d\beta$, and we have
\begin{equation}\label{e:diac6}
\lambda m+ \lambda^2d -\lambda c=-\lambda(c-m)+\lambda^2d = -\frac{(c-m)^2}{2d}+\frac{(c-m)^2}{4d}=-\frac{(c-m)^2}{4d}.
\end{equation}
Moreover, the mutual independence of $\xi_1,\xi_2,\ldots$, together with the fact that $\ew e^{\lambda\xi_1}<\infty$, yields
\begin{equation}\label{e:diac7}
\ew e^{\lambda\eta}=\prod_{j=1}^n \ew e^{\lambda\xi_j}=\left(\ew e^{\lambda\xi_1}\right)^n.
\end{equation} 
Finally, by applying the Chebyshev inequality, \eqref{e:diac7}, \eqref{e:diac4}, and then \eqref{e:diac6}, we get
\begin{align*}
\pr(\eta>nc)&=\pr(e^{\lambda\eta}>e^{\lambda nc})\leq e^{-\lambda n c}\,\ew e^{\lambda\eta}=\left(e^{-\lambda c} \ew e^{\lambda\xi_1}\right)^n\leq \left(e^{\lambda m+\lambda^2 d-\lambda c}\right)^n=r(c)^n,
\end{align*}
which establishes the desired claim.
\end{proof}

\section*{Acknowledgements}
The research of R.K. was supported by the Faculty of Applied Mathematics AGH UST statutory tasks within subsidy of the Polish Ministry of Science and Higher Education.

\section*{Funding}
This research did not receive any specific grant from funding agencies in the public, commercial, or not-for-profit sectors.

\section*{Data availability}
No data was used for the research described in the article.

\bibliographystyle{abbrv}
\small
\bibliography{ReferencesDatabase}

\begin{thebibliography}{10}

\bibitem{b:AlsmeyerIksanovRosler2009}
G.~Alsmeyer, A.~Iksanov, and U.~Rösler.
\newblock On distributional properties of perpetuities.
\newblock {\em Journal of Theoretical Probability}, 22(3):666--682, 2009.

\bibitem{b:BarnsleyDemko1985}
M.~F. Barnsley and S.~Demko.
\newblock Iterated function systems and the global construction of fractals.
\newblock {\em Proceedings of the Royal Society of London. Series A,
  Mathematical and Physical Sciences}, 399(1817):243--275, 1985.

\bibitem{b:BarnsleyDemkoEltonGeronimo1988}
M.~F. Barnsley, S.~G. Demko, J.~H. Elton, and J.~S. Geronimo.
\newblock Invariant measures for {Markov} processes arising from iterated
  function systems with place-dependent probabilities.
\newblock {\em Annales de l'Institut Henri Poincar{\'e}. Probabilit{\'e}s et
  Statistiques}, 24(3):367--394, 1988.
\newblock Erratum: volume 25 (1989), pages 589--590.

\bibitem{b:BuraczewskiDamekMikosch2016}
D.~Buraczewski, E.~Damek, and T.~Mikosch.
\newblock {\em Stochastic Models with Power-Law Tails: The Equation {$X = AX +
  B$}}.
\newblock Springer Series in Operations Research and Financial Engineering.
  Springer International Publishing, Cham, 1 edition, 2016.
\newblock XV + 320 pages.

\bibitem{b:DiaconisFreedman1999}
P.~Diaconis and D.~Freedman.
\newblock Iterated random functions.
\newblock {\em SIAM Review}, 41(1):45--76, 1999.

\bibitem{b:DubinsFreedman1966}
L.~E. Dubins and D.~A. Freedman.
\newblock Invariant probabilities for certain {Markov} processes.
\newblock {\em The Annals of Mathematical Statistics}, 37(4):837--848, 1966.

\bibitem{b:Dudley1966}
R.~M. Dudley.
\newblock Convergence of {Baire} measures.
\newblock {\em Studia Mathematica}, 27(3):251--268, 1966.
\newblock Correction published in Studia Mathematica 51 (1974), no. 3, p. 275.

\bibitem{b:EmbrechtsKluppelbergMikosch1997}
P.~Embrechts, C.~Klüppelberg, and T.~Mikosch.
\newblock {\em Modelling Extremal Events: For Insurance and Finance}, volume~33
  of {\em Stochastic Modelling and Applied Probability}.
\newblock Springer-Verlag, Berlin and Heidelberg, 1 edition, 1997.

\bibitem{b:FortetMourier1953}
R.~Fortet and {\'E}.~Mourier.
\newblock Convergence de la r\'epartition empirique vers la r\'epartition
  th\'eorique.
\newblock {\em Annales scientifiques de l'\'Ecole Normale Sup\'erieure},
  70(3):267--285, 1953.

\bibitem{b:GoldieMaller2000}
C.~M. Goldie and R.~A. Maller.
\newblock Stability of perpetuities.
\newblock {\em The Annals of Probability}, 28(3):1195--1218, 2000.

\bibitem{b:Hutchinson1981}
J.~E. Hutchinson.
\newblock Fractals and self-similarity.
\newblock {\em Indiana University Mathematics Journal}, 30(5):713--747, 1981.

\bibitem{b:Iksanov2016}
A.~Iksanov.
\newblock {\em Renewal Theory for Perturbed Random Walks and Similar
  Processes}.
\newblock Probability and Its Applications. Birkh{\"a}user, Cham, 1 edition,
  2016.
\newblock XIV + 250 pages.

\bibitem{b:KapicaSleczka2020}
R.~Kapica and M.~\'Sl\k{e}czka.
\newblock Random iteration with place dependent probabilities.
\newblock {\em Probability and Mathematical Statistics}, 40(1):119--137, 2020.

\bibitem{b:Karlin1953}
S.~Karlin.
\newblock Some random walks arising in learning models. {I}.
\newblock {\em Pacific Journal of Mathematics}, 3(4):725--756, 1953.

\bibitem{b:Lasota1995}
A.~Lasota.
\newblock From fractals to stochastic differential equations.
\newblock In P.~Garbaczewski, M.~Wolf, and A.~Weron, editors, {\em Chaos---The
  Interplay Between Stochastic and Deterministic Behaviour}, volume 457 of {\em
  Lecture Notes in Physics}, pages 235--255. Springer-Verlag, Berlin and
  Heidelberg, 1995.
\newblock Proceedings of the XXXI Winter School of Theoretical Physics,
  Karpacz, Poland, 1995.

\bibitem{b:LasotaMackey1999}
A.~Lasota and M.~C. Mackey.
\newblock Cell division and the stability of cellular populations.
\newblock {\em Journal of Mathematical Biology}, 38(3):241--261, 1999.

\bibitem{b:Wazewska}
A.~Lasota, M.~C. Mackey, and M.~Wa\.zewska-Czy\.zewska.
\newblock Minimizing therapeutically induced anemia.
\newblock {\em Journal of Mathematical Biology}, 13(2):149--158, 1981.

\bibitem{b:LasotaMyjak1998}
A.~Lasota and J.~Myjak.
\newblock Semifractals on {Polish} spaces.
\newblock {\em Bulletin of the {Polish} Academy of Sciences. Mathematics},
  46(2):179--196, 1998.

\bibitem{b:LasotaYorke1994}
A.~Lasota and J.~A. Yorke.
\newblock Lower bound technique for {Markov} operators and iterated function
  systems.
\newblock {\em Random \& Computational Dynamics}, 2(1):41--77, 1994.

\bibitem{b:Mendivil2015}
F.~Mendivil.
\newblock Time-dependent iteration of random functions.
\newblock {\em Chaos, Solitons \& Fractals}, 75:178--184, 2015.

\bibitem{b:MihocOnicescu1935}
G.~Mihoc and O.~Onicescu.
\newblock Sur les cha{\^\i}nes de variables statistiques.
\newblock {\em Bulletin des Sciences Math{\'e}matiques}, 59:174--192, 1935.

\bibitem{b:Norman1968}
M.~F. Norman.
\newblock Some convergence theorems for stochastic learning models with
  distance diminishing operators.
\newblock {\em Journal of Mathematical Psychology}, 5(1):61--101, 1968.

\bibitem{b:Stenflo1998}
{\"O}.~Stenflo.
\newblock Ergodic theorems for time-dependent random iteration of functions.
\newblock In M.~M. Novak, editor, {\em Fractals and Beyond: Complexities in the
  Sciences}, pages 129--136. World Scientific, Singapore, 1998.
\newblock Proceedings of the International Multidisciplinary Conference on
  Fractals, Valletta, Malta.

\bibitem{b:Stenflo2001}
{\"O}.~Stenflo.
\newblock Ergodic theorems for {Markov} chains represented by iterated function
  systems.
\newblock {\em Bulletin of the {Polish} Academy of Sciences: Mathematics},
  49(1):27--43, 2001.

\bibitem{b:Stenflo2002}
{\"O}.~Stenflo.
\newblock Uniqueness of invariant measures for place-dependent random
  iterations of functions.
\newblock In M.~F. Barnsley, D.~Saupe, and E.~R. Vrscay, editors, {\em Fractals
  in Multimedia}, volume 132 of {\em The IMA Volumes in Mathematics and its
  Applications}, pages 13--32. Springer-Verlag, New York, 2002.
\newblock Based on the IMA Annual Program on Mathematics in Multimedia,
  Minneapolis, Minnesota, 2001.

\bibitem{b:Szarek2003IM}
T.~Szarek.
\newblock Invariant measures for {Markov} operators with application to
  function systems.
\newblock {\em Studia Mathematica}, 154(3):207--222, 2003.

\bibitem{b:Szarek2003}
T.~Szarek.
\newblock Invariant measures for nonexpansive {Markov} operators on {Polish}
  spaces.
\newblock {\em Dissertationes Mathematicae}, 415:1--62, 2003.

\bibitem{b:Vervaat1979}
W.~Vervaat.
\newblock On a stochastic difference equation and a representation of
  non-negative infinitely divisible random variables.
\newblock {\em Advances in Applied Probability}, 11(4):750--783, 1979.

\bibitem{b:Werner2005}
I.~Werner.
\newblock Contractive {Markov} systems.
\newblock {\em Journal of the London Mathematical Society}, 71(1):236--258,
  2005.

\bibitem{b:Wojewodka2013}
H.~Wojew{\'o}dka.
\newblock Exponential rate of convergence for some {Markov} operators.
\newblock {\em Statistics \& Probability Letters}, 83(10):2337--2347, 2013.

\end{thebibliography}

\end{document}